% Final version of joint work Bahuaud-Gicquaud

%\documentclass[a4paper,english]{amsart}
%\documentclass{article}
%\documentclass[a4paper,11pt]{article}

% Uncomment these lines for European recipients (A4 paper)
%\documentclass[a4paper,11pt,leqno]{amsart}
%\usepackage[margin=2 cm,bindingoffset=0cm,nofoot]{geometry} 

% Uncomment this line for North American recipients (letter paper)
\documentclass[letter,11pt,leqno]{amsart}
\usepackage[margin=1 in,bindingoffset=0cm,nofoot]{geometry}

\usepackage{amsmath, amssymb, amsthm, stmaryrd}
\usepackage{euscript}
\usepackage{verbatim}
\usepackage{graphicx}
\numberwithin{equation}{section}

% Tenseur métrique

% Tenseur de Riemann global
\newcommand{\riem}{\mathcal{R}}
\newcommand{\riemdddd}[4]{\riem_{#1 #2 #3 #4}}
\newcommand{\riemuddd}[4]{\riem^{#1}_{\phantom{#1} #2 #3 #4}}

\newcommand{\riemdudd}[4]{\riem^{\phantom{#1} #2}_{#1 \phantom{#2} #3 #4}}
\newcommand{\riemddud}[4]{\riem^{\phantom{#1 #2} #3}_{#1 #2 \phantom{#3} #4}}
\newcommand{\riemdddu}[4]{\riem^{\phantom{#1 #2 #3} #4}_{#1 #2 #3}}
\newcommand{\riemudud}[4]{\riem^{#1 \phantom{#2} #3}_{\phantom{#1} #2 \phantom{#3} #4}}

\newcommand{\riemcons}{\mathcal{K}}

% Tenseur de Riemann global compactifiée

%\newcommand{\riem}{Rm}

% Tenseur de Riemann tangentiel
\newcommand{\triem}{{\mathcal{R}^\Sigma}}
\newcommand{\triemdddd}[4]{\triem_{#1 #2 #3 #4}}

% Tenseur de Weyl global
\newcommand{\weyl}{\mathcal{W}}
\newcommand{\weyldddd}[4]{\weyl_{#1 #2 #3 #4}}

\newcommand{\weyludud}[4]{\weyl^{#1 \phantom{#2} #3}_{\phantom{#1} #2 \phantom{#3} #4}}

% Tenseur de Weyl tangentiel

% Tenseur de Ricci
\newcommand{\ric}{\mathcal{R}\mathrm{ic}}
\newcommand{\ricdd}[2]{\ric_{#1 #2}}
\newcommand{\ricud}[2]{\ric^{#1}_{\phantom{#1} #2}}
\newcommand{\ricdu}[2]{\ric^{\phantom{#1} #2}_{#1}}

% Tenseur de Ricci tangentiel

%\newcommand{\tricbardd}[2]{\tricbar_{\phantom{\Sigma}#1 #2}}

%\newcommand{\tricbarldd}[3]{\tricbarl{#1}_{\phantom{\Sigma}#2 #3}}

% Tenseurs quelconques
\newcommand{\tdddd}[5]{\mathcal{#1}_{#2 #3 #4 #5}}

% Seconde forme fondamentale
\newcommand{\sff}{S}
\newcommand{\sffdd}[2]{S_{#1 #2}}
\newcommand{\sffud}[2]{S^{#1}_{\phantom{#1} #2}}

% Connexion de Levi-Civita

\newcommand{\tgrad}[1]{\nabla^{\Sigma}_{#1}}

% Scalaire de courbure

% Seconde forme fondamentale renormalisee

% Seconde forme fondamentale compactifiée

%Symbole de Kronecker
\newcommand{\kronecker}[2]{\delta^{#1}_{\phantom{#1}#2}}

\def\rq{\noindent\textbf{Remark: }}

\newtheorem{maintheorem}{Theorem}

\newtheorem{theorem}{Theorem}[section]
\newtheorem{prop}[theorem]{Proposition}

\newtheorem{lemma}[theorem]{Lemma}

\newcommand{\Mbar}{\overline{M}} 
\newcommand{\Minf}{M(\infty)} 
\newcommand{\bR}{\mathbb{R}}
\newcommand{\bB}{\mathbb{B}}
\newcommand{\bS}{\mathbb{S}}

\newcommand{\bN}{\mathbb{N}}
\newcommand{\bH}{\mathbb{H}}
\newcommand{\cA}{\mathcal{A}}

\newcommand{\gbar}{\overline{g}}

\renewcommand{\hbar}{\overline{h}}
\newcommand{\Gambar}{\overline{\Gamma}}
\newcommand{\Gamtil}{\widetilde{\Gamma}}
\newcommand{\gtil}{\widetilde{g}}

\newcommand{\grd}{\mathring{g}}

\newcommand{\pd}[2]{\frac{\partial #1}{\partial #2}}
\newcommand{\caution}{\textbf{Caution: \hspace{ 0.25 cm }}}

% Kulkarni-Nomizu product

% rescaled Seconde forme fondamentale
\newcommand{\Wsff}{W}

\newcommand{\Wsffud}[2]{\Wsff^{#1}_{\phantom{#1} #2}}

\begin{document}
\title[ALH metrics]{Conformal compactification of asymptotically locally hyperbolic metrics}
\author[Eric Bahuaud and Romain Gicquaud]{Eric Bahuaud and Romain Gicquaud}
\date{\today}
\keywords{Asymptotically hyperbolic metrics, Einstein metrics, conformally compact metrics, boundary regularity, geodesic compactification}                                   
\subjclass[2000]{53C21, 53C25, 58E10, 58J05, 35J70}
\address{Current address: \newline
Institut de Math\'ematiques et de Mod\'elisation de Montpellier \newline
UMR 5149 CNRS - Universit\'e Montpellier II \newline
Case Courrier 051 - Place Eug\`ene Bataillon \newline
34095 Montpellier, France \newline }
\email{ebahuaud@msri.org}
\email{Romain.Gicquaud@math.univ-montp2.fr}
\begin{abstract}
In this paper we study the extent to which conformally compact asymptotically hyperbolic metrics may be characterized intrinsically.  Building on the work of the first author in \cite{Bahuaud}, we prove that decay of sectional curvature to $-1$ and decay of covariant derivatives of curvature outside an appropriate compact set yield H\"older regularity for a conformal compactification of the metric.  In the Einstein case, we prove that the estimate on the sectional curvature implies the control of all covariant derivatives of the Weyl tensor, permitting us to strengthen our result.
\end{abstract}

\maketitle
\tableofcontents

\section{Introduction}
\label{section:introduction}

The study of complete Riemannian manifolds remains a lively and important topic of research.  Constant sectional curvature spaces are well understood: these spaces are quotients of simply connected models (either euclidean space, the round sphere or hyperbolic space) by a discrete group of isometries.  In an effort to understand more general Riemannian metrics on non-compact manifolds, it is natural to study metrics that approach one of the constant curvature models in some sense.  From a physical viewpoint these metrics are of interest in their own right as they represent natural Cauchy surfaces for isolated systems in general relativity.  Much is known about the structure at infinity of asymptotically flat manifolds, see for example \cite{Bartnik} and \cite{Bando} and references therein.  The purpose of this paper is to compare the difference between `classical' AH metrics and the (more natural) notion of asymptotically locally hyperbolic metrics.\\

We begin with a very heuristic idea of our approach.  We want our definition to be intrinsic, i.e. not to depend on choices of coordinates or the a priori existence of a manifold compactification. We first review a few basic facts. The Poincar\'e model of hyperbolic space $\bH^{n+1}$ is the the open unit ball $\bB^{n+1} \subset \bR^{n+1}$ equipped with the metric
\[ h = \frac{4}{(1-|x|^2)^2} \left((dx^1)^2 + \ldots + (dx^{n+1})^2\right).\]
The sectional curvature of $h$ for all two planes is $-1$.  Consequently, the hyperbolic metric is Einstein: $\ric_h = -nh$.  It is well known that a good description of the geometry of $\bH^{n+1}$ involves the boundary sphere at infinity.  This sphere may be described purely intrinsically as follows: define an equivalence relation on the set of geodesic rays parameterized by arc-length by saying $\sigma$ and $\tau$ are \textit{asymptotic} if $d_h(\sigma(t), \tau(t))$ remains bounded as $t \rightarrow +\infty$.  Denote the set of equivalence classes by $\bB(\infty)$.  One can show that given any point $p \in \bB^{n+1}$, $\bB^{n+1}(\infty)$ is in bijection with the unit sphere $S_p \bB^{n+1} \subset T_p \bB^{n+1}$ by a rescaled exponential map.  We obtain the \textit{geodesic compactification} $\overline{\bB^{n+1}} = \bB^{n+1} \bigcup \bB^{n+1}(\infty)$ by declaring this map to be a homeomorphism. Declaring this map to be a diffeomorphism gives a smooth structure on $\overline{\bB^{n+1}}$; in the case of hyperbolic space the smooth structure is independent of $p$. We note that the procedure outlined here was extended to arbitrary manifolds of nonpositive curvature by \cite{EberleinONeill}. In general one only expects the topological structure to be independent of $p$ (but see also \cite{AndersonSchoen}, \cite{BahuaudMarsh}).\\

Observe in the above compactification construction that we used the exponential map from a point to achieve the diffeomorphism. We could have well replaced the exponential map from a point $p$ with the normal exponential map from a sphere centered at $p$. This justifies the following definition: an \textit{essential subset}\label{pg:es} $K^{n+1}$ of a complete Riemannian manifold $M^{n+1}$ is a compact embedded submanifold with boundary $Y^n = \partial K$ such that $Y$ is convex with respect to the outward unit normal and the normal exponential map $E: Y \times [0, \infty) \longrightarrow M \setminus \mathring{K}$ is a diffeomorphism.\\

One of the most important established models of asymptotically hyperbolic metrics is based on conformal compactifications, which we now describe.  There are numerous other notions of asymptotically hyperbolic metrics, see \cite{ChruscielHerzlich} and \cite{HerzlichMass} for examples similar in spirit to the asymptotically flat case and relations to positive mass theorems.  Suppose $(M,g)$ is a noncompact Riemannian $(n+1)$-manifold that is the interior of a compact manifold with boundary $\Mbar$.  For $k \in \bN_0, \alpha \in [0,1]$, the metric $g$ is \textit{$\mathcal{C}^{k,\alpha}$ conformally compact} if there exists a defining function $\rho$ for the boundary such that $\gbar = \rho^2 g$ extends to a $\mathcal{C}^{k,\alpha}$ metric on $\Mbar$.  Such a metric induces a conformal class on the boundary $\partial M$, called the \textit{conformal infinity} of $g$.\\

Straightforward calculations yield that if $g$ is at least $\mathcal{C}^2$ conformally compact then the sectional curvatures in $M$ satisfy
\begin{equation} \label{AHdecay} 
\sec = -|d\rho|^2_{\gbar} + O(\rho)~\text{near $\partial M$}. 
\end{equation}
If $|d\rho|^2_{\gbar} = 1$ on $\partial M$, then the sectional curvatures of $M$ approach $-1$ near $\partial M$.  This justifies the following definition.  The metric $g$ is \textit{asymptotically hyperbolic} if $g$ is conformally compact and $|d\rho|^2_{\gbar} = 1$ on $\partial M$.  The classical setting typically requires at least a $\mathcal{C}^2$ conformal compactification.  As any two defining functions for $\partial M$ differ by a multiplication by a positive function, this definition is easily seen to be independent of $\rho$.  When $g$ is additionally an Einstein metric, i.e. $\ric_g = - n g$ (the `cosmological constant' being determined by the asymptotic value of the sectional curvature), the sectional curvatures of $g$ satisfy an improved decay estimate, i.e. \label{AHEDecayPage}
\begin{equation} \label{AHEdecay} 
\sec = -1 + O(\rho^2). 
\end{equation}
This is a consequence of the transformation law for the Ricci tensor under a conformal change of metric which proves that the first order correction for the sectional curvature must vanish for an Einstein manifold, see e.g. \cite{AndersonEinstein}.\\

Conformally compact metrics have proved to be important in Riemannian and conformal geometry in no small part due to the work of Fefferman and Graham \cite{FG} and to Maldacena's AdS/CFT correspondence (see for example \cite{AdS-CFT}).  The basic outline is to relate the Riemannian geometry of the Einstein metric $g$ to the conformal geometry of the conformal infinity $\partial M$.  A lot of work has been dedicated to the existence and regularity questions of these metrics, see e.g. \cite{GrahamLee}, \cite{Lee}, \cite{AndersonBoundary}, \cite{CDLS} and the references therein.  Such manifolds also appear in other contexts such as general relativity where they are good candidates for Cauchy surfaces in asymptotically simple space-times.  We refer the reader to \cite{AnderssonChrusciel} and \cite{Gicquaud} for more details.  In this setting, the conformal infinity is not given a priori and it is a natural question to wonder to what extent the boundary at infinity can be reconstructed.  In particular, the regularity of the compactified metric is an important ingredient in applying elliptic theory in these spaces, see \cite{Lee} for example.\\

In an earlier paper \cite{Bahuaud}, the first author began to study to what extent conformally compact AH metrics can be characterized intrinsically. We review this result. Conformally compact metrics possess essential subsets (just take $K = \{ \rho \geq \epsilon\}$ for $\epsilon$ sufficiently small) so this definition provides a good departure point for our study.  
The main result of \cite{Bahuaud} is
\begin{theorem} \label{thm:EricThesisResult} Suppose $(M,g)$ is a complete noncompact Riemannian manifold and $K$ is an essential subset.  Let $r(x) = dist_g(x, K)$.  Assume further that
\begin{equation*} 
\sec(  M \setminus \mathring{K} ) < 0, \text{and}
\end{equation*}
\begin{equation*} 
\sec( M \backslash K ) = -1 + o(1), \text{ and }
\end{equation*} 
\begin{equation*} 
|\nabla_g \riem|_g = O( e^{-\omega r} ), \; \text{for some} \; \omega > 1.
\end{equation*}
Then $\Mbar = M \cup M(\infty)$ is a topological manifold with boundary endowed with a $\mathcal{C}^{1,1}$ structure independent of $K$.  Further $\gbar := e^{-2r} g$ extends to a $\mathcal{C}^{0,1}$ metric on $\Mbar$, i.e. $g$ is $\mathcal{C}^{0,1}$ conformally compact.
\end{theorem}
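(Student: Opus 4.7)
Because $K$ is essential, the normal exponential map $E: Y \times [0,\infty) \to M \setminus \mathring{K}$ (with $Y = \partial K$) is a diffeomorphism, so Fermi coordinates $(y,r)$ are defined globally on $M \setminus \mathring K$ and $g = dr^2 + h_r$ for a smooth one-parameter family of metrics $h_r$ on $Y$. Let $S(r)$ denote the shape operator of the level hypersurface $\Sigma_r$. Then $S$ satisfies the Riccati equation
\[
\partial_r S + S^2 = -\riem_{\mathrm{rad}}, \qquad \riem_{\mathrm{rad}}(X) := \riem(X,\partial_r)\partial_r.
\]
In model hyperbolic space $S(r) \to \mathrm{Id}$ and $e^{-2r} h_r \to h_\infty$ for a metric $h_\infty$ on $Y$; my plan is to reproduce this behaviour \emph{quantitatively}, introduce the defining function $\rho := 2 e^{-r}$, and read off the compactification.

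\textbf{Curvature and shape-operator decay.} The sectional curvature assumption gives $\riem_{\mathrm{rad}} + \mathrm{Id} = o(1)$ as $r \to \infty$. Since $|\nabla \riem|_g = O(e^{-\omega r})$ with $\omega > 1$, parallel-transporting an orthonormal frame along each outward radial geodesic and integrating $\partial_r(\riem_{\mathrm{rad}} + \mathrm{Id})$ from $r$ to $+\infty$ upgrades the pointwise decay to the quantitative bound
\[
\riem_{\mathrm{rad}} + \mathrm{Id} = O(e^{-\omega r}).
\]
Writing $S = \mathrm{Id} + A$, the Riccati equation becomes $\partial_r A + 2A + A^2 = -(\riem_{\mathrm{rad}} + \mathrm{Id})$. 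Once one knows (as a soft consequence of the curvature convergence) that $A = o(1)$, the quadratic term is subordinate and a Gronwall estimate against the linear operator $\partial_r + 2$ yields $A = O(e^{-\gamma r})$ with $\gamma := \min(\omega, 2) > 1$.

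\textbf{Compactification.} In the chosen coordinates $\partial_r h_r = 2\, h_r(S\cdot,\cdot)$, hence $\partial_r(e^{-2r} h_r) = 2 e^{-2r} h_r(A\cdot,\cdot)$; because $\gamma > 1$, the right-hand side is integrable in $r$, so $h_\infty := \lim_{r \to \infty} e^{-2r} h_r$ exists on $Y$. Setting $\bar h_\rho := (\rho/2)^2 h_r = e^{-2r} h_r$, a direct computation gives
\[
\gbar = e^{-2r} g = \tfrac14 d\rho^2 + \bar h_\rho, \qquad \partial_\rho \bar h_\rho = -\tfrac{2}{\rho}\,\bar h_\rho(A\cdot,\cdot) = O(\rho^{\gamma - 1}),
\]
which is bounded near $\rho = 0$ and vanishes in the limit. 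Therefore $\gbar$ extends $\mathcal{C}^{0,1}$ across $\{\rho = 0\} \cong Y$ with boundary value $h_\infty$. To identify $\{\rho = 0\}$ with $\Minf$ and confirm that the resulting smooth structure on $\Mbar$ does not depend on $K$, I would show that each outward normal geodesic $\sigma_y(r) = E(y,r)$ represents a distinct asymptotic class (using the exponential divergence of Jacobi fields in negative curvature) and that every asymptotic class is so represented (surjectivity of $E$ together with asymptotic rigidity in negative curvature, following Eberlein--O'Neill). For a second essential subset $K'$, the transition from its normal parametrization of $\Minf$ to ours has derivatives controlled by $A = O(e^{-\gamma r})$ with $\gamma > 1$, and is therefore $\mathcal{C}^{1,1}$.

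\textbf{Main obstacle.} The analytic heart of the argument is the Riccati estimate, and the hypothesis $\omega > 1$ is exactly the threshold used: it is the integrability of $A$ along radial rays that makes $e^{-2r} h_r$ converge and $\gbar$ Lipschitz (rather than merely continuous) at infinity. Keeping the nonlinear term $A^2$ from eroding the exponent $\gamma$ below $1$ via a careful smallness / maximum-principle argument on the symmetric operator $A$ is the main technical point; once this is in hand, the identification of $\Minf$ and the $\mathcal{C}^{1,1}$ compatibility of different essential subsets reduce to standard but careful Jacobi-field and negative-curvature arguments.
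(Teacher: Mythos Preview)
Your outline captures the normal-direction analysis correctly, but it has a genuine gap: you only control the $\rho$-derivative of $\gbar$, not its tangential derivatives. Knowing that $\partial_\rho \bar h_\rho = -\tfrac{2}{\rho}\bar h_\rho(A\cdot,\cdot)$ is bounded tells you that $\bar h_\rho$ converges as $\rho\to 0$ and that $\gbar$ is Lipschitz in the $\rho$-direction; it does \emph{not} tell you that $\partial_{y^\alpha}\gbar_{\mu\nu}=e^{-2r}\partial_{y^\alpha}(h_r)_{\mu\nu}$ stays bounded as $r\to\infty$. Without that, $\gbar$ is at best $\mathcal{C}^0$ up to the boundary, not $\mathcal{C}^{0,1}$. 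This is not a formality: $\partial_{y^\alpha}\bar h_\rho$ evolves in $r$ through $\partial_{y^\alpha}S$, so one must differentiate the Riccati equation tangentially and control the resulting coupled system for $(\partial_y S,\partial_y \gbar)$. It is precisely here that the hypothesis $|\nabla_g\riem|_g=O(e^{-\omega r})$ is used in full: the inhomogeneous term involves $\partial_{y^\alpha}\riem_{\mathrm{rad}}$, which is estimated from the covariant derivative of curvature together with Christoffel symbols that themselves depend on $\partial_y\gbar$. The paper's proof (and the proof in \cite{Bahuaud}) carries this out by rescaling and comparing the differentiated system to a model linear system with known asymptotics.

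A second, smaller gap is in your last paragraph. The $\mathcal{C}^{1,1}$ compatibility of the structures coming from two essential subsets is not obtained directly from the decay of $A$; rather, once $\gbar$ is known to be $\mathcal{C}^{0,1}$ in both Fermi coordinate systems, one bootstraps the transition map via the Christoffel transformation law (the Calabi--Hartman trick), gaining one order of regularity over the metric. Your sentence ``derivatives controlled by $A=O(e^{-\gamma r})$'' conflates the shape-operator decay with this separate argument.
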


As an example, all of the assumptions above hold sufficiently close to the boundary of a smoothly conformally compact Einstein metric.  We remark \label{remark:curv} that assumption on the covariant derivative of curvature implies that sectional curvature estimate enjoys the same rate of decay, i.e. in fact $\sec( M \backslash K ) = -1 + O( e^{-\omega r} )$.  This is easily seen by integrating the components of $\riem+\riemcons$, where $\riemcons$ denotes the constant curvature tensor: $$\riemcons_{abcd} = g_{ad} g_{bc} - g_{ac} g_{bd},$$ with respect to a parallel frame along normal geodesics emanating from $K$.

We briefly describe the proof of this theorem.  Mimicking the classical geodesic compactification described above, it was proved in \cite{BahuaudMarsh} that $\Minf$ is in bijection with the boundary $Y$ by a rescaled exponential map, and that there is a natural topology on the geodesic compactification $\Mbar := M \cup M(\infty)$.  Further, it was proved in \cite{Bahuaud} that the asymptotic curvature pinching implies that $\Mbar$ has the structure of a $\mathcal{C}^{0,1}$ manifold independent of $K$.  Finally in order to prove that $\gbar$ is a Lipschitz metric, we take derivatives of the Riccati equation for the shape operator and metric of constant $r$ level sets in appropriate coordinates, and analyze the resulting system. Then the rough $\mathcal{C}^{0, 1}$-structure of $\Mbar$ can be improved to a $\mathcal{C}^{1,1}$-structure independent of $K$ using a trick of Calabi-Hartman \cite{CalabiHartman}.

The purpose of the present paper is twofold.  We first extend the result of \cite{Bahuaud} to obtain complete understanding of how the rate of curvature decay influences the regularity of the conformal compactification, and we explain how further regularity can be obtained by assuming appropriate decay of $|\nabla^2 \riem|$.  In particular we prove
\begin{maintheorem} \label{thm:A-part1} Suppose $(M,g)$ is a complete noncompact Riemannian manifold and $K$ is an essential subset.  Let $r(x) = dist_g(x, K)$.  Assume further that
\begin{equation}  \label{NSC} \tag{NSC}
\sec(  M \setminus \mathring{K} ) < 0, \text{and}
\end{equation}
\begin{equation} \label{AH0} \tag{AH0}
|\riem + \riemcons|_g = O(e^{-ar}),
\end{equation}
\begin{equation} \label{AH1} \tag{AH1}
|\nabla_g \riem |_g = O(e^{-ar}),
\end{equation}
Then:
\begin{itemize}
\item If $0 < a < 1$, $\Mbar = M \cup M(\infty)$ is endowed with a $\mathcal{C}^{1,a}$ structure independent of $K$, and $\gbar := e^{-2r} g$ extends to a $\mathcal{C}^{0,a}$ metric on $\Mbar$.
\item If $a = 1$, $\Mbar = M \cup M(\infty)$ is endowed with a $\mathcal{C}^{1,b}$ structure independent of $K$, and $\gbar := e^{-2r} g$ extends to a $\mathcal{C}^{0,b}$ metric on $\Mbar$, for every $b \in (0,1)$.
\item If $a > 1$, $\Mbar = M \cup M(\infty)$ is endowed with a $\mathcal{C}^{1,1}$ structure independent of $K$, and $\gbar := e^{-2r} g$ extends to a $\mathcal{C}^{0,1}$ metric on $\Mbar$.
\end{itemize}
\end{maintheorem}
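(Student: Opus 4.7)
The proof follows the strategy of \cite{Bahuaud} and \cite{BahuaudMarsh}, but tracks the exponent $a$ carefully throughout. The setup is the normal exponential map $E : Y \times [0,\infty) \rightarrow M \setminus \mathring{K}$, which is a diffeomorphism by the essential-subset hypothesis (supplemented by NSC, which prevents focal points). Writing $g = dr^2 + g_r$ on $M \setminus \mathring{K}$, the family of induced metrics $g_r$ on the level sets $\Sigma_r$ evolves by $\partial_r g_r = 2 g_r S$, where the shape operator $S$ satisfies the Riccati equation
\[
\nabla_{\partial_r} S + S^2 = -\riem_{\partial_r}, \qquad \riem_{\partial_r}(X) = \riem(X, \partial_r)\partial_r.
\]

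The first step is to exploit \eqref{AH0} via the Riccati equation. Setting $\sffr = S - \mathrm{id}$ (the correct ansatz because $S \equiv \mathrm{id}$ for the hyperbolic background), one obtains
\[
\sffr' + 2\sffr + \sffr^2 = -(\riem_{\partial_r} + \mathrm{id}),
\]
whose right-hand side is $O(e^{-ar})$ by \eqref{AH0}. Together with NSC, which ensures $\sffr$ remains small enough that the quadratic term is perturbative, a standard decaying-solution analysis of this Riccati ODE (along each integral curve of $\partial_r$) produces the pointwise bound $|\sffr|_g = O(e^{-ar})$ in the range $0 < a < 2$ which is the only range relevant here. Inserting this into the transport equation for $g_r$ gives
\[
\frac{d}{dr}\bigl(e^{-2r} g_r\bigr) = 2 e^{-2r} g_r \sffr = O(e^{-ar}),
\]
so $\gbar_\infty := \lim_{r\to\infty} e^{-2r} g_r$ exists on $Y$ with $|e^{-2r} g_r - \gbar_\infty|_{\gbar_\infty} = O(e^{-ar})$. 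Passing to the defining function $t = e^{-r}$ gives $\gbar = dt^2 + t^2 g_{-\log t}$, and the limit estimate becomes $|t^2 g_{-\log t} - \gbar_\infty| = O(t^a)$, which is the claimed Hölder regularity of $\gbar$ in the normal direction (with exponent $a$ when $a \leq 1$, Lipschitz when $a \geq 1$).

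The second step is to obtain tangential regularity by differentiating the Riccati equation along $\Sigma_r$. This yields an evolution equation for $\tgrad S$ whose source involves $\tgrad \riem$, and hence is $O(e^{-ar})$ by \eqref{AH1}. The same Riccati technology then gives $|\tgrad \sffr|_g = O(e^{-ar})$, which in turn controls tangential derivatives of $e^{-2r} g_r$ and therefore yields the asserted Hölder regularity of $\gbar$ jointly in $t$ and the tangential coordinates. To get the manifold structure on $\Mbar$ independent of $K$, one compares the normal exponential charts from two different essential subsets $K$ and $K'$; the change of coordinates is a map between asymptotic geodesic foliations whose derivatives are expressed in terms of the shape operators, so the estimates above translate into Hölder control of the transition functions. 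Finally, we invoke the Calabi--Hartman trick, as in \cite{Bahuaud}, to upgrade the resulting $\mathcal{C}^{0,a}$ atlas into a $\mathcal{C}^{1,a}$ (resp. $\mathcal{C}^{1,b}$, $\mathcal{C}^{1,1}$) differentiable structure on $\Mbar$.

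The main obstacle, and the reason for the trichotomy in the conclusion, is the behavior of the integrals appearing in the Riccati and transport analyses at the borderline rate $a = 1$. For $a < 1$ the decay $O(e^{-ar})$ integrates to produce an $O(t^a)$ remainder, giving genuine Hölder regularity; for $a > 1$ the remainder is $O(t)$ with integrable derivative, giving Lipschitz regularity and $\mathcal{C}^{1,1}$ on the atlas side via Calabi--Hartman. At $a = 1$ a logarithmic loss appears when one tries to combine the sharp $O(t)$ normal estimate with the transition-map comparison between essential subsets, which forces us to settle for the slightly weaker $\mathcal{C}^{0,b}$, $\mathcal{C}^{1,b}$ conclusion for every $b < 1$. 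A secondary technical point, but one that must be handled with care throughout, is controlling the quadratic term $\sffr^2$ in the Riccati equation; without NSC this term could a priori dominate the source and destroy the desired decay rate.
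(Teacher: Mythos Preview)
Your overall architecture matches the paper's, but there is one genuine gap and one misdiagnosis.

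\textbf{The gap.} You write that differentiating the Riccati equation along $\Sigma_r$ gives an evolution for $\tgrad S$ whose source is $O(e^{-ar})$ by \eqref{AH1}, and that ``the same Riccati technology then gives $|\tgrad\sffr|_g=O(e^{-ar})$, which in turn controls tangential derivatives of $e^{-2r}g_r$''. In fact the differentiated system is \emph{coupled} to the tangential derivatives of $\gbar$ and cannot be solved in one pass. H\"older regularity is a coordinate statement, so one ultimately needs $\partial_\mu \sffud{\beta}{\alpha}$ and $\partial_\mu \gbar_{\alpha\beta}$ in Fermi coordinates; but the source in the $\partial_\mu S$ equation is $\partial_\mu \riemdudd{0}{\beta}{\alpha}{0}$, and relating this to the covariant derivative controlled by \eqref{AH1} leaves residual terms
\[
\partial_\mu \riemdudd{0}{\beta}{\alpha}{0}= -\,\Gambar^{\beta}_{\sigma\mu}\bigl(\riemdudd{0}{\sigma}{\alpha}{0}+\kronecker{\sigma}{\alpha}\bigr) + \Gambar^{\sigma}_{\alpha\mu}\bigl(\riemdudd{0}{\beta}{\sigma}{0}+\kronecker{\beta}{\sigma}\bigr) + O(e^{(1-a)r}),
\]
where the $\Gambar$ are the Christoffel symbols of $\gbar$ and hence are built from the unknowns $\partial_\mu\gbar_{\alpha\beta}$. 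After the rescaling $W=e^{2r}S$ this feeds $\partial\gbar$ back into the equation for $\partial W$ with a coefficient of size $e^{(2-a)r}$, so one is forced to analyze the pair $(\partial W,\partial\gbar)$ as a coupled linear system and compare it with an explicit model system before either quantity can be estimated. Working covariantly does not sidestep this: the commutator $[\partial_r,\tgrad_\mu]$ contains $\partial_r\Gamma^{\Sigma}$, which again involves coordinate derivatives of the metric, and in any case the passage from a tensor-norm bound on $\tgrad\sffr$ to chartwise H\"older control of $\gbar_{\alpha\beta}$ reintroduces exactly these Christoffel terms.

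\textbf{The misdiagnosis of the trichotomy.} The logarithmic loss at $a=1$ does not originate in the transition-map comparison between essential subsets; it already appears in a single Fermi chart. Once the coupled system is solved, the transport equation for $\partial_\mu\gbar_{\alpha\beta}$ has an effective source of order $e^{(1-a)r}$. For $a<1$ this integrates to $O(e^{(1-a)r})=O(\rho^{a-1})$ (hence $\mathcal{C}^{0,a}$); for $a>1$ it is integrable (hence Lipschitz); and for $a=1$ it integrates to $O(r)=O(|\log\rho|)$, which is precisely what forces the drop to $\mathcal{C}^{0,b}$ for all $b<1$. The improvement of the atlas from $\mathcal{C}^{0,1}$ to $\mathcal{C}^{1,a}$ (resp.\ $\mathcal{C}^{1,b}$, $\mathcal{C}^{1,1}$) is then obtained \emph{afterward}, by bootstrapping the transformation law for Christoffel symbols from the metric regularity already established---essentially the Calabi--Hartman mechanism you cite, but applied downstream of the metric estimate rather than as its source.
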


Note that this Theorem is sharp in the case $a=1$.  In \cite{Bahuaud} the first author provided an example of a metric which satisfies \eqref{AH0} and \eqref{AH1} for $a=1$ but with no Lipschitz conformal compactification.

\begin{maintheorem} \label{thm:A-part2} 
Given all of the hypothesis of Theorem \ref{thm:A-part1}, assume additionally 
\begin{equation} \label{AH2} \tag{AH2}
|\nabla_g^2 \riem |_g = O(e^{-ar}),
\end{equation}
\begin{itemize}
\item If $1 < a < 2$, $\Mbar = M \cup M(\infty)$ is endowed with a $\mathcal{C}^{2,a-1}$ structure independent of $K$, and $\gbar := e^{-2r} g$ extends to a $\mathcal{C}^{1,a-1}$ metric on $\Mbar$.
\item If $a = 2$, $\Mbar = M \cup M(\infty)$ is endowed with a $\mathcal{C}^{2,b}$ structure independent of $K$ and $\gbar := e^{-2r} g$ extends to a $\mathcal{C}^{1,b}$ metric on $\Mbar$, for every $b \in (0,1)$.
\end{itemize}
\end{maintheorem}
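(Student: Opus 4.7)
The plan is to extend the proof of Theorem~\ref{thm:A-part1} by differentiating the Riccati equation one additional time in the tangential directions and tracking the new gain provided by the hypothesis \eqref{AH2}. As in Theorem~\ref{thm:A-part1}, I would work in polar coordinates $(r,y)$ coming from the normal exponential map $E\colon Y\times[0,\infty) \to M\setminus\mathring{K}$, so that $g = dr^2 + g_r$ outside $K$. The natural defining function $\rho = e^{-r}$ puts the conformal compactification in the form
\[
\gbar = d\rho^2 + \rho^2 g_r,
\]
and the task of producing a $\mathcal{C}^{1,a-1}$ extension of $\gbar$ to $\Mbar$ (or $\mathcal{C}^{1,b}$ for every $b\in(0,1)$ when $a=2$) reduces to controlling one further tangential derivative of the shape operator $S$ of the $r$-level sets than what sufficed in Theorem~\ref{thm:A-part1}.

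The main mechanism is the Riccati equation $S' + S^2 = -R$, where $R$ denotes the Jacobi operator built from $\riem$. Theorem~\ref{thm:A-part1} already supplies $|S - \mathrm{Id}| = O(e^{-ar})$ from \eqref{AH0}, and a first tangential differentiation, combined with \eqref{AH1}, yields $|\nabla^Y S| = O(e^{-ar})$. To reach the regularity of Theorem~\ref{thm:A-part2}, I would differentiate the Riccati equation a second time along $Y$ to obtain, schematically,
\[
(\nabla^Y \nabla^Y S)' + 2\, S\cdot(\nabla^Y \nabla^Y S) = -\nabla^Y \nabla^Y R - 2\,(\nabla^Y S)(\nabla^Y S).
\]
Hypothesis \eqref{AH2} controls $\nabla^Y \nabla^Y R$ by $O(e^{-ar})$, the quadratic correction is $O(e^{-2ar})$ and hence subleading, and the homogeneous part, driven by $2S\sim 2\,\mathrm{Id}$, produces exponential decay. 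Integrating gives $|\nabla^Y \nabla^Y S| = O(e^{-ar})$ when $1<a<2$, and the borderline bound $|\nabla^Y \nabla^Y S| = O(r e^{-2r})$ when $a=2$.

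These shape-operator bounds translate directly into H\"older estimates on $\gbar$ in $(\rho,y)$ coordinates: using $\partial_r g_r = 2Sg_r$, the $\rho$-derivative of $\rho^2 g_r$ is essentially $-2\rho(S-\mathrm{Id})g_r$, the mixed derivatives $\partial_\rho \partial_{y^i}$ of $\rho^2 g_r$ reduce to $\nabla^Y S$, and the tangential second derivatives reduce to $\nabla^Y \nabla^Y S$; feeding the three decay estimates into these expressions produces the claimed $\mathcal{C}^{1,a-1}$ or $\mathcal{C}^{1,b}$ extension of $\gbar$. The smooth structure on $\Mbar$ furnished by Theorem~\ref{thm:A-part1} is then upgraded from $\mathcal{C}^{1,1}$ to $\mathcal{C}^{2,a-1}$ (respectively $\mathcal{C}^{2,b}$) independently of $K$ by the same Calabi--Hartman trick \cite{CalabiHartman} used previously, applied now to the transition map between the geodesic charts associated with two different essential subsets, which inherits the added regularity from $\gbar$. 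The main obstacle I anticipate is the second-order tangential Riccati analysis itself: the source term $\nabla^Y \nabla^Y R$ is algebraically heavier than the raw tensor $\nabla^2 \riem$ because of the frame-dependence of $R$, and the logarithmic correction at $a=2$ must be tracked carefully, which is exactly what forces the loss at the borderline from $\mathcal{C}^{2,1}$ down to $\mathcal{C}^{2,b}$ for every $b<1$.
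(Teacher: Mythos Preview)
Your plan is the paper's approach: differentiate the Riccati system twice tangentially, control the curvature source via \eqref{AH2}, integrate, convert decay rates to H\"older regularity (the paper's Lemma~\ref{lemma:holderreg}), and upgrade the atlas via the Christoffel transformation law (Lemma~\ref{lemma:transfncreg}), which is the Calabi--Hartman mechanism you cite.

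The one place your sketch is too quick---and you flag it yourself---is the claim that \eqref{AH2} directly gives $\nabla^Y\nabla^Y R = O(e^{-ar})$. It does not: passing from the covariant bound on $\nabla^2\riem$ to a bound on second derivatives of the Jacobi operator $R^{\,\beta}_{\ \alpha} = \riemdudd{0}{\beta}{\alpha}{0}$ requires differentiating Christoffel symbols, which produces terms of the schematic form $\partial^2\gbar * (\riem+\delta)$ (Lemma~\ref{lemma:est.sec.cov.der}). Since $\partial^2\gbar$ is exactly the unknown, the twice-differentiated Riccati equation is a genuinely \emph{coupled} system in $(\partial^2 S,\,\partial^2\gbar)$, not a single linear ODE for $\nabla^Y\nabla^Y S$ with known source. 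The paper handles this by rescaling $W=e^{2r}S$, writing the coupled system~\eqref{Transformed.Second.Tangential.System}, and comparing against an explicit model system via the comparison principle Theorem~\ref{thm:ODEcomp} (with the model asymptotics worked out in the appendix, Propositions~\ref{prop:asypmodel1}--\ref{prop:asypmodel2}). Your ``homogeneous part driven by $2S\sim 2\,\mathrm{Id}$'' captures the leading behaviour, but closing the argument requires absorbing this feedback, which is the real content of Proposition~\ref{prop:seconditeration}. A secondary point: $\partial_r$ and $\nabla^Y$ do not commute (the slice connection varies with $r$), so your displayed equation acquires commutator terms; the paper sidesteps this by working with coordinate derivatives $\partial_\mu$, which commute with $\partial_r$, and then relating coordinate to covariant derivatives of curvature through Lemmas~\ref{lemma:est.covg0gg0}--\ref{lemma:est.sec.cov.der}.
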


The proof of this theorem mimics its counterpart in \cite{Bahuaud}.  We do not pursue analysis for faster decay because of rigidity results for asymptotically hyperbolic metrics; see \cite{ShiTian} for example.  Note that while we present our results in terms of H\"older-type estimates, it easy to obtain $W^{2,p}$-Sobolev estimates for the compactified metric $\gbar$, however the Sobolev embedding theorem applied to these estimates does not yield optimal H\"older regularity.\\

The second purpose of this paper is strengthen our results significantly in the case that $g$ is Einstein, i.e. $\ric_g=-ng$.  As we will deal only with manifolds whose curvature tends to $-1$ at infinity, we assume implicitly the normalization $\ric_g = -n g$. We prove 

\begin{maintheorem} \label{thm:B} Let $(M,g)$ be a complete noncompact Riemannian manifold containing an essential subset such that (AH0) holds for an arbitrary $a>0$ and $g$ is Einstein.  Then $|\nabla_g^{(j)} \riem |_g = O(e^{-ar})$ holds for all $j \geq 1$.  
\end{maintheorem}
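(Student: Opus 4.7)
The plan is to reinterpret the hypothesis in terms of the Weyl tensor and then to bootstrap via elliptic regularity. Since $g$ is Einstein with $\ric_g = -ng$, the Ricci and scalar components of $\riem$ combine into a constant multiple of $g \kulk g$, and an elementary algebraic computation using the standard decomposition of the Riemann tensor shows that
\[
\riem + \riemcons = \weyl.
\]
Hence \eqref{AH0} is nothing other than the statement $|\weyl|_g = O(e^{-ar})$. Moreover $\riemcons$ is polynomial in $g$, hence parallel, so $\nabla_g^{(j)} \riem = \nabla_g^{(j)} \weyl$ for every $j \geq 1$, and it suffices to prove $|\nabla_g^{(j)} \weyl|_g = O(e^{-ar})$ for all such $j$.

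The key analytic ingredient is the classical semilinear elliptic equation satisfied by the Weyl tensor of an Einstein manifold. The once-contracted second Bianchi identity $\nabla^a \riem_{abcd} = \nabla_c \ric_{bd} - \nabla_d \ric_{bc}$ vanishes under the Einstein condition, forcing $\nabla^a \weyl_{abcd} = 0$. Taking one more divergence and commuting covariant derivatives via the Ricci identity then produces a Lichnerowicz-type equation of the schematic form
\[
\Delta_g \weyl = \weyl \ast \weyl + c\, \weyl,
\]
where $c$ is a universal constant determined by $n$ and the normalization $\ric_g = -ng$, and $\ast$ denotes an algebraic contraction of two Weyl tensors (this is standard; see for instance Chapter~16 of Besse's \emph{Einstein Manifolds} for the explicit formula).

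The bootstrap is then carried out in the classical manner. Hypothesis \eqref{AH0} implies that the full curvature tensor is uniformly bounded, which combined with the essential subset hypothesis provides uniform bounded geometry on $M \setminus K$: the curvature bound yields a positive lower bound on the convexity radius, while the diffeomorphism property of the normal exponential map from $\partial K$ together with the absence of conjugate points in negative sectional curvature (which holds outside a compact set by \eqref{AH0}) yields a positive lower bound on the injectivity radius that is independent of the base point. One may therefore cover a neighborhood of infinity by a uniform family of harmonic coordinate charts of fixed size in which $g$ has $C^{k,\alpha}$ estimates independent of the base point. Applying interior Schauder estimates to the elliptic equation above and inducting on $j$ then yields $|\nabla_g^{(j)} \weyl|_g = O(e^{-ar})$ for every $j \geq 1$; at each stage the quadratic source $\weyl \ast \weyl = O(e^{-2ar})$ is of strictly higher order than the linear term $c\,\weyl = O(e^{-ar})$, so no loss of exponential rate occurs, and commutators introduced when differentiating the equation contribute only terms of the form $\riem \ast \nabla_g^{(i)} \weyl$ with $i < j$ which are already controlled by the induction hypothesis. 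The main technical obstacle is verifying base-point independence of the Schauder constants as $r \to \infty$, which is exactly the content of the uniform bounded-geometry package outlined above.
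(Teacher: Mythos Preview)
Your proposal is correct and follows essentially the same approach as the paper: identify $\riem+\riemcons=\weyl$ under the Einstein condition, derive the semilinear elliptic equation $\Delta\weyl = -2n\weyl + \tilde{\mathcal{Q}}(\weyl,\weyl)$, establish a uniform lower bound on the harmonic radius outside $K$, and bootstrap via interior elliptic estimates. The only notable difference is that the paper uses $W^{k,p}$ interior estimates followed by Sobolev embedding rather than Schauder estimates directly, and it handles the injectivity radius bound more explicitly by proving that no closed geodesic can lie entirely in the collar (convexity of $r$ forces any such geodesic into a single level set, which contradicts positivity of the second fundamental form); your sketch of this step via ``bounded geometry'' is a little brisk, since a curvature bound alone does not rule out short closed geodesics, but the essential ingredients you name are the right ones.
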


In particular, this theorem is valid for any conformally compact Einstein manifold.  This confirms the naive idea for an Einstein metric that anything better than $\mathcal{C}^2$-boundary regularity is non-local data and cannot be detected by the behaviour of curvature quantities at infinity.  Applying this theorem and Theorems \ref{thm:A-part1}, \ref{thm:A-part2} we obtain immediately that an Einstein metric $g$ with sectional curvature decay $|\riem + \riemcons|_g = O(e^{-2r})$ is $\mathcal{C}^{1,b}$ conformally compact for every $0 < b < 1$.\\

We remark that whereas the proof of Theorems \ref{thm:A-part1}, \ref{thm:A-part2} use ODE analysis to obtain estimates, the proof of Theorem \ref{thm:B} uses elliptic PDE theory.  The Einstein condition allows us to construct appropriate harmonic coordinate balls where the metric $g$ and its derivatives are appropriately controlled.  From a standard formula for the Laplacian of the curvature $4$-tensor we derive an elliptic equation for the Weyl curvature tensor.  We then use elliptic theory to conclude that all derivatives of this tensor decay to the same order.  In a forthcoming paper, the authors plan to study the case of ALH Einstein manifolds in greater detail.\\

This paper is organized as follows.  In Section \ref{section:notation}, we fix notation and prove the basic shape operator and metric estimates to obtain our first manifold compactification.  In Section \ref{section:riccati-analysis} we study the Riccati system for the metric and shape operator in the same spirit as \cite{Bahuaud} and prove Theorems \ref{thm:A-part1} and \ref{thm:A-part2}.  In Section \ref{section:einstein} we study the Einstein case in detail and prove Theorem \ref{thm:B}.\\

\noindent\textit{Acknowledgments.} The authors are grateful to Erwann Delay, Marc Herzlich, Jack Lee and Rafe Mazzeo for useful discussions and support.  We also thank Piotr Chru\'sciel and Michael Anderson for their interest in this work.
% Notation
\section{Background and Notation}
\label{section:notation}

In this section we fix notation and recall a few facts.  Throughout the paper $M$ denotes a complete non-compact smooth Riemannian manifold and $K$ an essential subset of $M$:  recall that this means $K^{n+1}$ is a compact embedded submanifold with boundary $Y^n = \partial K$ such that $Y$ is convex with respect to the outward unit normal and the normal exponential map $E: Y \times [0, \infty) \longrightarrow M \setminus \mathring{K}$ is a diffeomorphism.  One sufficient condition to imply the existence of an essential subset was given in \cite{BahuaudMarsh}: if $K$ is totally convex in $M$ and $\sec(M \setminus K) < 0$ then $K$ is an essential subset (recall $K$ is totally convex if for all $p, q \in K$ and \textit{any} geodesic curve $\gamma: [0,1] \rightarrow M$ with $\gamma(0) = p$, $\gamma(1) = q$ then $\gamma([0,1]) \subset K$).  Conformally compact metrics possess essential subsets: if $\rho$ is a defining function then $K = \rho^{-1}([\epsilon; \infty))$ is an essential subset for small enough $\epsilon > 0$ due to convexity properties of the function $r = - \log \rho$ near the boundary (the proof is similar to Lemma \ref{LmClosedGeod}).

In light of the diffeomorphism $Y \times [0, \infty) \approx M \setminus \mathring{K}$ and the fact that $r$ is the distance to $K$, we may decompose $g$ as
\[ g = dr^2 + g_r, \]
where $g_r$ is a one parameter family of metrics on $Y$.  We cover $Y$ with finitely many sufficiently small normal coordinate balls as in \cite{BahuaudMarsh}.  We label such coordinates $\{y^{\alpha}\}$, and extending such coordinates to be constant along the integral curves of $r$ provides Fermi coordinates on cylinders.  In such a cylinder the metric decomposes as
\[ g = dr^2 + g_{\alpha \beta}(y, r) dy^{\alpha} dy^{\beta}. \]
We use Greek indices \label{page:greek} (with the exception of $\rho$) to index directions along $Y$ and consequently these range from $1$ to $n$.  We use Latin indices to index directions in $M$ and these range from $0$ to $n$; we consistently use the subscript $0$ for the normal direction.

We will need to consider various curvature quantities for the metric restricted to constant $r$ slices.  We denote constant $r$ slices by $\Sigma_r$, often omitting the subscript.  Define the second fundamental form of $r$-level sets by $\sff(X,Z) = g( \nabla_X Z, -\partial_r )$ where $X, Z$ are tangent to $\Sigma_r$.  We denote the shape operator (a (1,1)-tensor) by the same symbol $\sff$.  Note that in Fermi coordinates the following relations are useful.
\[ \sffdd{\alpha}{\beta} = g( \nabla_{\partial_{\alpha}} \partial_{\beta}, -\partial_r) = g( \Gamma^0_{\alpha \beta} \partial_r , -\partial_r) = - \Gamma^0_{\alpha \beta} = \frac{1}{2} \partial_r g_{\alpha \beta}, \]
\[ \sffud{\beta}{\alpha} = g^{\beta \gamma} \sffdd{\gamma \alpha} = \frac{1}{2} g^{\beta \gamma} \partial_r g_{\gamma \alpha} = \Gamma^{\beta}_{0 \alpha}.\]
The sectional curvature of a two plane spanned by orthogonal unit vectors $X$ and $Z$ is given by $\riem(X,Z,Z,X)$.  Note that we denote the Weyl curvature by $\weyl$ and the constant curvature tensor by $\riemcons$.  In the case of an Einstein metric $\weyl = \riem + \riemcons$.  Throughout this paper we normalize the Einstein constant to be $-n$.

We collect some fundamental equations here for reference \cite{Petersen}: \label{pg:curveqns}
\begin{itemize}
\item \emph{Riccati equation}
\begin{equation} \label{eqn:Riccati}
\partial_{r} \sffud{\beta}{\alpha}  + \sffud{\beta}{\gamma} \sffud{\gamma}{\alpha}= -\riemdudd{0}{\beta}{\alpha}{0},
\end{equation}
\item \emph{Gauss equation}
\begin{equation} \label{TangentialCurvatureEq}
\riemdddd{\alpha}{\beta}{\gamma}{\delta}
	= \triemdddd{\alpha}{\beta}{\gamma}{\delta}
	+ \sffdd{\alpha}{\gamma} \sffdd{\beta}{\delta} - \sffdd{\alpha}{\delta} \sffdd{\beta}{\gamma},
\end{equation}
\item \emph{Codazzi-Mainardi equation}
\begin{equation}\label{NormalCurvatureEq}
\tgrad{\alpha} \sffdd{\beta}{\gamma} - \tgrad{\beta} \sffdd{\alpha}{\gamma} = - \riemdddd{\alpha}{\beta}{\gamma}{0},
\end{equation}
\item \emph{Evolution of the tangential metric under the geodesic flow}
\begin{equation}
\label{LieDerivativeMetricEq}
\partial_r g_{\mu\nu} = \mathcal{L}_{\partial_r} g_{\mu\nu} = 2 \sffdd{\mu}{\nu} (=2 g_{\mu\sigma} \sffud{\sigma}{\nu}).
\end{equation}
\end{itemize}

In what follows an inequality involving the shape operator of the form $\sffud{\beta}{\alpha} \geq c$  means that every eigenvalue of the shape operator $\sff$ is greater than or equal to $c$.  Inequalities involving a metric are to be interpreted as inequalities between quadratic forms.

We now outline our notation for order estimates.  If a tensor appears with subscripts in an order estimate, then an estimate of components in Fermi coordinates is implied, otherwise the tensor norm is implied.  For example a $2$-tensor $T$, the notation $T_{ij} = O(e^{-ar})$ means that the components of $T$ in Fermi coordinates satisfy the estimate.  As another example, the tensor norm quantity estimated in \eqref{AH0} implies the following estimate of components:
\[ (\riem + \riemcons)_{ijkl} = O( e^{(4-a)r} ).\]

Similarly,
\[ (\riem_{\partial r})_{\phantom{\beta}\alpha}^{\beta} + \kronecker{\beta}{\alpha} = \riemdudd{0}{\beta}{\alpha}{0} + \kronecker{\beta}{\alpha} = O(e^{-ar}),\]
\[ \riemdudd{0}{\beta}{\alpha}{\sigma} = O(e^{(1-a)r}), \; \riemdudd{\sigma}{\beta}{\alpha}{0} = O(e^{(1-a)r}).\]

We also document the following estimates derived from \eqref{AH1}:\label{pg:curvests}
\[ \nabla_{\mu} \riemdudd{0}{\beta}{\alpha}{0} = | (\nabla \riem)( \partial_r, dx^{\beta}, \partial_{\alpha}, \partial_r, \partial_{\mu})| = O(e^{(1-a)r}),\]
\[ \nabla_{0} \riemdudd{0}{\beta}{\alpha}{0} = O(e^{-ar}),\]
\[ \nabla_{\mu} \riemdudd{0}{\beta}{\alpha}{\sigma} = O(e^{(2-a)r}),\]
\[ \nabla_{\nu} \nabla_{\mu} \riemdudd{0}{\beta}{\alpha}{0}= O(e^{(2-a)r}).\]

We use the results of \cite{BahuaudMarsh} to compactify $M$ and obtain the first estimate of manifold regularity.  In order to do this we must prove metric estimates in an atlas of carefully chosen Fermi coordinates.  Following \cite{BahuaudMarsh} we cover $Y := \partial K$ by a \label{pg:refcov}reference covering of finitely many small open $g_Y$-normal coordinate balls $\{W_i\}$.   The $\{ W_i\}$ are chosen with sufficiently small radius chosen so that $g_{\alpha \beta}$ (transfered to $W$ by means of normal coordinates), the round metric $\grd$ on $\bS^n$ in normal coordinates and the flat metric on $W$ are comparable.  We show that an appropriate metric estimate holds on Fermi charts \label{choosingW} of the form $W \times [R, \infty)$, where $W \subset W_i$.  For $R>0$ sufficiently large, $g$ is then comparable to `comparison' hyperbolic metrics
\[ dr^2 + \sinh^2 (r \pm R) \; \grd, \]
and we may cite the compactification result Theorem 17 of \cite{BahuaudMarsh}.  See \cite{Bahuaud} for more details.

\subsection{The Riccati system} \label{sec:riccatisystem} 
As mentioned above, in a Fermi coordinate chart, the metric and shape operator satisfy the following system of differential equations that we refer to as the Riccati system:
\begin{equation} \label{Riccati.System}
\left\{\begin{aligned}
(\sffud{\beta}{\alpha})'  + \sffud{\beta}{\gamma} \sffud{\gamma}{\alpha}&= -(\riem_{\partial r})_{\phantom{\beta}\alpha}^{\beta}, \; \; \text{and}
  \\
g_{\alpha \beta} ' &= 2 \sffud{\gamma}{\alpha} g_{\gamma \beta},
\end{aligned}\right.
\end{equation}
where primes denote derivatives with respect to $r$ and $\riem_{\partial r}$ is the normal curvature operator that satisfies
$g( \riem_{\partial r} X, X) = \sec(\partial_r, X)$, for $X$ a $g$-unit vector.

We now prove estimates for the shape operator by analyzing the Riccati equation.  We begin by considering the following scalar differential inequality.
\begin{equation*} 
\left\{
\begin{aligned}
\lambda' + \lambda^2 &= 1 + O(e^{-ar}),  \\
\lambda(0) & > 0.
\end{aligned} \right.
\end{equation*}

\begin{lemma} \label{lemma:scal-riccati-est}  Suppose that $f \in L^\infty( [0,\infty) )$ such that there exists constants $\epsilon > 0$ and $J>0$ with
\[ \left\{
\begin{aligned}
 f &> \epsilon~\text{a.e.},\\
|f(r) - 1| &\leq J e^{-ar}~\text{a.e.},\end{aligned}\right. \]
where $a > 0$. Suppose further that $\lambda$ is a solution of the Riccati equation

\begin{equation}
\begin{aligned}
\lambda' + \lambda^2 &= f(r), \text{and } \nonumber \\
\lambda(0) & > 0.
\end{aligned}
\end{equation}
Then $\lambda$ is a positive Lipschitz function such that, for a positive constant $C = C(a, J, \lambda(0))$,
$$
\left\{
\begin{aligned}
 | \lambda - 1 | \leq C e^{-ar}\quad\text{if $a < 2$},\\
 | \lambda - 1 | \leq C (r+1) e^{-2r}\quad\text{if $a = 2$}\\
 | \lambda - 1 | \leq C e^{-2r}\quad\text{if $a > 2$}
\end{aligned}
\right.
$$
for all $r > 0$.
\end{lemma}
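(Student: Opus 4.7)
The plan has three stages: (i) global existence and coarse positive bounds for $\lambda$; (ii) a preliminary exponential decay of $\mu := \lambda - 1$ obtained from a linear rewriting of the Riccati equation; (iii) a bootstrap to the sharp rate via integration against $e^{2r}$. For stage (i), I would apply Carath\'eodory theory together with a barrier argument: the equation $\lambda' = f - \lambda^2$ with $\lambda(0) > 0$ has a unique local solution, and comparison with the constant barriers $l := \min(\lambda(0), \sqrt{\epsilon})$ and $L := \max(\lambda(0), \sqrt{\|f\|_\infty})$ yields $l \leq \lambda(r) \leq L$ for all $r \geq 0$ (above $L$ the right-hand side is negative, below $l$ it is positive). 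Boundedness of $\lambda$ and $\lambda'$ then implies the Lipschitz property.

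For stage (ii), setting $\mu := \lambda - 1$ and using $\lambda^2 - 1 = \mu(\lambda+1)$ recasts the Riccati equation as the linear (in $\mu$) equation
\[
\mu' + (\lambda + 1)\mu = f - 1.
\]
The integrating factor $\Phi(r) := \exp\!\left(\int_0^r (\lambda(s)+1)\,ds\right)$ satisfies $\Phi(r)/\Phi(s) \geq e^{(1+l)(r-s)}$, so the Duhamel formula yields
\[
|\mu(r)| \leq |\mu(0)|\,e^{-(1+l)r} + J\!\int_0^r e^{-as}\,e^{-(1+l)(r-s)}\,ds \leq C_0\, e^{-\alpha_0 r}
\]
for any fixed $\alpha_0 \in (0, \min(a, 1+l))$; in particular $\mu \to 0$.

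For stage (iii), expanding $(\lambda+1)\mu = 2\mu + \mu^2$ rewrites the equation as $\mu' + 2\mu = (f-1) - \mu^2$, and integration against $e^{2r}$ gives
\[
\mu(r) = \mu(0)e^{-2r} + e^{-2r}\!\int_0^r e^{2s}(f(s)-1)\,ds - e^{-2r}\!\int_0^r e^{2s}\mu(s)^2\,ds.
\]
Given an input bound $|\mu| \leq C e^{-\alpha r}$, the three terms on the right decay as $e^{-2r}$, as $e^{-\min(a,2)r}$ (with an extra $r$ factor when $a=2$), and as $e^{-\min(2\alpha, 2)r}$ (with an extra $r$ when $\alpha = 1$), respectively; the net upgrade is $\alpha \mapsto \min(a, 2\alpha, 2)$. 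Iterating this map from $\alpha_0$ reaches the saturation value $\min(a, 2)$ after finitely many steps. A final application of the integral identity, choosing the penultimate $\alpha$ slightly off unity to avoid the logarithmic factor in the nonlinear term, yields the three stated bounds; the factor $r+1$ in the borderline case $a=2$ arises from $\int_0^r 1\,ds = r$ applied to the $(f-1)$ term.

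The main obstacle throughout is the nonlinear term $\mu^2$, which the integral formula of stage (iii) cannot absorb without some preliminary decay; this is what makes the linearization of stage (ii) essential. The bootstrap then succeeds because doubling any positive decay rate eventually surpasses both $a$ and $2$, at which point $\mu^2$ becomes lower order than $f-1$ in the integrand.
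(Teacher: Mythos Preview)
Your proof is correct and, for the crucial preliminary step, takes a different route from the paper's.  For the case $0<a<2$ the paper argues by constructing explicit barrier functions $\lambda_\pm = 1 \pm C e^{-ar}$ and applying the comparison principle; the upper barrier is straightforward, but the lower barrier is delicate because of the quadratic term $C^2 e^{-2ar}$, and the authors must first prove $\lambda\to 1$ and then carefully tune both $C$ and a starting radius $R$ so that $\lambda_-$ becomes a genuine sub-solution on $[R,\infty)$ with $\lambda_-(R)<\lambda(R)$.  Your stage~(ii) bypasses this entirely: once the coarse lower bound $\lambda\ge l>0$ is in hand, the linear rewriting $\mu' + (\lambda+1)\mu = f-1$ together with the integrating factor immediately yields exponential decay of $\mu$ at any rate below $\min(a,1+l)$, with no barrier construction and no separate proof that $\lambda\to 1$.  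For $a\ge 2$ both arguments coincide: the paper also rewrites the equation as $\bigl[e^{2r}(\lambda-1)\bigr]' = e^{2r}(f-1) - e^{2r}(\lambda-1)^2$ and integrates, feeding in the $a<2$ estimate as preliminary decay, exactly as in your stage~(iii).

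What your approach buys is uniformity: all three regimes go through the single integral identity of stage~(iii), and the preliminary decay comes from a one-line Duhamel bound rather than a bespoke sub/super-solution argument.  In fact, because stage~(ii) already delivers a rate $\alpha_0$ that can be chosen strictly above $\max(a/2,1)$ whenever $a\ge 1$ (and above $a/2$ when $a<1$), a \emph{single} application of stage~(iii) suffices in every case; the iterated bootstrap you describe is correct but more than is needed.  The paper's barrier method, on the other hand, has the virtue of giving the sharp constant in the $a<2$ regime more transparently (one sees directly that $C \approx J/(2-a)$ works), at the cost of the more intricate lower-barrier argument.
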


\begin{proof} The proof we present here is different from \cite{Bahuaud}.
Our first task is to prove that $\lambda$ is a positive function. To this end, we select $\mu > 0$ such that $2 \mu^2 < \epsilon$ and $\lambda(0)>\mu$. We prove that $\lambda > \mu$. Assume that there exists a $r>0$ such that $\lambda(r)\leq\mu$, let $R=\inf\{r\in\bR^*_+|\lambda(r)\leq\mu\}$. The assumption $\lambda(0)>\mu$ and the continuity of $\lambda$ imply that $R > 0$, $\lambda(R) = \mu$ and that $\lambda(r)>\mu$ for all $r < R$.  Select $h > 0$ small enough such that $\lambda^2(r) \leq \frac{3}{2} \mu^2$ for all $r \in [R-h, R]$. Then

$$
\begin{aligned}
0 & \geq \lambda(R) - \lambda(R-h)\\
  & \geq \int_{R-h}^R \lambda'(r) dr\\
  & \geq \int_{R-h}^R \left( f(r) - \lambda^2(r)\right) dr\\
  & \geq \frac{1}{2} h \mu^2,
\end{aligned}
$$
a contradiction.  This proves that $\lambda > \mu > 0$.\\

We first concentrate on the case $0 < a < 2$. Denote $\lambda_\pm = 1 \pm C e^{-a r}$. If $C>0$ is large enough ($C \geq \frac{J}{2-a}$), $\lambda_+$ satisfies the following inequalities:

$$
\left\{
\begin{aligned}
    \lambda_+' + \lambda_+^2 = 1 + (2-a) C e^{-ar} + C^2 e^{-2ar} > f(r),\\
    \lambda_+(0) > \lambda(0).
\end{aligned}
\right.
$$

From here, it is easy to prove that $\lambda < \lambda_+$ on $\bR^*_+$ as claimed (see \cite{Bahuaud} for instance). The reverse inequality $\lambda \geq \lambda_-$ cannot be proven so easily due to the $C^2 e^{-2ar}$ term. We first show that $\lambda(r) \to_{r \to \infty} 1$. We have proven that $\lambda \leq \lambda_+$, so $\limsup \lambda \leq 1$. Select $\mu < \nu < 1$, recalling here that $\mu$ is such that $\lambda > \mu$.  If $R$ is large enough, $f(r) > \nu^2$ for all $r \geq R$. In particular, $\lambda > \lambda_\nu$ where $\lambda_\nu$ is the solution of the Riccati equation:

$$
\left\{
\begin{aligned}
\lambda_\nu' + \lambda_\nu^2 &= \nu^2, \text{and } \nonumber \\
\lambda_\nu(R) & = \mu.
\end{aligned}
\right.
$$
A straightforward calculation shows that $\lambda_\nu \to \nu$. This proves that $\liminf \lambda \geq \nu$. As $\nu \in (\mu, 1)$ is arbitrary we conclude that $\lambda \to 1$. Select $C > \frac{J}{2-a}$ and we remark that for $R > 0$ large enough

$$\forall r > R,\quad\lambda_-'(r) + \lambda_-^2(r) < 1 - J e^{-a r} \leq f(r).$$

The proof will be complete if we can prove that, by selecting a larger $C$, $\lambda(R) > \lambda_-(R)$. We want that $C$ and $R$ satisfy

$$
\left\{
\begin{array}{l}
    1 - (2-a) C e^{-a r} + C^2 e^{-2ar} = \lambda'_- (r) + \lambda_-^2(r) < 1 - J e^{-a r} \qquad \forall r > R,\\
    1 - C e^{-a R} = \lambda_-(R) \leq \sigma,
\end{array}
\right.
$$
where $\sigma$ is a constant such that $\sigma < \lambda(R)$. The second condition will be fulfilled if $C~e^{-aR}=1-\sigma$ (note that $R$ increases with $C$). We now rewrite the first constraint as:

$$C^2 < \left[(2-a) C - J\right] e^{a r},$$
this inequality is satisfied for all $r \geq R$ as long as it is satisfied for $r=R$ which we now assume. The equality $C e^{-a R} = 1-\epsilon'$ implies that this inequality can be rewritten:
$$(1-\sigma) C < \left[(2-a) C - J\right]$$
which is true for large $C$ provided that $1-\sigma < 2-a$. As $\lambda \to_{r \to \infty} 1$, we are free to choose $\sigma$ as close to $1$ as we want and, in particular, we can assume that the previous inequality is satisfied. This proves the estimate when $0 < a < 2$.

We now come to the case $a=2$.  The proof above no longer works.  Instead we rewrite the Riccati equation  as
\[ (\lambda-1)' + 2 (\lambda-1) = f-1 - (\lambda-1)^2, \]
so that
\[ \left[e^{2r} (\lambda-1)\right]' = e^{2r} (f-1) - e^{2r} (\lambda-1)^2. \]

By assumption $\left|e^{2r} (f-1)\right| \leq J$, and the previous estimate applied to an arbitrary $1 < a < 2$ proves that $\left|e^{2r}(\lambda-1)^2\right| \leq C(a, J, \lambda(0))^2 e^{-(2a-2)r}$.  These estimates can be used to show that $\left[e^{2r} (\lambda-1)\right]'$ is integrable over $(0, \infty)$:

\begin{eqnarray*}
e^{2r} (\lambda(r)-1) - (\lambda(0)-1)
    & = & \int_0^r \left[e^{2s} (\lambda(s)-1)\right]'ds\\
    & = & \int_0^r\left[e^{2r} (f-1) - e^{2r} (\lambda-1)^2\right]\\
\left|e^{2r} (\lambda(r)-1)\right|
    & \leq & |\lambda(0)-1| + J r + \frac{C(a, J, \lambda(0))^2}{2a-2}\\
    & \leq & (r+1) C(2, J, \lambda(0)),
\end{eqnarray*}
for $C(2, J, \lambda(0))$ large enough.\\

The case $a>2$ can be treated similarly.
\end{proof}

The proof of the following theorem follows the same method as its analogue in \cite{Bahuaud} using Lemma \ref{lemma:scal-riccati-est} for the basic scalar estimate.

\begin{theorem}[Comparison theorem]\label{thm:comparison}  Given curvature assumptions \eqref{NSC} and \eqref{AH0}, let $(y^{\beta}, r)$ be Fermi coordinates for $Y$ on $W \times [0,\infty)$ for an open set $W \subset Y$.  Let $\Lambda, \lambda$ denote the maximum and minimum eigenvalues of the shape operator over $W$, and let $\Omega, \omega$ denote the maximum and minimum eigenvalue of the metric over $W$ (taken with respect to the background euclidean metric).  There exist positive constants $C, L_1$ and $L_2$ depending on these eigenvalues such that for $r$ sufficiently large we have
\flushleft\textbf{ Shape operator estimate: } 
\begin{eqnarray*}
 (1 - C e^{-ar})  \; \kronecker{\beta}{\alpha} \; \leq \; \sffud{\beta}{\alpha}(y, r) \; \leq \; (1 + C e^{-ar}) \; \kronecker{\beta}{\alpha} , \; \mbox{for} \; 0 < a < 2,
\end{eqnarray*}
\begin{eqnarray*}
 (1 - C re^{-2r})  \; \kronecker{\beta}{\alpha}  \; \leq \; \sffud{\beta}{\alpha}(y, r) \; \leq \; (1 + C re^{-2r}) \; \kronecker{\beta}{\alpha} , \; \mbox{for} \; a =2.
\end{eqnarray*}
\textbf{ Metric estimate: }
\begin{eqnarray*}
L_1 e^{2r} \; \delta_{\alpha \beta} \leq g_{\alpha \beta}(y, r) \leq L_2 \; e^{2r} \; \delta_{\alpha \beta}.
\end{eqnarray*}
\end{theorem}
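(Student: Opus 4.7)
The plan is to reduce the matrix Riccati system \eqref{Riccati.System} to scalar Riccati inequalities satisfied by the extremal eigenvalues of the shape operator, apply Lemma \ref{lemma:scal-riccati-est} to conclude the shape operator estimate, and then integrate the tangential metric evolution using the resulting control on $\sff$.

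For the shape operator estimate, fix $y \in W$ and view $\sff(y, r)$ as a one-parameter family of $g$-symmetric endomorphisms of $T_y Y$ evolving by the first equation in \eqref{Riccati.System}. Let $\Lambda(r)$ and $\lambda(r)$ denote its largest and smallest eigenvalues. These are locally Lipschitz in $r$, and at a point $r_0$ of differentiability, choosing a $g$-unit eigenvector $X$ realizing $\Lambda(r_0)$ gives
\[ \Lambda'(r_0) + \Lambda^2(r_0) = -\riem(\partial_r, X, X, \partial_r), \]
so \eqref{AH0} yields $\Lambda' + \Lambda^2 \leq 1 + Ce^{-ar}$; symmetrically $\lambda' + \lambda^2 \geq 1 - Ce^{-ar}$. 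The convexity of $Y$ together with compactness of the reference covering gives uniform initial bounds $0 < \lambda_0 \leq \lambda(0) \leq \Lambda(0) \leq \Lambda_0$ independent of $y \in W$. Standard ODE comparison (the right-hand side $F(r) - u^2$ is monotone decreasing in $u > 0$) shows that $\lambda(r) \geq \bar\lambda(r)$ and $\Lambda(r) \leq \bar\Lambda(r)$, where $\bar\lambda, \bar\Lambda$ are solutions of the scalar Riccati equations $u' + u^2 = 1 \mp Ce^{-ar}$ with initial values $\lambda(0), \Lambda(0)$, respectively. Lemma \ref{lemma:scal-riccati-est} then gives $|\bar\Lambda - 1|, |\bar\lambda - 1| \leq C'e^{-ar}$ when $a < 2$, and the corresponding $C'(r+1)e^{-2r}$ estimate when $a=2$, proving the claimed eigenvalue bounds on $\sff$.

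For the metric estimate, introduce the rescaled matrix $\tilde G(r) := e^{-2r}G(r)$ where $G_{\alpha\beta} = g_{\alpha\beta}$. Using \eqref{LieDerivativeMetricEq} one computes
\[ \tilde G'_{\alpha\beta} = 2 \tilde G_{\alpha\gamma}(\sffud{\gamma}{\beta} - \kronecker{\gamma}{\beta}), \]
and the shape operator estimate bounds the operator norm of $\sff - I$ by $Ce^{-ar}$ (or $Cre^{-2r}$), which is integrable on $[0,\infty)$. Applying Gronwall's inequality to the largest and smallest eigenvalues of the symmetric matrix $\tilde G$, both stay within a multiplicative factor $\exp(C/a)$ (resp.\ $\exp(C\int_0^\infty re^{-2r}dr)$) of their initial values. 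Since $W$ lies in a compact reference chart on which $G(0)$ is uniformly equivalent to $\delta_{\alpha\beta}$, this yields the two-sided bound $L_1 e^{2r}\delta_{\alpha\beta} \leq g_{\alpha\beta}(y,r) \leq L_2 e^{2r}\delta_{\alpha\beta}$.

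The main technical obstacle is the reduction from the matrix Riccati system to scalar Riccati inequalities: one must justify the eigenvalue inequalities in the Lipschitz/subdifferential sense at points where eigenvalues may collide, which is standard but requires care. A secondary concern is to keep the constants $C, L_1, L_2$ depending only on the compact data of the essential subset and reference covering of $Y$, rather than on $W$ or $y$; this is ensured by the uniform choice of covering from \cite{BahuaudMarsh} recalled above.
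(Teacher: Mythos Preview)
Your proposal is correct and follows essentially the same approach as the paper, which merely states that the proof ``follows the same method as its analogue in \cite{Bahuaud} using Lemma~\ref{lemma:scal-riccati-est} for the basic scalar estimate.'' Your elaboration---reducing the matrix Riccati equation to scalar inequalities for the extremal eigenvalues of $\sff$, invoking Lemma~\ref{lemma:scal-riccati-est}, and then integrating \eqref{LieDerivativeMetricEq} via Gronwall for the metric bound---is precisely that method, and you have correctly identified the one subtlety (differentiability of eigenvalues at crossings) and the need to start at $r$ large enough so that $1-Ce^{-ar}>\epsilon>0$ before applying the lemma.
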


% Analysis of the Riccati system
\section{Analysis of the Riccati system}
\label{section:riccati-analysis}

In this lengthy section we analyze systems of differential equations that arise from derivatives of the Riccati system \eqref{Riccati.System} and prove Theorems \ref{thm:A-part1} and \ref{thm:A-part2}.  We begin by deriving these systems of differential equations.  We then use the curvature hypothesis to estimate the various coefficients that appear in the system.  Proceeding in two iterations we compare the Riccati system to a model system with well understood asymptotics to obtain estimates for the compactified metric.  In the last part of this section we translate these decay estimates to H\"older estimates for the metric and prove Theorems \ref{thm:A-part1} and \ref{thm:A-part2}.

Fix an essential subset $K$, and set $\gbar = e^{-2r} g$.  In Fermi coordinates on $W \times [0,\infty)$, where $W$ is a sufficiently small open ball (see page \pageref{choosingW}), we may write
\[ g = dr^2 + g_{\alpha \beta}(y,r) dy^{\alpha} dy^{\beta}. \]

We set $\rho := e^{-r}$, and remind the reader of the convention given on page \pageref{page:greek} that $\rho$ does not count as a tangential or Greek variable.  In these `compactified Fermi coordinates', $(y^{\beta},\rho)$ over $W \times (0, 1]$, we now have
\[ g = \frac{d\rho^2}{\rho^2} + g_{\alpha \beta}(y, - \log \rho) dy^{\alpha} dy^{\beta}, \]
and consequently
\[ \gbar = d\rho^2 + \rho^2 g_{\alpha \beta}(y, - \log \rho) dy^{\alpha} dy^{\beta}. \]

We work out the first derivatives of $\gbar$:
\begin{equation} \label{eqn:gbar.normal.derivative}
\begin{aligned}
\partial_{\rho} \gbar_{\alpha \beta} &= 2 \rho g_{\alpha \beta} + \rho^2 \partial_r g_{\alpha \beta} \cdot \left(-\frac{1}{\rho}\right) \\ 
& = 2 \rho^{-1} (\delta^{\gamma}_{\alpha}-\sffud{\gamma}{\alpha}) \gbar_{\gamma \beta}, \\
\partial_{\mu} \gbar_{\alpha \beta} &= \rho^2 \partial_{\mu} g_{\alpha \beta}.
\end{aligned}
\end{equation}

We now take tangential derivatives of the Riccati system \eqref{Riccati.System}.  First observe
\begin{equation} \label{Tangential.System} 
\left\{ \begin{aligned}
 (\partial_{\mu} \sffud{\beta}{\alpha})' &= -(\partial_{\mu} \sffud{\beta}{\gamma}) \sffud{\gamma}{\alpha} - \sffud{\beta}{\gamma} (\partial_{\mu} \sffud{\gamma}{\alpha}) -\partial_{\mu} \riemdudd{0}{\beta}{\alpha}{0}, \\
 (\partial_{\mu} g_{\alpha \beta})' &= 2 (\partial_{\mu} \sffud{\gamma}{\alpha}) g_{\gamma \beta} + 2 \sffud{\gamma}{\alpha} (\partial_{\mu} g_{\gamma \beta}).
\end{aligned}
\right.
\end{equation}
In order to use the estimate for the shape operator, we rescale this system.  Set $W = e^{2r} \sff$, $\gbar = e^{-2r} g$.  The system becomes
\begin{equation} \label{Transformed.Tangential.System}
\left\{ \begin{aligned}
 (\partial_{\mu} \Wsffud{\beta}{\alpha})' & = - (\partial_{\mu} \Wsffud{\beta}{\gamma}) \sffud{\gamma}{\alpha} - \sffud{\beta}{\gamma} (\partial_{\mu} \Wsffud{\gamma}{\alpha}) + 2 \partial_{\mu} \Wsffud{\beta}{\alpha} - e^{2r}\partial_{\mu} \riemdudd{0}{\beta}{\alpha}{0} , \\
(\partial_{\mu} \gbar_{\alpha \beta})' & = 2 e^{-2r} (\partial_{\mu} \Wsffud{\gamma}{\alpha}) \gbar_{\gamma \beta} + 2 \sffud{\gamma}{\alpha} (\partial_{\mu} \gbar_{\gamma \beta}) - 2 (\partial_{\mu} \gbar_{\alpha \beta}).
\end{aligned}
\right.
\end{equation}

We also need equations for the second derivatives of the metric.

We have
\[ \partial^2_{\rho} \gbar_{\alpha \beta} = -2 \rho^{-2} (\delta^{\gamma}_{\alpha}-\sffud{\gamma}{\alpha}) \gbar_{\gamma \beta} + \rho^{-2} (\partial_r \sffud{\gamma}{\alpha}) \gbar_{\gamma \beta} + \rho^{-1} (\delta^{\gamma}_{\alpha}-\sffud{\gamma}{\alpha}) \partial_{\rho} \gbar_{\gamma \beta}.\]

Taking a tangential derivative of equation  \eqref{eqn:gbar.normal.derivative} yields
\begin{equation*}
\partial_{\mu} \partial_{\rho} \gbar_{\alpha \beta} = -2 \rho^{-1} (\partial_{\mu} \sffud{\gamma}{\alpha}) \gbar_{\gamma \beta} + 2 \rho^{-1} (\delta^{\gamma}_{\alpha}-\sffud{\gamma}{\alpha}) \partial_{\mu} \gbar_{\gamma \beta}.
\end{equation*}

The second tangential derivative of the Riccati system after the same rescaling as above is
\begin{equation} \label{Transformed.Second.Tangential.System}
\left\{ \begin{aligned}
 (\partial_{\nu} \partial_{\mu} \Wsffud{\beta}{\alpha})' & = - (\partial_{\nu}\partial_{\mu} \Wsffud{\beta}{\gamma}) \sffud{\gamma}{\alpha} - \sffud{\beta}{\gamma} (\partial_{\nu} \partial_{\mu} \Wsffud{\gamma}{\alpha}) + 2 \partial_{\nu} \partial_{\mu} \Wsffud{\beta}{\alpha} \\  
 &-e^{-2r}(\partial_{\mu} \Wsffud{\beta}{\gamma}) (\partial_{\nu} \Wsffud{\gamma}{\alpha}) - e^{-2r}(\partial_{\nu} \Wsffud{\beta}{\gamma}) (\partial_{\mu} \Wsffud{\gamma}{\alpha}) - e^{2r} \partial_{\nu} \partial_{\mu} \riemdudd{0}{\beta}{\alpha}{0}\\
(\partial_{\nu} \partial_{\mu} \gbar_{\alpha \beta})' & = 2 e^{-2r} (\partial_{\nu} \partial_{\mu} \Wsffud{\gamma}{\alpha}) \gbar_{\gamma \beta} + 2 \sffud{\gamma}{\alpha} (\partial_{\nu} \partial_{\mu} \gbar_{\gamma \beta}) - 2 (\partial_{\mu} \gbar_{\alpha \beta}) \\
&+ 2 e^{-2r}( \partial_{\mu} \Wsffud{\gamma}{\alpha})( \partial_{\nu}\gbar_{\gamma \beta}) + 2 e^{-2r} (\partial_{\nu} \Wsffud{\gamma}{\alpha}) (\partial_{\nu}\partial_{\mu} \gbar_{\gamma \beta}).
\end{aligned}
\right.
\end{equation}

The missing ingredient before we can begin an analysis in both systems \eqref{Transformed.Tangential.System} and \eqref{Transformed.Second.Tangential.System} is an estimate for the coordinate derivatives of curvature.  We can obtain estimates on these terms from the estimates \eqref{AH1} and \eqref{AH2} of the covariant derivatives of curvature and the addition of terms that couple derivatives of the metric into the equations.  The following lemmas provide these estimates.

\begin{lemma} \label{lemma:est.covg0gg0}
\begin{equation*}
 \partial_{\mu} \riemdudd{0}{\beta}{\alpha}{0} = - \Gamma^{\beta}_{\sigma \mu} (\riemdudd{0}{\sigma}{\alpha}{0} + \kronecker{\sigma}{\alpha}) + \Gamma^{\sigma}_{\alpha \mu} (\riemdudd{0}{\beta}{\sigma}{0} + \kronecker{\beta}{\sigma}) + O( e^{(1-a)r} )
\end{equation*}
\end{lemma}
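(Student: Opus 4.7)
The strategy is to express $\partial_\mu \riemdudd{0}{\beta}{\alpha}{0}$ as the covariant derivative $\nabla_\mu \riemdudd{0}{\beta}{\alpha}{0}$ plus the usual sum of Christoffel corrections, and then to isolate the two correction terms whose factors of $\Gamma$ are not a priori small from those whose size is already controlled by the assumed curvature decay. This is essentially a bookkeeping exercise in Fermi coordinates, but the final rewriting in terms of the combinations $\riemdudd{0}{\sigma}{\alpha}{0}+\kronecker{\sigma}{\alpha}$ relies on a small algebraic trick.

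Starting point: the defining formula for the covariant derivative of a $(1,3)$-tensor with index pattern $(\text{down},\text{up},\text{down},\text{down})$ gives
\begin{equation*}
\nabla_\mu \riemdudd{0}{\beta}{\alpha}{0} = \partial_\mu \riemdudd{0}{\beta}{\alpha}{0} - \Gamma^f_{\mu 0} \riemdudd{f}{\beta}{\alpha}{0} + \Gamma^\beta_{\mu f} \riemdudd{0}{f}{\alpha}{0} - \Gamma^f_{\mu\alpha} \riemdudd{0}{\beta}{f}{0} - \Gamma^f_{\mu 0} \riemdudd{0}{\beta}{\alpha}{f}.
\end{equation*}
Solving for $\partial_\mu \riemdudd{0}{\beta}{\alpha}{0}$ and splitting each summation index $f$ into its normal ($f=0$) and tangential ($f=\sigma$) parts, the Fermi-coordinate identities $\Gamma^0_{\mu 0}=0$, $\Gamma^\sigma_{\mu 0}=\sffud{\sigma}{\mu}$, together with the algebraic vanishings $\riemdudd{0}{0}{\alpha}{0}=\riemdudd{0}{\beta}{0}{0}=0$ (first two and last two index antisymmetries of $\riem$), collapse the formula to
\begin{equation*}
\partial_\mu \riemdudd{0}{\beta}{\alpha}{0} = \nabla_\mu \riemdudd{0}{\beta}{\alpha}{0} + \sffud{\sigma}{\mu}\riemdudd{\sigma}{\beta}{\alpha}{0} - \Gamma^\beta_{\mu\sigma}\riemdudd{0}{\sigma}{\alpha}{0} + \Gamma^\sigma_{\mu\alpha}\riemdudd{0}{\beta}{\sigma}{0} + \sffud{\sigma}{\mu}\riemdudd{0}{\beta}{\alpha}{\sigma}.
\end{equation*}

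The three terms $\nabla_\mu \riemdudd{0}{\beta}{\alpha}{0}$, $\sffud{\sigma}{\mu}\riemdudd{\sigma}{\beta}{\alpha}{0}$, and $\sffud{\sigma}{\mu}\riemdudd{0}{\beta}{\alpha}{\sigma}$ are all $O(e^{(1-a)r})$: the first directly from \eqref{AH1} via the list of curvature estimates on page \pageref{pg:curvests}, and the other two from the mixed-component estimates $\riemdudd{\sigma}{\beta}{\alpha}{0},\riemdudd{0}{\beta}{\alpha}{\sigma}=O(e^{(1-a)r})$ combined with the boundedness of $\sffud{\sigma}{\mu}$ supplied by Theorem \ref{thm:comparison}. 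To place the remaining two terms into the exact form stated in the lemma, substitute $\riemdudd{0}{\sigma}{\alpha}{0}=(\riemdudd{0}{\sigma}{\alpha}{0}+\kronecker{\sigma}{\alpha})-\kronecker{\sigma}{\alpha}$ and analogously for $\riemdudd{0}{\beta}{\sigma}{0}$; the two leftover contributions $\mp\Gamma^\beta_{\mu\alpha}$ cancel exactly, leaving precisely the pair $-\Gamma^\beta_{\sigma\mu}(\riemdudd{0}{\sigma}{\alpha}{0}+\kronecker{\sigma}{\alpha})$ and $\Gamma^\sigma_{\alpha\mu}(\riemdudd{0}{\beta}{\sigma}{0}+\kronecker{\beta}{\sigma})$ after invoking symmetry of the Christoffel symbols.

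I do not anticipate any serious obstacle: the derivation is purely algebraic. The only point that deserves care is this last cancellation of the $\Gamma^\beta_{\mu\alpha}$ pieces, since at this stage of the paper the tangential Christoffel symbols have not yet been shown to be small and therefore cannot simply be swept into the $O(e^{(1-a)r})$ remainder; it is precisely this cancellation that forces the lemma's asymmetric-looking grouping of terms.
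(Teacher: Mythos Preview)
Your proof is correct and is precisely the computation the paper has in mind: the paper omits the argument here as ``straightforward'' and defers to \cite{Bahuaud}, but the identical method (write $\partial=\nabla+\Gamma*\riem$, split Latin indices into normal and tangential parts, estimate the harmless pieces, and regroup the two dangerous $\Gamma^{\text{tang}}_{\text{tang,tang}}$ terms via the $\pm\kronecker{\sigma}{\alpha}$ cancellation) is carried out explicitly in the proofs of Lemmas~\ref{lemma:est.covgggg0} and~\ref{lemma:est.sec.cov.der}. Your closing remark about why the cancellation is essential---namely that the purely tangential Christoffel symbols are not yet known to be small---is exactly the point.
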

The proof is straightforward.  See \cite{Bahuaud} for a proof.

\begin{lemma} \label{lemma:est.cov00gg0}
\begin{equation*}
 \partial_{0} \riemdudd{0}{\beta}{\alpha}{0} = - \Gamma^{\beta}_{\sigma \mu} (\riemdudd{0}{\sigma}{\alpha}{0} + \kronecker{\sigma}{\alpha}) + \Gamma^{\sigma}_{\alpha \mu} (\riemdudd{0}{\beta}{\sigma}{0} + \kronecker{\beta}{\sigma} ) + O( e^{-ar} )
\end{equation*}
\end{lemma}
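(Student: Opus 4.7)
The plan is to mirror the proof of Lemma \ref{lemma:est.covg0gg0} but in the normal direction, exploiting the especially simple structure of Fermi coordinates along $\partial_0$. One starts from the standard identity converting the coordinate derivative into the covariant derivative plus Christoffel corrections:
\[
\partial_0 \riemdudd{0}{\beta}{\alpha}{0} = \nabla_0 \riemdudd{0}{\beta}{\alpha}{0} - \Gamma^0_{0e} \riemuddd{e}{\beta}{\alpha}{0} + \Gamma^e_{0\beta} \riemdudd{0}{e}{\alpha}{0} + \Gamma^e_{0\alpha} \riemdudd{0}{\beta}{e}{0} + \Gamma^e_{00} \riemdudd{0}{\beta}{\alpha}{e},
\]
and then enumerates which correction terms survive.

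In Fermi coordinates the identities $\Gamma^0_{00} = \Gamma^0_{0\sigma} = \Gamma^\sigma_{00} = 0$ eliminate the first, fourth (for $e=0$) and fifth summands. The only surviving Christoffels carry the form $\Gamma^\sigma_{0\alpha} = \sffud{\sigma}{\alpha}$, i.e. components of the shape operator. Combined with the symmetries $\riemdudd{0}{0}{\alpha}{0} = 0$ and $\riemdudd{0}{\beta}{0}{0} = 0$, the formula collapses to
\[
\partial_0 \riemdudd{0}{\beta}{\alpha}{0} = \nabla_0 \riemdudd{0}{\beta}{\alpha}{0} - \sffud{\beta}{\sigma} \riemdudd{0}{\sigma}{\alpha}{0} + \sffud{\sigma}{\alpha} \riemdudd{0}{\beta}{\sigma}{0}.
\]
To bring this into the stated form (interpreting the free index appearing in the Christoffels as the normal index $0$, so that $\Gamma^\beta_{\sigma 0} = \sffud{\beta}{\sigma}$ and $\Gamma^\sigma_{\alpha 0} = \sffud{\sigma}{\alpha}$ by symmetry of the Christoffel symbols), I would add and subtract $\sffud{\beta}{\alpha}$, noting that this single term cancels between the two groups, so that each product can be rewritten in the compact form $-\Gamma^\beta_{\sigma 0}(\riemdudd{0}{\sigma}{\alpha}{0} + \kronecker{\sigma}{\alpha}) + \Gamma^\sigma_{\alpha 0}(\riemdudd{0}{\beta}{\sigma}{0} + \kronecker{\beta}{\sigma})$.

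It then remains to estimate $\nabla_0 \riemdudd{0}{\beta}{\alpha}{0}$, which is $O(e^{-ar})$ by hypothesis \eqref{AH1} applied in the $\partial_r$ direction (precisely the bound recorded on page \pageref{pg:curvests}). The key improvement over Lemma \ref{lemma:est.covg0gg0} (error $O(e^{-ar})$ in place of $O(e^{(1-a)r})$) stems from two independent gains: the normal covariant derivative estimate carries one extra factor of $e^{-r}$ compared with its tangential counterpart, and the surviving Christoffels are shape-operator components already pinned to $\kronecker{}{} + O(e^{-ar})$ by Theorem \ref{thm:comparison}, rather than tangential Christoffels $\Gamma^\sigma_{\mu\nu}$, which involve the tangential metric growing like $e^{2r}$. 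The only real subtlety is the index bookkeeping, ensuring the final algebraic form matches the statement verbatim so that both lemmas can be substituted uniformly into the transformed Riccati systems of Section \ref{section:riccati-analysis}.
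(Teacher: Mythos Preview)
Your approach is exactly the ``straightforward computation'' the paper alludes to: expand $\nabla_0\riemdudd{0}{\beta}{\alpha}{0}$ in Fermi coordinates, use $\Gamma^f_{00}=\Gamma^0_{0\sigma}=0$ and $\Gamma^\sigma_{0\tau}=\sffud{\sigma}{\tau}$ to reduce to $\partial_0\riemdudd{0}{\beta}{\alpha}{0}=\nabla_0\riemdudd{0}{\beta}{\alpha}{0}-\sffud{\beta}{\sigma}\riemdudd{0}{\sigma}{\alpha}{0}+\sffud{\sigma}{\alpha}\riemdudd{0}{\beta}{\sigma}{0}$, then insert the Kronecker deltas (which cancel pairwise) and invoke \eqref{AH1}. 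You also correctly diagnose that the free index $\mu$ in the statement is a typographical carry-over from Lemma~\ref{lemma:est.covg0gg0} and should be read as $0$; note only that your first displayed identity has a small index/sign slip in the second correction term (it should read $-\Gamma^{\beta}_{0f}\riemdudd{0}{f}{\alpha}{0}$ rather than $+\Gamma^{e}_{0\beta}\riemdudd{0}{e}{\alpha}{0}$), though this is immaterial since you land on the correct simplified formula.
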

The proof follows by straightforward computation.

The next two lemmas include initial estimates for the first tangential derivatives of the metric and shape operator as hypotheses.  Such estimates are available on the second iteration of the overall argument.  We assume $\gbar$ is $C^{0,1}$ which corresponds to our eventual application when the curvature decay $a > 1$.

\begin{lemma} \label{lemma:est.covgggg0} If $\gbar$ is $C^{0,1}$, then
\begin{align*} 
\partial_{\mu} \riemdudd{\lambda}{\beta}{\alpha}{0} &= O(e^{(2-a)r}).
\end{align*}
\end{lemma}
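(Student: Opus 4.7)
The plan is to pass from the coordinate derivative to the covariant derivative and to track where the potentially dangerous $O(e^{2r})$ pieces cancel. Using the Christoffel symbol formula for covariant derivatives,
\[
\partial_{\mu} \riemuddd{\lambda}{\beta}{\alpha}{0}
= \nabla_{\mu} \riemuddd{\lambda}{\beta}{\alpha}{0}
- \Gamma^{\lambda}_{\mu k} \riemuddd{k}{\beta}{\alpha}{0}
+ \Gamma^{k}_{\mu \beta} \riemuddd{\lambda}{k}{\alpha}{0}
+ \Gamma^{k}_{\mu \alpha} \riemuddd{\lambda}{\beta}{k}{0}
+ \Gamma^{k}_{\mu 0} \riemuddd{\lambda}{\beta}{\alpha}{k},
\]
where $k$ runs over $\{0, 1, \dots, n\}$. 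The direct bound $\nabla_\mu \riemuddd{\lambda}{\beta}{\alpha}{0} = O(e^{(2-a)r})$ follows from \eqref{AH1} together with the component-size rules recorded on page \pageref{pg:curvests} (an upper tangential index carries a factor $e^{-r}$ and a lower tangential index a factor $e^{r}$, while normal indices are harmless).

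The first step is to estimate the Christoffel symbols. In Fermi coordinates the only non-zero symbols are $\Gamma^{0}_{\alpha\beta}=-\sffdd{\alpha}{\beta}$, $\Gamma^{\gamma}_{\alpha 0}=\sffud{\gamma}{\alpha}$, and the purely tangential $\Gamma^{\gamma}_{\alpha\beta}$. Theorem~\ref{thm:comparison} gives $\sffud{\gamma}{\alpha}=\delta^{\gamma}_{\alpha}+O(e^{-ar})$, so $\sffdd{\alpha}{\beta}=g_{\alpha\beta}+O(e^{(2-a)r})=O(e^{2r})$. The $\mathcal{C}^{0,1}$ assumption on $\gbar=e^{-2r}g$ yields $\partial_{\mu} g_{\alpha\beta}=e^{2r}\partial_\mu \gbar_{\alpha\beta}=O(e^{2r})$, hence $\Gamma^{\gamma}_{\alpha\beta}(g)=O(1)$.

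The second step is to classify the correction terms. Those involving only tangential Christoffels and a tangential contraction index, i.e.\ the contributions where $k$ is tangential in the first three sums, pair an $O(1)$ Christoffel with a component $\riemuddd{k}{\beta}{\alpha}{0}$ or $\riemuddd{\lambda}{k}{\alpha}{0}$ which is $O(e^{(1-a)r})$ by the estimates recorded on page~\pageref{pg:curvests} (the corresponding constant-curvature component vanishes). These terms are therefore $O(e^{(1-a)r})$, which is absorbed into $O(e^{(2-a)r})$.

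The main obstacle is the three potentially large pieces
\[
-\Gamma^{\lambda}_{\mu 0}\,\riemuddd{0}{\beta}{\alpha}{0}, \quad
\Gamma^{0}_{\mu \beta}\,\riemuddd{\lambda}{0}{\alpha}{0}, \quad
\sum_{k\text{ tang.}}\Gamma^{k}_{\mu 0}\,\riemuddd{\lambda}{\beta}{\alpha}{k},
\]
each of which is individually $O(e^{2r})$. The trick is to write $\riem=-\riemcons+(\riem+\riemcons)$ and use \eqref{AH0} to absorb the $(\riem+\riemcons)$ part into the desired error $O(e^{(2-a)r})$; the $(\Gamma^0_{\mu\alpha})\riemuddd{\lambda}{\beta}{0}{0}$ term vanishes by antisymmetry of $\riem$. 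For the $-\riemcons$ parts, a direct computation using $\riemcons_{abcd}=g_{ad}g_{bc}-g_{ac}g_{bd}$, $\sffud{\gamma}{\alpha}=\delta^{\gamma}_{\alpha}+O(e^{-ar})$, and $\sffdd{\alpha}{\beta}=g_{\alpha\beta}+O(e^{(2-a)r})$ shows that the three leading contributions are
\[
\delta^{\lambda}_{\mu}g_{\beta\alpha},\qquad -\delta^{\lambda}_{\alpha}g_{\mu\beta},\qquad -\delta^{\lambda}_{\mu}g_{\beta\alpha}+\delta^{\lambda}_{\alpha}g_{\beta\mu},
\]
whose sum vanishes by the symmetry of $g$. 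Collecting everything gives the claimed estimate. The essential bookkeeping here is precisely this cancellation; all other work is a mechanical application of Theorem~\ref{thm:comparison}, \eqref{AH0}, \eqref{AH1}, and the $\mathcal{C}^{0,1}$ assumption on $\gbar$.
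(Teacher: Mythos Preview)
Your argument is correct and follows essentially the same route as the paper: express the coordinate derivative via the covariant derivative plus Christoffel corrections, observe that the purely tangential Christoffel symbols are $O(1)$ because $\gbar$ is Lipschitz, and then check that the three potentially large shape-operator corrections cancel at leading order after writing $\riem=-\riemcons+(\riem+\riemcons)$ and using $\sffud{\sigma}{\mu}=\kronecker{\sigma}{\mu}+O(e^{-ar})$, $\sffdd{\mu}{\beta}=g_{\mu\beta}+O(e^{(2-a)r})$. One cosmetic remark: the lemma is stated for $\riemdudd{\lambda}{\beta}{\alpha}{0}$ (first index down, second up), while your expansion is written for $\riemuddd{\lambda}{\beta}{\alpha}{0}$ (first index up); the argument and the $O(e^{(2-a)r})$ conclusion are of course identical after swapping the roles of $\lambda$ and $\beta$, but the three displayed leading terms and their cancellation look slightly different from the paper's version for that reason.
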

\begin{proof}
We begin with the standard formula relating covariant and coordinate derivatives:
\[ \nabla_{\mu} \riemdudd{\lambda}{\beta}{\alpha}{0}= \partial_{\mu}  \riemdudd{\lambda}{\beta}{\alpha}{0} - \Gamma^{s}_{\lambda \mu} \riemdudd{s}{\beta}{\alpha}{0} + \Gamma^{\beta}_{ s\mu} \riemdudd{\lambda}{s}{\alpha}{0} - \Gamma^{\sigma}_{\alpha \mu} \riemdudd{\lambda}{\beta}{\sigma}{0} - \Gamma^{\sigma}_{0 \mu} \riemdudd{\lambda}{\beta}{\alpha}{\sigma}.\]
Note the placement of the Greek index $\sigma$ over the Latin index $s$ in some of the contractions above are due to form of Christoffel symbols in Fermi coordinates.

We obtain
\begin{align} \label{eqn:locref1}
\partial_{\mu} \riemdudd{\lambda}{\beta}{\alpha}{0} &= \nabla_{\mu} \riemdudd{\lambda}{\beta}{\alpha}{0} + \Gamma^{s}_{\lambda \mu} \riemdudd{s}{\beta}{\alpha}{0} - \Gamma^{\beta}_{ s\mu} \riemdudd{\lambda}{s}{\alpha}{0} + \Gamma^{\sigma}_{\alpha \mu} \riemdudd{\lambda}{\beta}{\sigma}{0} + \Gamma^{\sigma}_{0 \mu} \riemdudd{\lambda}{\beta}{\alpha}{\sigma} \nonumber \\
&= \nabla_{\mu} \riemdudd{\lambda}{\beta}{\alpha}{0} + \Gamma^{\sigma}_{\lambda \mu} \riemdudd{\sigma}{\beta}{\alpha}{0} - \Gamma^{\beta}_{\sigma\mu} \riemdudd{\lambda}{\sigma}{\alpha}{0} + \Gamma^{\sigma}_{\alpha \mu} \riemdudd{\lambda}{\beta}{\sigma}{0} \\
&+ \Gamma^{\sigma}_{0 \mu} \riemdudd{\lambda}{\beta}{\alpha}{\sigma} + \Gamma^{0}_{\lambda \mu} \riemdudd{0}{\beta}{\alpha}{0}- \Gamma^{\beta}_{0\mu} \riemdudd{\lambda}{0}{\alpha}{0} \nonumber
\end{align}
 
We consider the last three terms in \eqref{eqn:locref1}.  We use estimate \eqref{AH0} and the fact that in Fermi coordinates $\Gamma^{0}_{\lambda \mu} = -\sffdd{\lambda}{\mu}$ and $\Gamma^{\sigma}_{0 \mu} = \sffud{\sigma}{\mu}$, to obtain  
\begin{align*}
\Gamma^{\sigma}_{0 \mu} \riemdudd{\lambda}{\beta}{\alpha}{\sigma} &+ \Gamma^{0}_{\lambda \mu} \riemdudd{0}{\beta}{\alpha}{0}- \Gamma^{\beta}_{0\mu} \riemdudd{\lambda}{0}{\alpha}{0} \\
& = \sffud{\sigma}{\mu} \riemdudd{\lambda}{\beta}{\alpha}{\sigma} -\sffdd{\lambda}{\mu} \riemdudd{0}{\beta}{\alpha}{0}
 + \sffud{\beta}{\mu} \riemuddd{0}{\lambda}{\alpha}{0} \\
&= \sffud{\sigma}{\mu} ( g_{\lambda \sigma} \kronecker{\beta}{\alpha} - g_{\lambda \alpha} \kronecker{\beta}{\sigma}) -\sffdd{\lambda}{\mu} \kronecker{\beta}{\alpha} + \sffud{\beta}{\mu} g_{\lambda \alpha} +  O(e^{-ar}) \\
&= O(e^{-ar})
\end{align*}

Note that for all Greek indices, $\Gamma^{\sigma}_{\alpha \mu} = \Gambar^{\sigma}_{\alpha \mu} = O(1)$ by assumption that $\gbar$ is Lipschitz.  Consequently, the three terms in \eqref{eqn:locref1} like $\Gamma^{\sigma}_{\alpha \mu} \riemdudd{\lambda}{\beta}{\sigma}{0} = O(e^{(1-a)r})$.

Collecting these estimates we find,
\begin{align*}
\partial_{\mu} \riemdudd{\lambda}{\beta}{\alpha}{0} &= \nabla_{\mu} \riemdudd{\lambda}{\beta}{\alpha}{0} + O(e^{(1-a)r}) + O(e^{-ar}) \nonumber\\
&= O(e^{(2-a)r})+ O(e^{(1-a)r}) + O(e^{-ar}) \nonumber \\
&= O(e^{(2-a)r}).
\end{align*}
\end{proof} 

\begin{lemma} \label{lemma:est.sec.cov.der} If $\gbar$ is $C^{0,1}$ and the tangential derivatives of the shape operator satisfy $\partial_{\mu} \sffud{\beta}{\alpha} = O(1)$, then
\begin{align*}
\partial_{\nu} \partial_{\mu} \riemdudd{0}{\beta}{\alpha}{0} &= (\partial \gbar * (\riem+\delta))_{\phantom{\beta}\alpha}^{\beta} + O( e^{(2-a)r} ),
\end{align*}
where $\partial \gbar * (\riem+\delta)$ denotes terms that are bounded coefficients multiplied by contractions of first tangential derivatives of $\gbar$ and $\riem+\delta$.
\end{lemma}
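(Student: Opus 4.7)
The plan is to mimic the proof of Lemma \ref{lemma:est.covg0gg0} one derivative order higher, by applying $\partial_\nu$ to the standard identity relating coordinate and covariant derivatives. Writing $R = \riemdudd{0}{\beta}{\alpha}{0}$, we have schematically
\[\partial_\nu \partial_\mu R = \nabla_\nu \nabla_\mu R + (\partial_\nu \Gamma)\cdot R + \Gamma \cdot \nabla R + \Gamma \cdot \Gamma \cdot R + \Gamma \cdot \partial_\nu R,\]
with free indices contracted by the usual covariant-derivative expansion. The term $\Gamma \cdot \partial_\nu R$ is rewritten via Lemma \ref{lemma:est.covg0gg0}, which contributes further $\Gamma\cdot\Gamma\cdot(R+\delta)$ and $\Gamma\cdot O(e^{(1-a)r})$ pieces.

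The ``easy'' terms proceed as follows. The leading term $\nabla_\nu \nabla_\mu R$ is $O(e^{(2-a)r})$ directly from the estimate listed on page \pageref{pg:curvests}. The Christoffel symbols are bounded in Fermi coordinates: $\sffud{\gamma}{\alpha} = O(1)$ by Theorem \ref{thm:comparison}, and the tangential Christoffels $\overline{\Gamma}^\sigma_{\alpha\beta}$ coincide with those of $\gbar$ by a conformal calculation and are bounded by the Lipschitz hypothesis on $\gbar$. Thus $\Gamma \cdot \nabla R$ reduces to $O(e^{(2-a)r})$ using Lemma \ref{lemma:est.covgggg0} together with the known first-covariant-derivative estimates. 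For $\Gamma \cdot \Gamma \cdot R$, we split $R = (R + \kronecker{\beta}{\alpha}) - \kronecker{\beta}{\alpha}$: the piece with $R + \delta = O(e^{-ar})$ is absorbed into $O(e^{(2-a)r})$, while the $-\delta$ piece consists of pure Christoffel products whose index contractions reduce algebraically to either bounded tensorial quantities matching $\partial\gbar*(\riem+\delta)$ or to terms that cancel analogous contributions arising from $\Gamma\cdot\partial_\nu R$.

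The main obstacle is the $(\partial_\nu \Gamma)\cdot R$ contribution. For the shape-operator Christoffels $\Gamma^0_{\alpha\beta} = -\sffdd{\alpha}{\beta}$ and $\Gamma^\sigma_{0\alpha} = \sffud{\sigma}{\alpha}$, the hypothesis $\partial_\mu \sffud{\beta}{\alpha} = O(1)$ combined with $\partial_\nu g_{\alpha\gamma} = e^{2r}\partial_\nu \gbar_{\alpha\gamma}$ produces expressions of bounded coefficient times first tangential derivatives of $\gbar$; paired with $(R+\delta)$ from the splitting $R = (R+\delta)-\delta$, this yields exactly the announced ``$\partial\gbar * (\riem+\delta)$'' structure, while the $-\delta$ piece is absorbed into $O(e^{(2-a)r})$ after algebraic simplification. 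The tangential Christoffel derivative $\partial_\nu \overline{\Gamma}^\sigma_{\alpha\beta}$ is genuinely second-order in $\gbar$ and cannot be bounded in isolation; however, because it is coupled only with the small factor $(\riem + \delta)=O(e^{-ar})$ in tensor norm, the statement packages such residual contributions into the same ``$\partial\gbar * (\riem+\delta)$'' symbol. This structural form is what suffices for the subsequent ODE analysis of the second-derivative Riccati system \eqref{Transformed.Second.Tangential.System}.
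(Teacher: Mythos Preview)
Your approach is essentially the same as the paper's: expand the second covariant derivative formula for $\riemdudd{0}{\beta}{\alpha}{0}$, estimate the $\Gamma\cdot\nabla\riem$ contractions using the hypotheses and the earlier lemmas, and isolate the derivative-of-Christoffel terms. One inaccuracy in your bookkeeping: the shape-operator Christoffel derivatives $\partial_\nu\Gamma^{\sigma}_{\mu 0}=\partial_\nu\sffud{\sigma}{\mu}$ are \emph{not} the source of the $\partial\gbar*(\riem+\delta)$ structure. In the actual expansion they are paired with the mixed components $\riemdudd{\sigma}{\beta}{\alpha}{0}$ and $\riemdudd{0}{\beta}{\alpha}{\sigma}$ (which are already $O(e^{(1-a)r})$, no $R+\delta$ splitting needed), so these terms go straight into the $O(e^{(2-a)r})$ remainder via the hypothesis $\partial_\mu\sffud{\beta}{\alpha}=O(1)$. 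The $\partial\gbar*(\riem+\delta)$ symbol arises solely from the tangential Christoffel derivatives $\partial_\nu\Gambar^{\sigma}_{\mu\alpha}$ and $\partial_\nu\Gambar^{\beta}_{\mu\sigma}$, which are second-order in $\gbar$ and are contracted with $\riemdudd{0}{\beta}{\sigma}{0}+\kronecker{\beta}{\sigma}$ after the cancellation you describe; you handle this correctly in your final sentence. With that re-attribution the argument is complete and matches the paper.
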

\begin{proof}
We again begin with a standard formula relating covariant and coordinate derivatives.
\begin{align} \label{eqn:locref2}
\nabla_{\nu} \nabla_{\mu} \riemdudd{0}{\beta}{\alpha}{0} &= \partial_{\nu} (\nabla_{\mu}  \riemdudd{0}{\beta}{\alpha}{0}) - \Gamma^{s}_{\nu \mu} (\nabla_{s}  \riemdudd{0}{\beta}{\alpha}{0}) - \Gamma^{\sigma}_{\nu 0}( \nabla_{\mu}  \riemdudd{\sigma}{\beta}{\alpha}{0} ) \nonumber \\
& + \Gamma^{\beta}_{\nu \sigma}( \nabla_{\mu}  \riemdudd{0}{\sigma}{\alpha}{0} ) - \Gamma^{\sigma}_{\nu \alpha} (\nabla_{\mu}  \riemdudd{0}{\beta}{\sigma}{0} ) - \Gamma^{\sigma}_{\nu 0} (\nabla_{\mu}  \riemdudd{0}{\beta}{\alpha}{\sigma}) \\
&= \partial_{\nu} (\nabla_{\mu}  \riemdudd{0}{\beta}{\alpha}{0}) - \Gamma^{0}_{\nu \mu} (\nabla_{0}  \riemdudd{0}{\beta}{\alpha}{0}) - \Gamma^{\sigma}_{\nu \mu} (\nabla_{\sigma}  \riemdudd{0}{\beta}{\alpha}{0}) - \Gamma^{\sigma}_{\nu 0}( \nabla_{\mu}  \riemdudd{\sigma}{\beta}{\alpha}{0} ) \nonumber \\
& + \Gamma^{\beta}_{\nu \sigma}( \nabla_{\mu}  \riemdudd{0}{\sigma}{\alpha}{0} ) - \Gamma^{\sigma}_{\nu \alpha} (\nabla_{\mu}  \riemdudd{0}{\beta}{\sigma}{0} ) - \Gamma^{\sigma}_{\nu 0} (\nabla_{\mu}  \riemdudd{0}{\beta}{\alpha}{\sigma}) \nonumber.
\end{align}

We first consider estimates for the contractions $\Gamma * \nabla \riem$ that appear above.  The behaviour of the contractions come in three families based on the placement of the indices.  We estimate a typical member of these families using the estimates on page \pageref{pg:curvests} and the fact that $\gbar$ is Lipschitz as follows
\[ \Gamma^{0}_{\nu \mu} (\nabla_{0} \riemdudd{0}{\beta}{\alpha}{0}) = \sffdd{\mu}{\nu} O(e^{-ar}) = O(e^{(2-a)r}).\]
\[ \Gamma^{\sigma}_{\nu \mu} (\nabla_{\sigma} \riemdudd{0}{\beta}{\alpha}{0} )=  \Gambar^{\sigma}_{\nu \mu} (\nabla_{\sigma} \riemdudd{0}{\beta}{\alpha}{0} )= O(e^{(2-2a)r}).\]
\[ \Gamma^{\sigma}_{\nu 0}( \nabla_{\mu} \riemdudd{\sigma}{\beta}{\alpha}{0}) = O(e^{(2-a)r}).\]

Consequently all of the contractions $\Gamma * \nabla \riem$ that appear are at worst $O(e^{(2-a)r})$.

We now expand the first term of \eqref{eqn:locref2} above.
\begin{align*}
\partial_{\nu} (\nabla_{\mu} \riemdudd{0}{\beta}{\alpha}{0}) &= \partial_{\nu} \left( \partial_{\mu} \riemdudd{0}{\beta}{\alpha}{0} - \Gamma^{\sigma}_{\mu 0} \riemdudd{\sigma}{\beta}{\alpha}{0} + \Gamma^{\beta}_{\mu \sigma} \riemdudd{0}{\sigma}{\alpha}{0} - \Gamma^{\sigma}_{\mu \alpha} \riemdudd{0}{\beta}{\sigma}{0}  - \Gamma^{\sigma}_{\mu 0} \riemdudd{0}{\beta}{\alpha}{\sigma} \right) \\
&= \partial_{\nu} \partial_{\mu} \riemdudd{0}{\beta}{\alpha}{0}
- \partial_{\nu} \Gamma^{\sigma}_{\mu 0} \riemdudd{\sigma}{\beta}{\alpha}{0} 
- \Gamma^{\sigma}_{\mu 0} \partial_{\nu} \riemdudd{\sigma}{\beta}{\alpha}{0} 
+ \partial_{\nu}\Gamma^{\beta}_{\mu \sigma} \riemdudd{0}{\sigma}{\alpha}{0}
+ \Gamma^{\beta}_{\mu \sigma} \partial_{\nu} \riemdudd{0}{\sigma}{\alpha}{0} \\
&- \partial_{\nu}\Gamma^{\sigma}_{\mu \alpha} \riemdudd{0}{\beta}{\sigma}{0}
- \Gamma^{\sigma}_{\mu \alpha} \partial_{\nu}\riemdudd{0}{\beta}{\sigma}{0}
- \partial_{\nu}\Gamma^{\sigma}_{\mu 0} \riemdudd{0}{\beta}{\alpha}{\sigma}
- \Gamma^{\sigma}_{\mu 0} \partial_{\nu}\riemdudd{0}{\beta}{\alpha}{\sigma}.
\end{align*}

We again examine representative behaviour of the terms.  First we have
\[ \Gamma^{\sigma}_{\mu 0} \partial_{\nu} \riemdudd{\sigma}{\beta}{\alpha}{0} =  O(e^{(2-a)r}),\]
by Lemma \ref{lemma:est.covgggg0} and the fact $\Gamma^{\sigma}_{\mu 0} = \sffud{\sigma}{\mu}$.  Next, Lemma \ref{lemma:est.covg0gg0} and the fact that $\gbar$ is Lipschitz yield
\[ \Gamma^{\sigma}_{\mu \alpha} \partial_{\nu} \riemdudd{0}{\beta}{\sigma}{0} = O(e^{(1-a)r}).\]
Finally, as $\gbar$ is Lipschitz and the tangential derivatives of $\sffud{\sigma}{\mu}$ are bounded,
\[ \partial_{\nu} \Gamma^{\sigma}_{\mu 0} \riemdudd{\sigma}{\beta}{\alpha}{0} = \partial_{\nu} \sffud{\sigma}{\mu} \riemdudd{\sigma}{\beta}{\alpha}{0}= O(e^{(1-a)r}).\]
Now 
\begin{align*}
-\partial_{\nu} \Gamma^{\sigma}_{\mu \alpha} \riemdudd{0}{\beta}{\sigma}{0} &+ \partial_{\nu}\Gamma^{\beta}_{\mu \sigma} \riemdudd{0}{\sigma}{\alpha}{0} \\
&=-\partial_{\nu} \Gamma^{\sigma}_{\mu \alpha} (\riemdudd{0}{\beta}{\sigma}{0}+\kronecker{\beta}{\sigma}) + \partial_{\nu}\Gamma^{\beta}_{\mu \sigma} (\riemdudd{0}{\sigma}{\alpha}{0} + \kronecker{\sigma}{\alpha}) \\
&=-\partial_{\nu} \Gambar^{\sigma}_{\mu \alpha} (\riemdudd{0}{\beta}{\sigma}{0}+\kronecker{\beta}{\sigma}) + \partial_{\nu}\Gambar^{\beta}_{\mu \sigma} (\riemdudd{0}{\sigma}{\alpha}{0} + \kronecker{\sigma}{\alpha}).
\end{align*}
We now apply the derivative to the Christoffel symbols.  The product rule yields sums of terms that are contractions of second tangential partial derivatives of $\gbar$ contracted with $\riem+\delta$ and remainder terms involving only first tangential derivatives of $\gbar$ which we may estimate:
\begin{align*}
-\partial_{\nu} \Gamma^{\sigma}_{\mu \alpha} \riemdudd{0}{\beta}{\sigma}{0} &+ \partial_{\nu}\Gamma^{\beta}_{\mu \sigma} \riemdudd{0}{\sigma}{\alpha}{0} \\
&=-\partial_{\nu} \Gambar^{\sigma}_{\mu \alpha} (\riemdudd{0}{\beta}{\sigma}{0}+\kronecker{\beta}{\sigma}) + \partial_{\nu}\Gambar^{\beta}_{\mu \sigma} (\riemdudd{0}{\sigma}{\alpha}{0} + \kronecker{\sigma}{\alpha}) \\
&= (\partial \gbar * (\riem + \delta))^{\beta}_{\phantom{\beta}\alpha} + O(e^{-ar})
\end{align*}

Collecting everything above, using the worst case estimate yields
\begin{align*}
\partial_{\nu} \partial_{\mu} \riemdudd{0}{\beta}{\alpha}{0} &= \nabla_{\nu} \nabla_{\mu} \riemdudd{0}{\beta}{\alpha}{0} + (\partial \gbar *  (\riem + \delta))^{\beta}_{\phantom{\beta}\alpha} + O( e^{(2-a)r} )
\end{align*}
\end{proof}

We now proceed with the analysis of systems \eqref{Transformed.Tangential.System} and \eqref{Transformed.Second.Tangential.System}.  In what follows we regard these systems as systems of ODEs in new dependent variables.   For example, we regard the components $\partial_{\mu} \Wsffud{\beta}{\alpha}$ and $\partial_{\mu} \gbar_{\alpha \beta}$ in \eqref{Transformed.Tangential.System} as vectors in $\mathbb{R}^{n^3}$ which we denote $\partial W$ and $\partial \gbar$.  The above system may be compactly written as:
\begin{equation} \label{eqn:tansys3} 
\left\{ \begin{aligned}
(\partial W)' &= A \partial W + B \partial \gbar + H_1, \\
(\partial \gbar)' &= C \partial W + D \partial \gbar + H_2.
\end{aligned}
\right.
\end{equation}
where $A,B,C,D, H_1$ and $H_2$ are $(n^3 \times n^3)$-matrices.  We will not need the explicit form of these matrices in what follows; we only need estimates on the size of the matrix entries.  An entirely similar discussion holds for $\eqref{Transformed.Second.Tangential.System}$.

We now state our main comparison result for systems of this form.  For a proof see \cite[Appendix 3.1]{Bahuaud}.

\begin{theorem} \label{thm:ODEcomp} Suppose that $a, b, c, d, e, f$ are smooth functions on $[t_0, t_1]$ (respectively $[t_0, \infty)$) with $a, b, c, d$ positive.  Suppose that $x$ and $y$ are nonnegative continuous functions that are smooth where they are nonzero and satisfy the differential inequalities
\begin{align*}
x' & \leq a x + by + e, \\
y' & \leq c x + dy + f.
\end{align*}
Suppose in addition that $u$ and $v$ are positive smooth solutions of the corresponding system of differential equations:
\begin{align*}
u' & = a u + bv + e, \\
v' & = c u + dv + f.
\end{align*}
If $x(t_0) < u(t_0)$ and $y(t_0) < v(t_0)$ then $x < u$ and $y < v$ on  $[t_0, t_1]$ (respectively $[t_0, \infty)$).
\end{theorem}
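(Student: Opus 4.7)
The plan is to run a maximum-principle-style contradiction argument on the differences $X := u - x$ and $Y := v - y$, which are continuous on the interval, strictly positive at $t_0$, and which I want to show remain strictly positive throughout. Subtracting the differential inequalities for $x,y$ from the ODEs for $u,v$ formally gives, at any point where $x$ and $y$ are smooth,
\[ X' \geq a X + b Y, \qquad Y' \geq c X + d Y, \]
i.e. two linear differential inequalities with the same coefficient structure that drove the ODE for $(u,v)$.

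To conclude $X,Y > 0$ throughout, I would let $T$ denote the infimum of times $t > t_0$ at which $\min(X(t), Y(t)) = 0$ and assume for contradiction that $T$ lies in the given interval. Continuity and the strict initial inequalities force $T > t_0$, $X, Y > 0$ on $[t_0, T)$, and at least one of $X(T), Y(T)$ vanishing with the other $\geq 0$. Without loss of generality take $X(T) = 0$. Then $x(T) = u(T) > 0$, so by continuity $x$ is strictly positive on some neighborhood $U$ of $T$. On $U$ the function $x$ is therefore smooth, the hypothesis $x' \leq a x + b y + e$ applies pointwise, and the subtracted inequality $X' \geq a X + b Y$ is valid. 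On $U \cap [T-\delta, T]$ I then have $X \geq 0$, $Y \geq 0$, and $a, b > 0$, so $X' \geq 0$, which makes $X$ nondecreasing on $[T-\delta, T]$. This yields $0 < X(T-\delta) \leq X(T) = 0$, the desired contradiction. The case $Y(T) = 0$ is handled symmetrically using the inequality for $Y$.

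The step I expect to be most delicate is accommodating the weak regularity assumption: $x$ and $y$ are only smooth where nonzero, so the differential inequalities are not assumed at their zero sets, and one cannot differentiate $X$ and $Y$ naively at every point. What rescues the argument is that this obstacle is self-resolving at the critical moment: the vanishing of $X$ at the hypothetical first crossing $T$ forces $x(T) = u(T) > 0$, and hence the required smoothness of $x$ on a full neighborhood of $T$ comes for free — precisely the set on which the differential inequality is invoked. No mollification, perturbation, or cutoff is needed, and since the contradiction is extracted locally near $T$, the argument applies verbatim on a compact interval $[t_0, t_1]$ and on $[t_0, \infty)$.
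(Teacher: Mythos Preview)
Your argument is correct. The paper does not actually supply its own proof of this comparison theorem --- it simply cites \cite[Appendix 3.1]{Bahuaud} --- but the first-crossing-time argument you give is the standard one, and your handling of the regularity issue (namely, that $x(T)=u(T)>0$ forces $x$ to be positive, hence smooth, on a full neighborhood of $T$, so the differential inequality for $x$ is available precisely where you need it) is exactly the point that makes the weak hypothesis on $x,y$ harmless.
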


We are now ready to obtain our estimates for the first derivatives of the compactified metric and shape operator.

\begin{prop} \label{prop:firstiteration} Given curvature assumptions \eqref{AH0}, \eqref{AH1}, the first derivatives of the shape operator and compactified metric satisfy:
\[ \partial_{\mu} \sffud{\beta}{\alpha} = O(e^{(1-a)r}), \; \; 0 < a \leq 2\]
\begin{equation*} 
\partial_{r} \sffud{\beta}{\alpha} =
\left\{ \begin{aligned}
&O(e^{-ar}),& 0 < a < 2 \\
&O(r e^{-2r}),  & a=2
\end{aligned}
\right.
\end{equation*}
\begin{equation*} 
\partial_{\mu} \gbar_{\alpha \beta} = 
\left\{ \begin{aligned}
&O(e^{(1-a)r}),& 0 < a < 1 \\
&O(r),& a=1 \\
&O(1),& a > 1
\end{aligned}
\right.
\end{equation*}
\[ \partial_{\rho} \gbar_{\alpha \beta} = O(1). \]
\caution we provide estimates for $\sffud{\beta}{\alpha}$ in uncompactified coordinates $(r,y^{\alpha})$ but estimates for $\gbar$ in compactified coordinates $(\rho,y^{\alpha})$! 
\end{prop}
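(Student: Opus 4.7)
The plan is to analyze the rescaled tangential system \eqref{Transformed.Tangential.System} as a coupled linear ODE inequality in the Frobenius norms $x(r) = |\partial W|$ and $y(r) = |\partial \gbar|$, and then apply the comparison Theorem \ref{thm:ODEcomp}. The key algebraic feature built into the rescaling $W = e^{2r}\sff$ is that, since $\sffud{\gamma}{\alpha} = \kronecker{\gamma}{\alpha} + O(e^{-ar})$ by Theorem \ref{thm:comparison}, the three terms $-(\partial_\mu \Wsffud{\beta}{\gamma})\sffud{\gamma}{\alpha} - \sffud{\beta}{\gamma}(\partial_\mu \Wsffud{\gamma}{\alpha}) + 2\partial_\mu \Wsffud{\beta}{\alpha}$ in the first equation cancel to leading order and contribute only an $O(e^{-ar})$ self-interaction. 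The analogous cancellation $2\sffud{\gamma}{\alpha}\partial_\mu\gbar_{\gamma\beta} - 2\partial_\mu\gbar_{\alpha\beta} = O(e^{-ar})\,|\partial\gbar|$ occurs in the second equation.

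The central step is controlling the forcing $e^{2r}\partial_\mu \riemdudd{0}{\beta}{\alpha}{0}$. By Lemma \ref{lemma:est.covg0gg0}, it splits into Christoffel contractions with $\riemdudd{0}{\sigma}{\alpha}{0} + \kronecker{\sigma}{\alpha}$ plus an $O(e^{(3-a)r})$ remainder. In Fermi coordinates the purely tangential Christoffel symbols $\Gamma^\sigma_{\alpha\mu}$ coincide with those of $\gbar$, and are therefore linear in $\partial\gbar$ with coefficients bounded by $\gbar^{-1}$ (Theorem \ref{thm:comparison}). Combined with the elementary estimate $e^{2r}(\riemdudd{0}{\sigma}{\alpha}{0} + \kronecker{\sigma}{\alpha}) = O(e^{(2-a)r})$, this contributes a coupling of size $O(e^{(2-a)r})\,|\partial\gbar|$. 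Assembling everything yields the differential inequality system
\begin{align*}
x' &\leq c_1 e^{-ar} x + c_2 e^{(2-a)r} y + c_3 e^{(3-a)r},\\
y' &\leq c_4 e^{-2r} x + c_5 e^{-ar} y.
\end{align*}

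To apply Theorem \ref{thm:ODEcomp} I construct positive solutions $(u,v)$ of the corresponding equality system. Taking $u(r) = K_1 e^{(3-a)r}$ is consistent because both the coupling $c_2 e^{(2-a)r} v$ and the self-interaction $c_1 e^{-ar} u$ are of strictly lower order than the forcing $c_3 e^{(3-a)r}$ throughout $0 < a \leq 2$. Substituting into the second equation and integrating explicitly via an integrating factor produces $v(r) \sim K_2 e^{(1-a)r}$ when $0 < a < 1$, $v(r) \sim K_2 r$ when $a = 1$, and $v(r) \sim K_2$ (bounded, with a finite limit) when $a > 1$. Choosing $K_1, K_2$ large enough that $u(r_0) > x(r_0)$ and $v(r_0) > y(r_0)$, Theorem \ref{thm:ODEcomp} yields $x \leq u$ and $y \leq v$ on $[r_0, \infty)$. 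Translating via $W = e^{2r}\sff$ produces the claimed bound $\partial_\mu \sffud{\beta}{\alpha} = O(e^{(1-a)r})$ for all $0 < a \leq 2$ and the three-case estimate on $\partial_\mu \gbar_{\alpha\beta}$.

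The final two estimates are then immediate. The Riccati equation \eqref{eqn:Riccati} gives $\partial_r \sffud{\beta}{\alpha} = -\sffud{\beta}{\gamma}\sffud{\gamma}{\alpha} - \riemdudd{0}{\beta}{\alpha}{0}$, so combining the shape operator asymptotics from Theorem \ref{thm:comparison} with the componentwise decay $\riemdudd{0}{\beta}{\alpha}{0} + \kronecker{\beta}{\alpha} = O(e^{-ar})$ from \eqref{AH0} yields $O(e^{-ar})$ for $a<2$ and $O(re^{-2r})$ at $a=2$, the logarithmic factor tracking that of the squared shape operator correction. Similarly \eqref{eqn:gbar.normal.derivative} gives $\partial_\rho \gbar_{\alpha\beta} = 2e^r(\kronecker{\gamma}{\alpha} - \sffud{\gamma}{\alpha})\gbar_{\gamma\beta} = O(e^{(1-a)r})$, which is $O(1)$ once $a \geq 1$. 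The main technical obstacle is the critical case $a = 1$, where the forcing $c_4 K_1 e^{(1-a)r}$ in the second model equation degenerates to a constant and produces the linear-in-$r$ solution $v = K_2 r$; one must check that the subleading $c_5 e^{-ar} v$ contribution does not disrupt this balance, in the same spirit as the critical-case analysis already carried out in Lemma \ref{lemma:scal-riccati-est}.
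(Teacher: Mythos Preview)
Your approach is essentially the one in the paper: you reduce the transformed tangential system to scalar differential inequalities in $x=|\partial W|$, $y=|\partial\gbar|$, identify the coefficient sizes $A=O(e^{-ar})$, $B=O(e^{(2-a)r})$, $C=O(e^{-2r})$, $D=O(e^{-ar})$, $H_1=O(e^{(3-a)r})$, and then invoke Theorem~\ref{thm:ODEcomp} against the model equality system; the estimates for $\partial_r S$ and $\partial_\rho\gbar$ are read off directly from the Riccati equation and \eqref{eqn:gbar.normal.derivative} exactly as in the paper.

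The one place your write-up is looser than the paper is the analysis of the model system. Theorem~\ref{thm:ODEcomp} requires $(u,v)$ to be an \emph{exact} positive solution of the equality system, and $u=K_1e^{(3-a)r}$ is not one; what you need is that the genuine solution with suitably large initial data has these asymptotics. Your ansatz-and-substitute argument gives the right leading behaviour but does not by itself establish that the actual solution stays positive and obeys the claimed bounds (nor does it separate the borderline case $a=2$, where the self-interaction coefficient is $O(re^{-2r})$ rather than $O(e^{-2r})$). The paper handles this rigorously in the appendix by reducing the model system to a pair of second-order linear ODEs with asymptotically constant coefficients and invoking Levinson-type results (Propositions~\ref{prop:asympcc} and~\ref{prop:asympcc2}) to pin down the asymptotics of generic solutions; you should either appeal to that analysis or supply an equivalent argument.
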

\begin{proof}
Inserting the estimate for the coordinate derivative of curvature from Lemma \ref{lemma:est.covg0gg0} into 
\eqref{Transformed.Tangential.System}, we obtain
\begin{equation*}
\left\{ \begin{aligned}
 (\partial_{\mu} \Wsffud{\beta}{\alpha})' & = - (\partial_{\mu} \Wsffud{\beta}{\gamma}) \sffud{\gamma}{\alpha} - \sffud{\beta}{\gamma} (\partial_{\mu} \Wsffud{\gamma}{\alpha}) + 2 \partial_{\mu} \Wsffud{\beta}{\alpha} \\
 &+ e^{2r}
\Gamma^{\sigma}_{\alpha \mu} (\riemdudd{0}{\beta}{\sigma}{0} + \kronecker{\beta}{\sigma}) - e^{2r} \Gamma^{\beta}_{\sigma \mu} (\riemdudd{0}{\sigma}{\alpha}{0} + \kronecker{\sigma}{\alpha}) + O( e^{(3-a)r} ), \\ 
(\partial_{\mu} \gbar_{\alpha \beta})' & = 2 e^{-2r} (\partial_{\mu} \Wsffud{\gamma}{\alpha}) \gbar_{\gamma \beta} + 2 \sffud{\gamma}{\alpha} (\partial_{\mu} \gbar_{\gamma \beta}) - 2 (\partial_{\mu} \gbar_{\alpha \beta}).
\end{aligned}
\right.
\end{equation*}
This system is of the form \eqref{eqn:tansys3} with coefficient estimates $A = O(e^{-ar})$ when $0 < a < 2$ and $A = O(re^{-2r})$ when $a=2$.  Also we have $B = O(e^{(2-a)r})$, $C = O(e^{-2r})$, $D = O(e^{-ar})$, $H_1 = O(e^{(3-a)r})$, $H_2 = 0$.  We compare this system to the model system
\begin{equation*} 
\left\{ \begin{aligned}
u' & =  ce^{-ar} u +  ce^{(2-a)r} v +  ce^{(3-a)r}, \\
v' & =  ce^{-2r} u + ce^{-ar} v,
\end{aligned}
\right.
\end{equation*}
for some constant $c > 0$, when $0< a < 2$, and we compare to the model system 
\begin{equation*} 
\left\{ \begin{aligned}
u' & =  cre^{-2r} u +  c v +  ce^{r}, \\
v' & =  ce^{-2r} u + cre^{-2r} v,
\end{aligned}
\right.
\end{equation*}
when $a=2$.  Note that solutions to this comparison system with positive initial conditions remain positive.

Set $x(r) = |\partial W|$ and $y(r) = |\partial \gbar|$.  These functions are continuous and smooth where they are nonzero.  The Cauchy-Schwarz inequality implies that when $x$ and $y$ are nonzero, $x' \leq |(\partial W)'|$ and $y' \leq |(\partial \gbar)'|$.  Applying this and the coefficient estimates to our system implies
\begin{equation*} 
\left\{ \begin{aligned}
x' & \leq  ce^{-ar} x +  ce^{(2-a)r} y +  ce^{(3-a)r}, \\
y' & \leq  ce^{-2r} x + ce^{-ar} y,
\end{aligned}
\right.
\end{equation*}

By Theorem \ref{thm:ODEcomp} and the analysis of the appendix, we find that the solutions satisfy estimates 
\begin{equation*} 
\partial W = 
\left\{ \begin{aligned}
O(e^{(3-a)r}),& 0 < a \leq 2 
\end{aligned}
\right.
\end{equation*}
and 
\begin{equation*} 
\partial \gbar = 
\left\{ \begin{aligned}
&O(e^{(1-a)r}),& 0 < a < 1 \\
&O(r),& a=1 \\
&O(1),& a > 1
\end{aligned}
\right.
\end{equation*}

For the estimates for $\rho$ derivatives, observe that $\partial_{\rho} \gbar_{\alpha \beta} = 2 \rho^{-1} (\kronecker{\gamma}{\alpha}-\sffud{\gamma}{\alpha}) \gbar_{\gamma \beta} = O(1)$ by the shape operator estimate from Theorem \ref{thm:comparison}.

We may estimate $\partial_r \sffud{\gamma}{\alpha}$ using the Riccati equation and the estimates for the shape operator with \eqref{AH0}:
\begin{equation*} 
 \partial_r \sffud{\beta}{\alpha} = - \sffud{\beta}{\mu} \sffud{\mu}{\alpha} - (\riem_{\partial r})^{\beta}_{\phantom{\beta}\alpha} =
\left\{ \begin{aligned}
&O(e^{-ar}),& 0 < a < 2 \\
&O(r e^{-2r}) = O( \rho^2\log \rho), & a=2
\end{aligned}
\right.
\end{equation*}
\end{proof}

We now perform the second iteration of the argument to estimate second derivatives of the metric
\begin{prop} \label{prop:seconditeration} Given curvature assumptions \eqref{AH0}, \eqref{AH1} and \eqref{AH2} for $a > 1$, the second derivatives of the shape operator and compactified metric satisfy:
\begin{equation*} 
\partial^2_{\mu \nu} \sffud{\beta}{\alpha} =
\left\{ \begin{aligned}
&O(e^{(2-a)r}),& 0 < a < 2 \\
&O( r ), &a=2
\end{aligned}
\right.
\end{equation*}
\begin{equation*} 
\partial^2_r \sffud{\beta}{\alpha} = 
\left\{ \begin{aligned}
&O(e^{-ar}),& 0 < a < 2 \\
&O(r e^{-2r}), & a=2
\end{aligned}
\right.
\end{equation*}
\[ \partial_r \partial_{\mu} \sffud{\beta}{\alpha} = O(e^{(1-a)r}) \]

\begin{equation*} 
\partial^2_{\mu \nu} \gbar_{\alpha \beta},  \partial^2_{\rho \rho} \gbar_{\alpha \beta}= 
\left\{ \begin{aligned}
&O(e^{(2-a)r}) = O(\rho^{a-2}),& 0 < a < 2 \\
&O(r) = O( \log \rho), & a=2
\end{aligned}
\right.
\end{equation*}
\[ \partial^2_{\mu \rho} \gbar_{\alpha \beta} = O(1).\]
\end{prop}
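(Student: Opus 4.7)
The plan is to mirror the argument used for Proposition \ref{prop:firstiteration} one step further, now on the system \eqref{Transformed.Second.Tangential.System}. The assumption $a > 1$ is precisely what makes Proposition \ref{prop:firstiteration} yield $\partial_{\mu}\gbar_{\alpha\beta} = O(1)$ (so $\gbar$ is Lipschitz in the reference covering) and $\partial_{\mu}\sffud{\beta}{\alpha} = O(e^{(1-a)r}) = O(1)$, which are the exact hypotheses required for Lemma \ref{lemma:est.sec.cov.der}. Thus all coefficient estimates needed to feed into the second-order system are available on entry.

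First I would substitute the conclusion of Lemma \ref{lemma:est.sec.cov.der} into the evolution of $\partial_{\nu}\partial_{\mu} \Wsffud{\beta}{\alpha}$ in \eqref{Transformed.Second.Tangential.System}. The inhomogeneous source becomes
\[ e^{2r}\partial_{\nu}\partial_{\mu}\riemdudd{0}{\beta}{\alpha}{0} = e^{2r}\bigl[(\partial\gbar * (\riem+\delta))^{\beta}_{\phantom{\beta}\alpha} + O(e^{(2-a)r})\bigr] = O(e^{(4-a)r}), \]
using $\partial\gbar = O(1)$ and $\riemdudd{0}{\beta}{\alpha}{0} + \kronecker{\beta}{\alpha} = O(e^{-ar})$, while the quadratic terms $e^{-2r}(\partial W)(\partial W)$ are at worst $O(e^{(4-2a)r})$ by Proposition \ref{prop:firstiteration}, hence absorbed in the source. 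Writing $x(r) = |\partial^2 W|$, $y(r) = |\partial^2 \gbar|$, the Cauchy--Schwarz trick used before converts the system into the differential inequality
\begin{equation*}
\left\{\begin{aligned}
x' & \leq c e^{-ar} x + c e^{(2-a)r} y + c e^{(4-a)r},\\
y' & \leq c e^{-2r} x + c e^{-ar} y + c e^{(1-a)r},
\end{aligned}\right.
\end{equation*}
where the $O(e^{(1-a)r})$ source in the metric equation comes from the $e^{-2r}(\partial W)(\partial\gbar)$ cross terms in \eqref{Transformed.Second.Tangential.System}. For $a = 2$ the same procedure produces the extra factor of $r$ familiar from Theorem \ref{thm:comparison} and Proposition \ref{prop:firstiteration}.

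Next I would invoke the ODE comparison Theorem \ref{thm:ODEcomp} with the model system obtained by replacing $\leq$ with $=$, and read off the asymptotics from the appendix analysis cited in \cite{Bahuaud}. This gives $\partial^2 W = O(e^{(4-a)r})$ for $1 < a < 2$ (and $O(r e^{2r})$ at $a = 2$), and $\partial^2 \gbar = O(e^{(2-a)r})$ (respectively $O(r)$ at $a = 2$). Unwinding $W = e^{2r}\sff$ yields $\partial^2_{\mu\nu}\sffud{\beta}{\alpha} = e^{-2r}\partial^2 W = O(e^{(2-a)r})$, matching the claimed estimate for the shape operator.

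Finally, the normal-direction estimates follow without further ODE work. Differentiating the Riccati equation \eqref{eqn:Riccati} once in $r$ and once in $y^{\mu}$, then plugging in the shape operator and curvature estimates from Theorem \ref{thm:comparison}, Proposition \ref{prop:firstiteration} and Lemma \ref{lemma:est.covg0gg0}, gives $\partial^2_r \sffud{\beta}{\alpha}$ and $\partial_r\partial_{\mu}\sffud{\beta}{\alpha}$ with the stated decay. The mixed metric derivative $\partial^2_{\mu\rho}\gbar_{\alpha\beta}$ is read off directly from the formula $\partial_{\mu}\partial_{\rho}\gbar_{\alpha\beta} = -2\rho^{-1}(\partial_{\mu}\sffud{\gamma}{\alpha})\gbar_{\gamma\beta} + 2\rho^{-1}(\delta^{\gamma}_{\alpha}-\sffud{\gamma}{\alpha})\partial_{\mu}\gbar_{\gamma\beta}$, using $\partial_{\mu}\sff = O(e^{(1-a)r}) = O(\rho^{a-1})$ and the $O(e^{-ar})$ shape operator estimate so that the $\rho^{-1}$ blow-up cancels. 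The pure $\rho\rho$-derivative follows similarly from the explicit formula for $\partial^2_{\rho}\gbar_{\alpha\beta}$ recorded just after \eqref{eqn:gbar.normal.derivative}, with the dominant contribution coming from the $\partial_r\sffud{\gamma}{\alpha}$ term.

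The main obstacle is bookkeeping: the system \eqref{Transformed.Second.Tangential.System} has many cross terms, and one must confirm that every quadratic combination of first derivatives (already controlled by Proposition \ref{prop:firstiteration}) is dominated by the curvature source $O(e^{(4-a)r})$ in the $W$-equation and $O(e^{(1-a)r})$ in the $\gbar$-equation. Once this worst-case tabulation is done, the proof is a mechanical reapplication of the comparison strategy from Proposition \ref{prop:firstiteration}.
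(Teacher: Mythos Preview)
Your proposal is correct and follows essentially the same route as the paper: you feed the first-iteration estimates from Proposition \ref{prop:firstiteration} (which require $a>1$) into Lemmas \ref{lemma:est.covgggg0}--\ref{lemma:est.sec.cov.der}, reduce \eqref{Transformed.Second.Tangential.System} to a differential inequality with the same coefficient profile $A,B,C,D,H_1,H_2$ as in the paper, invoke Theorem \ref{thm:ODEcomp} together with the model-system appendix (Proposition \ref{prop:asypmodel2}), and finish the normal-direction estimates by direct differentiation of the Riccati equation and of \eqref{eqn:gbar.normal.derivative}. One small point of bookkeeping: the term $e^{2r}(\partial\gbar*(\riem+\delta))$ arising from Lemma \ref{lemma:est.sec.cov.der} actually contains $\partial_\nu\Gambar$, hence second tangential derivatives of $\gbar$, and this is precisely the origin of your $ce^{(2-a)r}y$ coupling---so it is a $B$-coefficient rather than a pure source; your inequality is correct, but the justification should reflect this (and for $\partial_r^2\sffud{\beta}{\alpha}$ the relevant curvature lemma is \ref{lemma:est.cov00gg0}, not \ref{lemma:est.covg0gg0}).
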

\begin{proof}
Note that as $a > 1$, Proposition \ref{prop:firstiteration} provides estimates for the first derivatives of the shape operator and compactified metric.

For the second tangential derivatives, we now insert the estimates from Lemma \ref{lemma:est.covgggg0} and \ref{lemma:est.sec.cov.der} to obtain the system
\begin{equation*}  
\left\{ \begin{aligned}
 (\partial_{\nu} \partial_{\mu} \Wsffud{\beta}{\alpha})' & = - (\partial_{\nu}\partial_{\mu} \Wsffud{\beta}{\gamma}) \sffud{\gamma}{\alpha} - \sffud{\beta}{\gamma} (\partial_{\nu} \partial_{\mu} \Wsffud{\gamma}{\alpha}) + 2 \partial_{\nu} \partial_{\mu} \Wsffud{\beta}{\alpha} \\  
 &-e^{-2r}(\partial_{\mu} \Wsffud{\beta}{\gamma}) (\partial_{\nu} \Wsffud{\gamma}{\alpha}) - e^{-2r}(\partial_{\nu} \Wsffud{\beta}{\gamma}) (\partial_{\mu} \Wsffud{\gamma}{\alpha}) \\
 &- e^{2r}( (\partial \gbar * (\riem+\delta))_{\phantom{\beta}\alpha}^{\beta} + O( e^{(2-a)r} ) ) \\
(\partial_{\nu} \partial_{\mu} \gbar_{\alpha \beta})' & = 2 e^{-2r} (\partial_{\nu} \partial_{\mu} \Wsffud{\gamma}{\alpha}) \gbar_{\gamma \beta} + 2 \sffud{\gamma}{\alpha} (\partial_{\nu} \partial_{\mu} \gbar_{\gamma \beta}) - 2 (\partial_{\mu} \gbar_{\alpha \beta}) \\
&+ 2 e^{-2r}( \partial_{\mu} \Wsffud{\gamma}{\alpha})( \partial_{\nu}\gbar_{\gamma \beta}) + 2 e^{-2r} (\partial_{\nu} \Wsffud{\gamma}{\alpha}) (\partial_{\nu}\partial_{\mu} \gbar_{\gamma \beta}).
\end{aligned}
\right.
\end{equation*}
This system is again of the form \eqref{eqn:tansys3} with coefficient estimates $A = O(e^{-ar})$ $(0 < a < 2)$ and $A=O(re^{-2r})$ $(a=2)$, $B = O(e^{(2-a)r})$, $C = O(e^{-2r})$, $D = O(e^{-ar})$, $H_1 = O(e^{(4-a)r})$, $H_2 = O(e^{(1-a)r})$.  We again compare this system to a model system.  By Theorem \ref{thm:ODEcomp} and the analysis of the appendix, we find that the solutions satisfy estimates 
\begin{equation*} 
|\partial^2 W| =
\left\{ \begin{aligned}
&O(e^{(4-a)r}),& 0 < a < 2 \\
&O( r e^{2r} ), &a=2
\end{aligned}
\right.
\end{equation*}
and 
\begin{equation*} 
|\partial^2 \gbar| =
\left\{ \begin{aligned}
&O(e^{(2-a)r}),& 0 < a < 2 \\
&O(r),& a=2
\end{aligned}
\right.
\end{equation*}

For the second $\rho$ derivatives we find
\[ \partial^2_{\rho} \gbar_{\alpha \beta} = -2 \rho^{-2} (\delta^{\gamma}_{\alpha}-\sffud{\gamma}{\alpha}) \gbar_{\gamma \beta} + \rho^{-2} (\partial_r \sffud{\gamma}{\alpha}) \gbar_{\gamma \beta} + \rho^{-1} (\delta^{\gamma}_{\alpha}-\sffud{\gamma}{\alpha}) \partial_{\rho} \gbar_{\gamma \beta}.\]
Consequently we find
\begin{equation*} 
\partial^2_{\rho} \gbar_{\alpha \beta} = 
\left\{ \begin{aligned}
&O(e^{(2-a)r}) = O(\rho^{a-2}),& 0 < a < 2 \\
&O(r) = O( \log \rho), & a=2
\end{aligned}
\right.
\end{equation*}

Taking a tangential derivative of equation \eqref{eqn:gbar.normal.derivative} yields
\begin{equation*}
\partial_{\mu} \partial_{\rho} \gbar_{\alpha \beta} = -2 \rho^{-1} (\partial_{\mu} \sffud{\gamma}{\alpha}) \gbar_{\gamma \beta} + 2 \rho^{-1} (\delta^{\gamma}_{\alpha}-\sffud{\gamma}{\alpha}) \partial_{\mu} \gbar_{\gamma \beta}.
\end{equation*}

Given the estimates from Proposition \ref{prop:firstiteration}, we find
\begin{equation*} 
\partial_{\mu} \partial_{\rho} \gbar_{\alpha \beta}= 
\left\{ \begin{aligned}
&O(e^{(2-a)r}) = O(\rho^{a-2}),& 0 < a < 2 \\
&O(1), & a=2
\end{aligned}
\right.
\end{equation*}

Finally we work out the remaining estimates for second derivatives of $\sffud{\beta}{\alpha}$.  Differentiating the Riccati equation yields 
\begin{equation*} 
 \partial^2_r \sffud{\beta}{\alpha} = - (\partial_r \sffud{\beta}{\mu}) \sffud{\mu}{\alpha} - \sffud{\beta}{\mu} (\partial_r \sffud{\mu}{\alpha}) - \partial_r (\riem_{\partial r})^{\beta}_{\phantom{\beta}\alpha}.
\end{equation*}
Note that from Lemma \ref{lemma:est.cov00gg0},
\[ \partial_r (\riem_{\partial r})^{\beta}_{\phantom{\beta}\alpha} = -\Gamma^{\sigma}_{\alpha \mu} (\riemdudd{0}{\beta}{\sigma}{0} + \kronecker{\beta}{\sigma} )+ \Gamma^{\beta}_{\sigma \mu} (\riemdudd{0}{\sigma}{\alpha}{0} + \kronecker{\beta}{\sigma}) + O( e^{-ar} ) = O(e^{-ar}).\]
Combined with the estimate for $\partial_r \sffud{\beta}{\mu}$ from Proposition \ref{prop:firstiteration}, we have
\begin{equation*} 
\partial^2_r \sffud{\beta}{\alpha} = 
\left\{ \begin{aligned}
&O(e^{-ar}),& 0 < a < 2 \\
&O(r e^{-2r}), & a=2
\end{aligned}
\right.
\end{equation*}
Finally, the first equation of \eqref{Tangential.System} combined with the estimates Proposition \ref{prop:firstiteration} allows us to estimate the mixed derivatives of $\sffud{\beta}{\alpha}$.
\begin{equation*} 
 \partial_r \partial_{\mu} \sffud{\beta}{\alpha} = -(\partial_{\mu} \sffud{\beta}{\gamma}) \sffud{\gamma}{\alpha} - \sffud{\beta}{\gamma} (\partial_{\mu} \sffud{\gamma}{\alpha}) -\partial_{\mu} \riemdudd{0}{\beta}{\alpha}{0} = O(e^{(1-a)r}), \; 0 < a \leq 2
\end{equation*}
\end{proof}

We now present a Lemma that allows us to convert decay estimates for functions into H\"older estimates.

\begin{lemma} \label{lemma:holderreg}
Suppose that $F$ is a function in compactified Fermi coordinates $W \times [0, \epsilon)$ that is smooth for $\rho > 0$.
\begin{enumerate}
\item If $0 < a < 1$ and all coordinate derivatives of $F$ satisfy
\[ \partial F(p, \rho) = O(\rho^{a-1}),\]
Then $F \in C^{0,a}(W \times [0, \epsilon))$.
\item If all coordinate derivatives of $F$ satisfy
\[ \partial F(p, \rho) = O(\log \rho),\]
Then $F \in C^{0,b}(W \times [0, \epsilon))$, for every $0 < b < 1$.
\item If $1 < a < 2$ and all second coordinate derivatives of $F$ satisfy
\[ \partial^2 F(p, \rho) = O(\rho^{a-2}),\]
Then $F \in C^{1,a-1}(W \times [0, \epsilon))$.
\item If all second coordinate derivatives of $F$ satisfy
\[ \partial^2 F(p, \rho) = O(\log \rho),\]
Then $F \in C^{1,b}(W \times [0, \epsilon))$, for every $0 < b < 1$.
\end{enumerate}
\end{lemma}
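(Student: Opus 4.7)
The plan is, for each part, to integrate the given pointwise decay of coordinate derivatives of $F$ along piecewise axis-parallel paths in $W \times [0, \epsilon)$ and to convert the resulting bounds into H\"older estimates via a short interpolation trick in the tangential direction. I describe part (1) in detail; the remaining parts are variations. Since $\partial_\rho F(p, \rho) = O(\rho^{a-1})$ with $a > 0$, the integrand is integrable at $\rho = 0$, so for each fixed $p \in W$ the limit $F(p, 0) := \lim_{\rho \to 0^+} F(p, \rho)$ exists by the Cauchy criterion and defines a continuous extension of $F$. Moreover, for $0 \leq \rho_1 \leq \rho_2$, integration yields the vertical H\"older estimate $|F(p, \rho_1) - F(p, \rho_2)| \leq C \int_{\rho_1}^{\rho_2} s^{a-1}\, ds \leq (C/a)|\rho_1 - \rho_2|^a$, where the last inequality uses the elementary subadditivity $y^a \leq x^a + (y-x)^a$ valid for $0 < a < 1$ and $x, y \geq 0$.

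For the full H\"older estimate, let $P_i = (p_i, \rho_i)$ with (without loss of generality) $\rho_1 \leq \rho_2$, and set $d := |P_1 - P_2|$. If $|p_1 - p_2| \leq \rho_2$, the tangential hypothesis gives
\[
|F(p_1, \rho_2) - F(p_2, \rho_2)| \leq C \rho_2^{a-1}|p_1 - p_2| = C\left(\frac{|p_1-p_2|}{\rho_2}\right)^{1-a} |p_1 - p_2|^a \leq C|p_1-p_2|^a,
\]
and combined with the vertical estimate this bounds $|F(P_1) - F(P_2)| \leq C d^a$. If instead $|p_1 - p_2| > \rho_2$, choose the intermediate height $\rho_* := |p_1 - p_2| \leq d$ and traverse the three-leg path $P_1 \to (p_1, \rho_*) \to (p_2, \rho_*) \to P_2$: the two vertical legs each contribute $O(\rho_*^a) = O(d^a)$ and the horizontal leg contributes $C \rho_*^{a-1} |p_1 - p_2| = C|p_1 - p_2|^a \leq C d^a$, again giving $|F(P_1) - F(P_2)| \leq C d^a$. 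The only delicacy is the choice of $\rho_*$: it must be large enough to tame the singular coefficient $\rho^{a-1}$ in the tangential bound, yet $O(d)$ so that the vertical legs do not dominate.

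Part (2) proceeds identically with $\rho^{a-1}$ replaced by $|\log \rho|$: the vertical estimate becomes $\int_{\rho_1}^{\rho_2}|\log s|\,ds \leq \int_0^{|\rho_1-\rho_2|}|\log s|\,ds \leq C_b |\rho_1 - \rho_2|^b$ (using that $|\log s|$ is decreasing on $(0,1)$ and $t|\log t| = O(t^b)$ for every $b \in (0,1)$), while the tangential interpolation in the case $|p_1 - p_2| \leq \rho_2$ uses $|\log \rho_2|\cdot|p_1 - p_2|^{1-b} \leq |\log \rho_2|\cdot \rho_2^{1-b} \leq C_b$. Parts (3) and (4) follow by a bootstrap: since $\partial^2 F = O(\rho^{a-2}) = O(\rho^{(a-1)-1})$ with $a - 1 \in (0, 1)$, each first-order coordinate derivative $\partial_i F$ itself satisfies the hypothesis of part (1) with exponent $a - 1$; after verifying that $\partial_i F$ extends continuously to $\rho = 0$ by the same Cauchy-criterion argument (the singularity $\rho^{a-2}$ is integrable since $a > 1$), part (1) yields $\partial_i F \in C^{0, a-1}$ and hence $F \in C^{1, a-1}$, with the log case of part (4) treated analogously.
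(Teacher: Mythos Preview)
Your proof is correct and follows essentially the same approach as the paper's: both hinge on the ``lifting'' trick of moving the tangential comparison to height comparable to $|p_1-p_2|$ when the points lie too close to the boundary, and both reduce parts (3)--(4) to part (1) applied to first derivatives. The only minor organizational difference is that for part (2) the paper simply invokes the bound $|\log\rho|\le C_b\,\rho^{-b}$ to reduce directly to part (1), whereas you estimate the $\log$ integrals separately.
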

\begin{proof}
We will only give the proof for the first two cases, the others being straightforward generalizations.  For the first case take a truncated cylinder $W \times (0, \epsilon)$ on which 
\[ |\partial_\mu F(p, \rho)| \leq C \rho^{a-1}, \; \mbox{and} \]
\[ |\partial_\rho F(p, \rho)| \leq C \rho^{a-1} \]
for some constant $C > 0$ independent of $\rho \in (0, \epsilon)$ and $p \in W$.  We first remark that it is sufficient to prove only the following ``tangential" H\"older continuity:
$$
\frac{\left|F(p, \rho) - F(q, \rho)\right|}{\left|p-q\right|^a} \leq\tilde{C}\qquad\forall p, q \in W, p \neq q ~\text{and}~ \rho \in (0, \epsilon)
$$
for some constant $\tilde{C}$ independent of $p, q, \rho$. Indeed if $\rho, \rho' \in (0, \epsilon), \rho \neq \rho'$ :
\begin{eqnarray*}
\frac{\left|F(p, \rho) - F(q, \rho')\right|}{\left(\left|p-q\right|^2 + \left|\rho-\rho'\right|^2\right)^\frac{a}{2}}
&\leq& \frac{\left|F(p,\rho)-F(q,\rho)\right|}{\left(\left|p-q\right|^2+\left|\rho-\rho'\right|^2\right)^\frac{a}{2}}
 		 +\frac{\left|F(q,\rho)-F(q,\rho')\right|}{\left(\left|p-q\right|^2+\left|\rho-\rho'\right|^2\right)^\frac{a}{2}}\\
&\leq& \frac{\left|F(p,\rho)-F(q,\rho)\right|}{\left(\left|p-q\right|\right)^a}
		 + \frac{\left|F(q,\rho)-F(q,\rho')\right|}{\left|\rho-\rho'\right|^a}\\
&\leq& \tilde{C} + \frac{\left|F(q,\rho)-F(q,\rho')\right|}{\left|\rho-\rho'\right|^a}
\end{eqnarray*}

The second term can be easily estimated (assume $0 < \rho' < \rho$):

\begin{eqnarray*}
F(q,\rho)-F(q,\rho') & = & \int_{\rho'}^{\rho} \partial_\rho F(q, \sigma) d\sigma\\
\left|F(q,\rho)-F(q,\rho')\right| &\leq& \int_{\rho'}^{\rho} \left|\partial_\rho F(q, \sigma)\right| d\sigma\\
	& \leq & C \int_{\rho'}^{\rho} \sigma^{a-1} d\sigma\\
	& \leq & C (\rho - \rho') \int_0^1 \left( \rho x + (1-x) \rho' \right)^{a-1} dx \qquad \left(\sigma = \rho x + (1-x) \rho'\right)\\
	& \leq & C \left(\rho-\rho'\right)^a \int_0^1 x^{a-1} dx
								\qquad \text{because}~\left(\rho'+x(\rho-\rho')\right)^{a-1} \leq \left(x(\rho-\rho')\right)^{a-1}\\
	& \leq & \frac{C}{a} \left(\rho-\rho'\right)^a
\end{eqnarray*}

So we need only estimate the tangential H\"older inequality.  Let $p$ and $q$ be two points in $W$ and denote
$d = \left|p-q\right|$ the (euclidean) distance between $p$ and $q$ in the chart and assume $d < 1$. We distinguish two cases.  First assume that $\rho \geq d$:

\begin{eqnarray*}
\left|F(p, \rho) - F(q, \rho)\right| &  =  & \left|\int_0^1 \left(p^\mu - q^\mu\right) \partial_\mu F\right|\\
		& \leq& \left|p^\mu - q^\mu\right| \sup_{p' \in \Omega_0} \left|\partial_{\mu} F(p', \rho)\right|\\
		& \leq& C~d~\rho^{a-1}\\
		& \leq& C \left(\frac{d}{\rho}\right)^{1-a} d^a\\
		& \leq& C~d^a
\end{eqnarray*}

Assuming now that $\rho \leq d$, we ``lift" the inequality to $\rho = d$:
\[ \begin{array}{rcccccc}
\left|F(p, \rho) - F(q, \rho)\right| & \leq &
			\left|F(p, \rho) - F(p,    d)\right|
	&+& \left|F(p,    d) - F(q,    d)\right|
	&+& \left|F(q,    d) - F(q, \rho)\right|\\
& \leq & C \int_\rho^d \sigma^{a-1} d \sigma & + & C d^a & + & C \int_\rho^d \sigma^{a-1} d \sigma\\
& \leq & \multicolumn{5}{l}{C d^a + 2 \frac{C}{a} \left(d^a - \rho^a \right)}\\
& \leq & \multicolumn{5}{l}{\tilde{C} d^a}\\
\end{array} \]

Thus $F \in \mathcal{C}^{0, a}\left(W \times  (0, \epsilon)\right)$.  A standard continuity argument shows that
$F \in \mathcal{C}^{0, a}\left(W\times [0, \epsilon)\right)$. 

In the second case we use the estimate that for any $0<b<1$ there exists $C'>0$ where
\[ |\log \rho|  \leq C' \rho^{-b},\]
and we repeat the same argument above.
\end{proof}

As explained in \cite{Bahuaud} we can use metric estimates to improve the regularity of the manifold transition functions via a bootstrap argument involving the transformation formula for Christoffel symbols under change of coordinates.  We have 
\begin{lemma} \label{lemma:transfncreg}
Suppose $\cA_1 = \{(U_{\alpha}, \phi_{\alpha})\}$ and $\cA_2 = \{(V_{\beta}, \psi_{\beta})\}$ are two smooth atlases arising from distinct essential subsets that are $C^{0,1}$ compatible.  Suppose that $\gbar$ is a metric that is $C^{k,\alpha}$ with respect to both atlases, for either $k=0, \alpha=1$ or $k\geq1$, $0 \leq \alpha \leq 1$.  Then $\cA_1$ and $\cA_2$ are $C^{k+1,\alpha}$ compatible.
\end{lemma}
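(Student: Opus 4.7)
The plan is a bootstrap argument based on the transformation law for Christoffel symbols. Writing a transition as $\tilde{x}^i = \phi^i(x)$ and denoting the Christoffel symbols of $\gbar$ in the two coordinate systems by $\Gamma^l_{pq}$ and $\tilde{\Gamma}^k_{ij}$, the standard formula rearranges to
\begin{equation*}
\frac{\partial^2 x^l}{\partial \tilde{x}^i \partial \tilde{x}^j}
  = \frac{\partial x^l}{\partial \tilde{x}^k}\,\tilde{\Gamma}^k_{ij}
  - \frac{\partial x^p}{\partial \tilde{x}^i}\,\frac{\partial x^q}{\partial \tilde{x}^j}\,\Gamma^l_{pq}.
\end{equation*}
This identity expresses the second derivatives of the transition purely in terms of its first derivatives and the Christoffel symbols of $\gbar$ in each atlas, and it is the mechanism that converts regularity of $\gbar$ into regularity of $\phi$.

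For the base case $k=0$, $\alpha=1$, I would substitute directly: the hypothesis $\gbar \in C^{0,1}$ in each atlas yields $\Gamma, \tilde{\Gamma} \in L^\infty$, while the assumed $C^{0,1}$ compatibility of the atlases gives $\partial x/\partial \tilde{x} \in L^\infty$. Consequently the right-hand side is $L^\infty$, so $\partial^2 x/\partial \tilde{x}^2 \in L^\infty$, which is precisely saying $\phi \in C^{1,1}$. For $k \geq 1$ and $\alpha \in [0,1]$, I would then argue by induction on $k$, starting from the $C^{1,1}$ compatibility just established. At the inductive step the inductive hypothesis provides $\phi \in C^{k,\alpha}$, so the first derivatives $\partial x/\partial \tilde{x}$ are $C^{k-1,\alpha}$; meanwhile $\gbar \in C^{k,\alpha}$ gives $\Gamma, \tilde{\Gamma} \in C^{k-1,\alpha}$ (with the latter composed with $\phi$ after a change of variables). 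Differentiating the displayed identity $k-1$ further times and applying the standard product and composition rules for H\"older spaces shows that the right-hand side lies in $C^{0,\alpha}$, so $\partial^{k+1} \phi \in C^{0,\alpha}$, i.e.\ $\phi \in C^{k+1,\alpha}$ as desired.

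The principal obstacle is the bookkeeping inherent in the bootstrap: each time the formula is differentiated, one must verify that the resulting products and compositions preserve the expected H\"older class, keeping careful track of which factor contributes which exponent (in particular that a composition of a $C^{k-1,\alpha}$ Christoffel symbol with a $C^{0,1}$ transition still controls the relevant norm). A secondary subtlety occurs at the very first step, where the initial $C^{0,1}$ compatibility gives only an $L^\infty$ bound on $\partial\phi$ rather than H\"older continuity, so this first application of the formula has to be handled independently before the H\"older-exponent bootstrap can proceed in earnest for $k\geq 1$.
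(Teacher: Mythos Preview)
Your proposal is correct and follows essentially the same approach as the paper: both arguments reduce to a local diffeomorphism, invoke the transformation law for Christoffel symbols to express the second derivatives of the transition in terms of its first derivatives and the Christoffel symbols in each chart, and then bootstrap from the initial $C^{0,1}$ compatibility up to $C^{k+1,\alpha}$. The only cosmetic difference is that the paper iterates the same identity with successively improved regularity of $\partial\phi$ for a fixed target $k$, whereas you phrase the bootstrap as an induction on $k$; the content is the same.
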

\begin{proof}
This is a local question so we reduce to the case where $f: (U \subset \bR^{n+1}, x^i) \rightarrow (\widetilde{U} \subset \bR^{n+1}, y^i)$ is a $C^{0,1}$ diffeomorphism between open sets of $\bR^{n+1}$.  
Write the components of the metric as
\[ \gbar_{ij} = \gbar\left( \pd{}{x^i}, \pd{}{x^j} \right) \; \mbox{and} \; \gtil_{kl} = \gbar\left( \pd{}{y^k} , \pd{}{y^l} \right). \]
As $\gbar$ satisfies $C^{k,\alpha}$ estimates in both systems of coordinates, Christoffel symbols are $C^{k-1,\alpha}$ functions (bounded in the Lipschitz case).  The transformation law for Christoffel symbols under a change of coordinates states
\begin{equation} \label{eqn:ChristoffelChangeCoords}
\pd{^2 y^m}{x^i \partial x^j} = \pd{y^k}{x^i} \pd{y^l}{x^j} \Gambar^m_{kl} - \left( \Gamtil^l_{ij} \circ f \right) \pd{y^m}{x^l}. 
\end{equation}
On our first application of \eqref{eqn:ChristoffelChangeCoords} we find that the right hand side of this equation is bounded if $f \in C^{0,1}$ and if $\gbar \in C^{k,\alpha}$ with respect to both sets of coordinates.   Consequently $f$ satisfies a $C^{1,1}$ estimate.  Applying \eqref{eqn:ChristoffelChangeCoords} again with the improvement in regularity of the derivatives of $f$ allows us to conclude that the right hand side lies in $C^0$ and $f$ is consequently in $C^2$.  The rest of the argument follows by this bootstrap procedure and the fact that the product of $f$ and a $C^{0,\alpha}$ function remains $C^{0, \alpha}$ and that composition $\Gamtil^l_{ij} \circ f$ remains $C^{0, \alpha}$. 
\end{proof}

We also require an analogue of Lemma \ref{lemma:transfncreg} when the metrics enjoy only H\"older regularity.  Fortunately in this case we can use our decay estimates and Lemma \ref{lemma:holderreg} to improve the manifold regularity. 
\begin{lemma} \label{lemma:transfncreg2}
Suppose $f: (U \subset \bR^{n+1}, x^i) \rightarrow (\widetilde{U} \subset \bR^{n+1}, y^i)$ is a $C^{\infty}(U) \cap C^{0,1}(\overline{U})$ diffeomorphism between open sets of $\bR^{n+1}$.  Suppose the components of the metric are
\[ \gbar_{ij} = \gbar\left( \pd{}{x^i}, \pd{}{x^j} \right) \; \mbox{and} \; \gtil_{kl} = \gbar\left( \pd{}{y^k} , \pd{}{y^l} \right), \]
are $C^{0,\alpha}(\overline{U})$ and additionally $\partial \gbar_{ij} = O( (x^{n+1})^{a-1}), \partial \gtil_{kl} = O( (y^{n+1})^{a-1})$, for $0 < a < 1$.  Then $f \in C^{\infty}(U) \cap C^{1,\alpha}(\overline{U})$.
\end{lemma}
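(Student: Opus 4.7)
The plan is to mirror the bootstrap strategy of Lemma \ref{lemma:transfncreg} but work with pointwise decay estimates on $\partial \gbar$ and $\partial \gtil$ rather than with Hölder estimates on Christoffel symbols, and then feed the resulting decay of $\partial^2 f$ into Lemma \ref{lemma:holderreg}. First, since $f$ is a $C^{0,1}$ diffeomorphism of $\overline{U}$ onto $\overline{\widetilde U}$ arising as a transition function between two essential-subset atlases, the boundary face is preserved and the two boundary defining functions $\rho(x) = x^{n+1}$ and $\tilde\rho(y) = y^{n+1}$ are bi-Lipschitz equivalent under $f$: there exist positive constants $c,C$ with
\[ c\,\rho(x) \leq \tilde\rho(f(x)) \leq C\,\rho(x) \qquad \text{for all } x \in \overline U. \]

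Second, I translate the metric decay into decay on the Christoffel symbols in the interior. Since $\gbar$ is continuous up to the boundary and uniformly positive definite, the inverse $\gbar^{ij}$ is bounded on $\overline U$; combined with $\partial \gbar = O(\rho^{a-1})$ this yields the pointwise estimate $\Gambar^k_{ij}(x) = O(\rho(x)^{a-1})$ throughout $U$. The same argument applied to $\gtil$ in $\widetilde U$, together with the boundary comparability from the previous paragraph, gives $(\Gamtil^m_{pq} \circ f)(x) = O(\rho(x)^{a-1})$.

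Third, the interior change-of-variables formula \eqref{eqn:ChristoffelChangeCoords} expresses each second derivative $\pd{^2 y^m}{x^i \partial x^j}$ as a sum of products of first derivatives of $f$ (which are bounded since $f \in C^{0,1}$) and Christoffel symbols of either $\gbar$ or $\gtil \circ f$. Using the Christoffel decay from the preceding step, this immediately gives the pointwise interior estimate
\[ \pd{^2 y^m}{x^i \partial x^j}(x) = O(\rho(x)^{a-1}). \]

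Finally, apply Lemma \ref{lemma:holderreg}(1) componentwise to each $\partial f^m$: these are smooth on $U$ and all their first partials decay like $\rho^{a-1}$, so the lemma promotes them to $C^{0,a}(\overline U)$. Hence $f \in C^{1,a}(\overline U) \subset C^{1,\alpha}(\overline U)$, using that $\alpha \leq a$ is forced by $\gbar \in C^{0,\alpha}$ together with the decay hypothesis. The main obstacle is the comparability of the boundary defining functions under $f$; this is what allows decay measured in $\widetilde U$ coordinates to be transported back to $U$ coordinates so that the transformation law \eqref{eqn:ChristoffelChangeCoords} produces a single unambiguous decay rate for $\partial^2 f$. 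Everything else is routine substitution and one invocation of Lemma \ref{lemma:holderreg}.
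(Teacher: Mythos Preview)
Your argument is correct and follows essentially the same route as the paper: both proofs plug the Christoffel-symbol decay into the transformation law \eqref{eqn:ChristoffelChangeCoords}, obtain $\partial^2 f = O(\rho^{a-1})$, and then invoke Lemma~\ref{lemma:holderreg}. The only substantive difference is in how the $\widetilde U$-side decay is transported back to $U$: you state the two-sided bound $c\,\rho \le \tilde\rho\circ f \le C\,\rho$ up front (appealing to the geometric origin of $f$ as a transition map between essential-subset atlases), whereas the paper derives a one-sided bound directly from the Lipschitz estimate $|f^{n+1}(x)-f^{n+1}(x',0)|\le C\,x^{n+1}$. Since $a-1<0$, it is actually the lower bound $f^{n+1}\ge c\,x^{n+1}$ that is needed to control $(f^{n+1})^{a-1}$ from above, so your bi-Lipschitz formulation is the cleaner way to say it; the paper's stated inequality is in the wrong direction for that step and implicitly relies on the same comparability you make explicit.
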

\begin{proof}
Again the point of departure is the formula
\begin{equation*} 
\pd{^2 y^m}{x^i \partial x^j} = \pd{y^k}{x^i} \pd{y^l}{x^j} \Gambar^m_{kl} - \left( \Gamtil^l_{ij} \circ f \right) \pd{y^m}{x^l}. 
\end{equation*}
Since $f$ is Lipschitz, all factors like $\pd{y^k}{x^i} = O(1)$.  We observe that $\Gambar^m_{kl} = O((x^{n+1})^{a-1})$.  Note that $\Gamtil^l_{ij} \circ f = O((f^{n+1})^{a-1})$, where $y^{n+1} = f^{n+1}(x^1, \ldots, x^{n+1})$ is the $(n+1)$-component function of $f$.  Since $f \in C^{0,1}(\overline{U})$, we have
\[ \left|f^{n+1}(x^1, \ldots, x^{n+1}) - f^{n+1}(x^1, \ldots, x^{n}, 0)\right| \leq C x^{n+1}, \]
where $C$ is independent of $x^1, \ldots, x^{n}$.  Consequently, $\Gamtil^l_{ij} \circ f = O((x^{n+1})^{a-1})$ and
\[ \pd{^2 y^m}{x^i \partial x^j} = O((x^{n+1})^{a-1}). \]
Lemma \ref{lemma:holderreg} now implies that $\pd{y^k}{x^i}$ are $C^{0,\alpha}$ functions.
\end{proof}

We now come to the proof of Theorems \ref{thm:A-part1} and \ref{thm:A-part2}.

\begin{proof}[Proof of Theorem \ref{thm:A-part1}]
Given an essential subset and a reference covering by truncated cylinders (cf. page \pageref{pg:refcov}), Theorem \ref{thm:comparison} gives the required estimates so that we can apply Theorem 17 of \cite{BahuaudMarsh}.  We therefore obtain that $\Mbar = M \cup M(\infty)$ is endowed with a $C^{0,1}$ structure independent of essential subset.  Given any choice of essential subset, and any choice of Fermi coordinates in the reference covering the estimates of Proposition \ref{prop:firstiteration} and Lemma \ref{lemma:holderreg} imply that the components of compactified metric $\gbar$ are $C^{0,a}$ functions if $0 < a < 1$, $C^{0,b}$ functions for every $0 < b < 1$ when $a=1$ and $C^{0,1}$ if $a > 1$.  Consequently, $\gbar$ extends to the boundary with the stated regularity.  Note that the extension remains positive definite by the metric estimate from Theorem \ref{thm:comparison}.  Consequently $g$ is conformally compact with the stated regularity.

Whenever two truncated cylinders from distinct essential subsets overlap we have a smooth transition function that is $C^{0,1}$ up to the boundary.  Since the metric enjoys H\"older/Lipschitz estimates in each cylinder, we may apply Lemma \ref{lemma:transfncreg} or \ref{lemma:transfncreg2} to improve the regularity of the transition function by one order.
\end{proof}

\begin{proof}[Proof of Theorem \ref{thm:A-part2}]
Theorem \ref{thm:A-part1} already provides the initial estimates.  We apply Proposition \ref{prop:seconditeration} and Lemma \ref{lemma:holderreg} to obtain the improvement in metric regularity, and Lemma \ref{lemma:transfncreg} to obtain the improvement in manifold regularity.
\end{proof}

% Lemma for solutions of 2nd order linear equations with asymptotically constant coefficients with exponential growth

\subsection{Appendix: the Model systems}

In this appendix we analyze the model systems.

\subsubsection{General considerations}
The model system for $0 < a < 2$ is:
\begin{equation} \label{Model.System} 
\left\{ \begin{aligned}
u' & =  ce^{-ar} u +  ce^{(2-a)r} v +  ce^{\Omega r}, \\
v' & =  ce^{-2r} u + ce^{-ar} v + b c e^{\theta r},
\end{aligned}
\right.
\end{equation}
where $b$ and $c$ are positive constants.  We do all the calculations at once; one obtains the first model system by setting $b=0$ and the second by setting $b=1$.  The model system for $a=2$ is:
\begin{equation} 
\left\{ \begin{aligned}
u' & =  cre^{-2r} u +  c v +  ce^{\Omega r}, \\
v' & =  ce^{-2r} u + cre^{-2r} v + b c e^{\theta r},
\end{aligned}
\right.
\end{equation}

We first discuss the case of \eqref{Model.System}.  

We solve for a second order equation for $v$.  Note that
\[ v' - c e^{-ar} v - bc e^{\theta r}  = c e^{-2r} u, \]
so that
\[ u = c^{-1} e^{2r} v' - e^{(2-a)r} v - b e^{(2+\theta)r} .\]
Differentiating once we obtain
\[ u' = 2 c^{-1} e^{2r} v' + c^{-1} e^{2r}  v'' - (2-a) e^{(2-a)r} v - e^{(2-a)r} v' - b (2+\theta)e^{(2+\theta)r}. \]
Substitute $u$ and $u'$ into the first equation of \eqref{Model.System} to obtain
\begin{align*}
2 c^{-1} e^{2r} v' &+ c^{-1} e^{2r} v'' - (2-a) e^{(2-a)r} v - e^{(2-a)r} v' - b (2+\theta)e^{(2+\theta)r} \\
&= ce^{-ar} (c^{-1} e^{2r} v' - e^{(2-a)r} v - b e^{(2+\theta)r})  +  c e^{(2-a)r} v + c e^{\Omega r}.
\end{align*}
This yields a second order non-homogeneous linear equation for $v$:

\begin{align*} c^{-1} e^{2r} v'' &+ (2 c^{-1} e^{2r} - 2 e^{(2-a)r)} ) v' + (- (2-a) e^{(2-a)r} + c e^{(2-2a)r} - ce^{(2-a)r} ) v \\ &= c e^{\Omega r} + b ( (2+\theta)e^{(2+\theta)r} - c e^{(2+\theta -a)r}) , 
\end{align*}
or
\begin{align*} v'' &+ (2  - 2 c e^{-ar} ) v' + (- (2-a)c e^{-ar} + c^2 e^{-2ar} -c^2 e^{-ar} ) v \\ &= c^2 e^{(\Omega - 2)r} + b ((2+\theta) c e^{\theta r} - c^2 e^{(\theta -a)r} ). \end{align*}

We repeat our calculation to solve for a second order equation for $u$.  Note that when $0 < a < 2$
\[ u' - ce^{-ar} u - ce^{\Omega r} = c e^{(2-a)r} v, \]
so that
\[ v = c^{-1} e^{(-2+a) r} u' - e^{-2r} u - e^{(\Omega-2+a)r}. \]
Differentiating we obtain
\[ v' = (-2+a)c^{-1} e^{(-2+a)r} u' + c^{-1} e^{(-2+a)r} u'' + 2 e^{-2r} u - e^{-2r} u' - (\Omega-2+a) e^{(\Omega-2+a)r}. \]
Upon substitution into the second equation of the system we obtain
\begin{align*}
(-2+a)c^{-1} e^{(-2+a)r} u' &+ c^{-1} e^{(-2+a)r} u'' + 2 e^{-2r} u - e^{-2r} u' - (\Omega-2+a) e^{(\Omega-2+a)r} \\
&= c e^{-2r} u + c e^{-ar} (c^{-1} e^{-(2-a) r} u' - e^{-2r} u - e^{(\Omega-2+a)r}) + bc e^{\theta r} .
\end{align*}
The second order equation for $u$ is then
\begin{align*}  c^{-1} e^{(-2+a)r} u'' &+ ((-2+a)c^{-1} e^{(-2+a)r} - 2e^{-2r}) u' + ( 2 e^{-2r} -c e^{-2r} + c e^{(-2-a)r} ) u \\ &= (\Omega-2+a) e^{(\Omega-2+a)r} - c e^{(\Omega-2)r} + bc e^{\theta r},
\end{align*}
which becomes
\begin{align*}  u'' &+ ((-2+a) - 2c e^{-ar}) u' + ( 2 ce^{-ar} -c^2 e^{-ar} + c^2 e^{-2ar} ) u \\ &= c(\Omega-2+a) e^{\Omega r} - c^2 e^{(\Omega-a)r} + bc^2 e^{(\theta+2-a) r}.
\end{align*}

To summarize for $a \neq 2$ we have
\begin{equation} \label{eqn:uvaneq2}
\left\{ \begin{aligned}
u'' &+ ((-2+a) - 2c e^{-ar}) u' + ( 2 ce^{-ar} -c^2 e^{-ar} + c^2 e^{-2ar} ) u \\  &= c(\Omega-2+a) e^{\Omega r} - c^2 e^{(\Omega-a)r} + bc^2 e^{(\theta+2-a) r}.\\
v'' &+ (2  - 2 c e^{-ar} ) v' + (- (2-a)c e^{-ar} + c^2 e^{-2ar} -c^2 e^{-ar} ) v \\  &= c^2 e^{(\Omega - 2)r} + b ((2+\theta) c e^{\theta r} - c^2 e^{(\theta -a)r} ).
\end{aligned}
\right.
\end{equation}

When $a=2$, these calculations yield
\begin{equation} \label{eqn:uvaeq2}
\left\{ \begin{aligned}
&u'' - 2c r e^{-2r} u' + ((-c-c^2)e^{-2r} + 2c r e^{-2r}+ c^2 r^2 e^{-4r}) u =  c\Omega e^{\Omega r} - c^2 r e^{(\Omega-2)r} + bc^2 e^{\theta r}.
\\&v'' + (2 - 2c r e^{-2r})v' + ((-c-c^2) e^{-2r} + c^2 r^2 e^{-4r} ) v= c^2 e^{(\Omega - 2)r} + b ((2+\theta) c e^{\theta r} - c^2 r e^{(\theta -2)r} ).
\end{aligned}
\right.
\end{equation}

The following propositions are our basic analytical tool for estimating generic solutions of asymptotically constant coefficient second order linear equations.  We have not presented these results in full generality to keep the statement to a reasonable size.

\begin{prop} \label{prop:asympcc} Let $a > 0$, on $[r_0, \infty)$ consider the equation
\begin{equation} \label{eqn:asympcc}
 y'' + (c_1 + e^{-ar} b_1(r)) y' + (c_2 + e^{-ar} b_2(r)) y = e^{\omega r} b_3(r), 
\end{equation}
where $c_1$, $c_2$ are constants satisfying $c_1^2 - 4c_2 > 0$ and $b_i$ are bounded smooth functions of $r$ on $[r_0, \infty]$.  Suppose $\mu_1 < \mu_2$ are distinct real roots of the characteristic polynomial for this equation.
Then all solutions to \eqref{eqn:asympcc} satisfy the following estimate:
\begin{equation*} 
 y =
\left\{ \begin{aligned}
&O(e^{\max\{\mu_2, \omega\} r}),& \mu_2 \neq \omega \\
&O(r e^{\mu_2 r}),& \mu_2 = \omega
\end{aligned}
\right.
\end{equation*}
\end{prop}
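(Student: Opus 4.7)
The plan is to regard \eqref{eqn:asympcc} as a perturbation of the constant coefficient linear equation $y_0'' + c_1 y_0' + c_2 y_0 = e^{\omega r} b_3(r)$, whose characteristic polynomial has the distinct real roots $\mu_1 < \mu_2$. For this unperturbed problem the general solution is $A e^{\mu_1 r} + B e^{\mu_2 r} + y_{0,p}$, with $y_{0,p}$ obtained from variation of parameters, and a direct integration yields exactly the asserted asymptotics: the growth is controlled by the dominant exponent among $\mu_2$ and $\omega$, with the resonant case $\omega = \mu_2$ producing the extra factor of $r$ from $\int e^{0\cdot s}\, ds = s$. The remaining task is to show that these estimates persist under the $e^{-ar}$ perturbation.

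First I would construct a fundamental system for the homogeneous version of \eqref{eqn:asympcc}. Substituting $y = e^{\mu_i r} z_i$ and using $\mu_i^2 + c_1 \mu_i + c_2 = 0$ reduces the homogeneous equation to
\[ z_i'' + \bigl(2\mu_i + c_1 + e^{-ar} b_1(r)\bigr) z_i' + e^{-ar}\bigl(\mu_i b_1(r) + b_2(r)\bigr) z_i = 0. \]
Here the coefficient $2\mu_i + c_1 = \pm(\mu_2 - \mu_1)$ is a nonzero constant and the remaining coefficients are $O(e^{-ar})$, hence integrable on $[R,\infty)$. Rewriting this as a first order system in $(z_i, z_i')$ and applying a Picard iteration in a suitably weighted sup norm on $[R,\infty)$ for $R$ large, I would produce two linearly independent solutions satisfying $z_i(r) \to 1$ as $r \to \infty$; unwinding the substitution yields two linearly independent homogeneous solutions $y_1, y_2$ of \eqref{eqn:asympcc} with $y_i(r) = e^{\mu_i r}(1+o(1))$ and $y_i'(r) = \mu_i e^{\mu_i r}(1+o(1))$.

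Next I would apply variation of parameters: the Wronskian satisfies $W(r) = (\mu_2 - \mu_1) e^{(\mu_1+\mu_2)r}(1+o(1))$ and is nonvanishing for large $r$, so every solution of \eqref{eqn:asympcc} can be written $y = A(r) y_1(r) + B(r) y_2(r)$ with explicit formulas for $A'$ and $B'$ in terms of $y_1, y_2, W$ and the forcing $e^{\omega r} b_3(r)$. Integrating these and inserting the leading order asymptotics for $y_i$ and $W$ reduces everything to elementary integrals of the form $\int e^{(\omega - \mu_i)s}\, ds$, which produce exactly the stated growth; the exceptional $r e^{\mu_2 r}$ rate arises precisely when $\omega = \mu_2$, since then the integrand is $O(1)$.

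The main obstacle is the Picard construction of the fast-growing solution $y_2 \sim e^{\mu_2 r}$. One must integrate the associated integral equation from $+\infty$ with the prescribed boundary value $z_2(\infty) = 1$, rather than from a finite point, so that the iterate is not contaminated by an admixture of the slower-decaying solution; the choice of weight in the sup norm must exploit the sign $2\mu_2 + c_1 > 0$ together with the integrability of $e^{-ar}$ to make the fixed point operator a contraction. Once the fundamental system is in hand, the remaining steps---variation of parameters and integration of explicit exponentials---are routine.
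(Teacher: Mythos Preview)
Your proposal is correct and follows essentially the same approach as the paper: obtain a homogeneous fundamental system with $y_i(r) = (1+o(1))e^{\mu_i r}$, $y_i'(r) = (\mu_i+o(1))e^{\mu_i r}$, then apply variation of parameters and estimate the resulting integrals of $e^{(\omega-\mu_i)s}$. The only difference is that the paper invokes \cite[Theorem~1.9.1]{Eastham} for the existence and asymptotics of the fundamental system, whereas you sketch its construction directly via the substitution $y=e^{\mu_i r}z_i$ and a Picard iteration; after that, the Wronskian and particular-solution analysis are identical.
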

\begin{proof}
Since the coefficients of \eqref{eqn:asympcc} are asymptotically constant by \cite[Theorem 1.9.1]{Eastham} we find that two independent solutions to the associated homogeneous problem satisfy
\begin{align*}
y_1 &= (1 + o(1)) e^{\mu_1 r}, y_1' = (\mu_1 + o(1)) e^{\mu_1 r}, \mbox{and} \\
y_2 &= (1 + o(1)) e^{\mu_2 r}, y_2' = (t_2 + o(1)) e^{\mu_2 r}.
\end{align*}
We need to obtain estimates for solutions to the nonhomogeneous \eqref{eqn:asympcc}.  Recall that if $y_1, y_2$ are linearly independent solutions to an equation of the form
\[ y'' + p(r) y' + q(r) y = 0,\]
then a particular solution to the nonhomogeneous problem 
\[ y'' + p(r) y' + q(r) y = f(r),\]
is given by
\[ y_p = y_1 \int \frac{-y_2 \cdot f}{W(y_1, y_2)} + y_2 \int \frac{y_1 \cdot f}{W(y_1, y_2)}.\]
Consequently we estimate the absolute value of each of these integrals.  The Wronskian of the solutions above is asymptotic to 
\[ W(y_1, y_2) \sim (\mu_2 - \mu_1) e^{(\mu_1 + \mu_2)r}.\]
If $\omega \neq \mu_1, \mu_2$, then a simple estimation shows $y_p = O(e^{\omega r})$.  If $\omega = \mu_2$ then $y_p = O(r e^{\mu_2 r})$, whereas if $\omega = \mu_1$, $y_p$ is  $O(e^{\mu_2 r})$ by the ordering of the roots.

Consequently a generic solution $y$ to \eqref{eqn:asympcc} satisfies
\[ y = O(e^{\max\{\mu_2, \omega\} r}),  \]
when $\omega \neq \mu_2$ and when $\omega = \mu_2$
\[ y = O(r e^{\mu_2 r}).\]
\end{proof}

\begin{prop}  \label{prop:asympcc2} Let $a > 0$, on $[r_0, \infty]$ consider the equation
\begin{equation} \label{eqn:asympcc2}
 y'' + (e^{-ar} b_1(r)) y' + (e^{-ar} b_2(r)) y = e^{\omega r} b_3(r), 
\end{equation}
where $b_i$ are bounded smooth functions of $r$ on $[r_0, \infty]$.  
Then all solutions to \eqref{eqn:asympcc2} satisfy the following estimate:
\begin{equation*} 
 y = 
\left\{ \begin{aligned}
&O(r),& \omega < 0 \\
&O(r^2),& \omega = 0 \\
&O(e^{\omega r}),& \omega > 0
\end{aligned}
\right.
\end{equation*}
\end{prop}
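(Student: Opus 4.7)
The plan is to parallel the proof of Proposition~\ref{prop:asympcc}, but the equation is now a degenerate limiting case in which the ``frozen'' coefficients at infinity both vanish: the constant-coefficient limit is simply $y'' = 0$, whose characteristic polynomial has a repeated root at $0$. Eastham's theorem (and classical Levinson-type theorems) require distinct eigenvalues of the frozen matrix and therefore do not apply directly. A separate elementary construction of fundamental solutions of the homogeneous equation is needed.

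First I would exhibit fundamental solutions $y_1, y_2$ with sharp asymptotics $y_1 = 1 + O(e^{-ar})$, $y_1' = O(e^{-ar})$, $y_2 = r(1+O(e^{-ar}))$, $y_2' = 1 + O(e^{-ar})$. Writing $y_1 = 1 + \phi$ reduces the homogeneous problem to finding a pair $(\phi,\phi')$ tending to $0$ at infinity solving
\[ \phi'(r) = \int_r^\infty e^{-as}\left[ b_1(s)\phi'(s) + b_2(s)\phi(s) + b_2(s) \right]ds, \qquad \phi(r) = -\int_r^\infty \phi'(s)\, ds. \]
On $[R,\infty)$ for $R$ large, the exponential decay $e^{-as}$ together with $a>0$ makes the associated mapping a contraction on a small ball in $C_b([R,\infty))^2$, producing $\phi$. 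The second solution $y_2$ is then obtained by reduction of order, and Abel's identity gives $W(y_1,y_2) \to W_\infty \neq 0$. Variation of parameters yields a particular solution of the full equation
\[ y_p(r) = -y_1(r)\int_R^r \frac{y_2(s)\, e^{\omega s} b_3(s)}{W(s)}\, ds + y_2(r)\int_R^r \frac{y_1(s)\, e^{\omega s} b_3(s)}{W(s)}\, ds, \]
and a generic solution has the form $c_1 y_1 + c_2 y_2 + y_p$. The homogeneous part contributes at most $O(r)$, which is absorbed by all three claimed bounds.

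The cases $\omega < 0$ and $\omega = 0$ are immediate: absolute convergence respectively polynomial growth of the integrands give $y_p = O(r)$ and $y_p = O(r^2)$. The delicate case, and the main technical obstacle, is $\omega > 0$: individually, each of the two terms in $y_p$ is of size $r e^{\omega r}$, worse than the claimed $O(e^{\omega r})$. The required cancellation is exposed by the integration-by-parts identity $\int_R^r s e^{\omega s}\, ds = \omega^{-1} r e^{\omega r} - \omega^{-2} e^{\omega r} + O(1)$: the leading $-\omega^{-1} y_1(r)\cdot r e^{\omega r}/W_\infty$ contribution coming from the first term of $y_p$ cancels exactly against the leading $\omega^{-1} y_2(r) \cdot e^{\omega r}/W_\infty$ contribution coming from the second, leaving only an $O(e^{\omega r})$ remainder. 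For this cancellation to actually bound $y_p$ by $O(e^{\omega r})$, the sharpness of the $O(e^{-ar})$ corrections in the asymptotics of $y_1, y_2, W$ is essential, because the error pieces appear as factors of the form $r e^{(\omega - a)r}$; these are dominated by $e^{\omega r}$ precisely because $r e^{-ar}$ is bounded for $a > 0$. Making this bookkeeping rigorous is the only nontrivial step of the argument.
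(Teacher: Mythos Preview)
Your approach is correct and structurally the same as the paper's: obtain fundamental solutions with the right asymptotics, then apply variation of parameters. The one point of divergence is your assertion that Eastham's results do not apply in the repeated-root case. The paper does not reuse \cite[Theorem~1.9.1]{Eastham} here; it invokes \cite[Theorem~1.10.1]{Eastham}, a companion result specifically covering the degenerate situation with a double eigenvalue, and this directly supplies a fundamental system with $y_1 = 1 + o(1)$, $y_1' = o(r^{-1})$, $y_2 = r(1+o(1))$, $y_2' = 1 + o(1)$. Your fixed-point construction is a perfectly good substitute and in fact yields sharper $O(e^{-ar})$ remainders, which you then exploit in the $\omega>0$ case. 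The paper, by contrast, simply says ``repeat the same analysis as before'' and asserts the $O(e^{\omega r})$ bound; your explicit bookkeeping of the cancellation between the two variation-of-parameters integrals (or equivalently, the observation that the Green's kernel $G(r,s)=\bigl(y_1(s)y_2(r)-y_1(r)y_2(s)\bigr)/W(s)$ is $(r-s)+O(re^{-as})$, whence $\int_R^r |G(r,s)|e^{\omega s}\,ds = O(e^{\omega r})$) fills in a step the paper leaves to the reader. So: same route, but you build the fundamental system by hand rather than citing the appropriate Eastham theorem, and you are more explicit about the $\omega>0$ estimate.
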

\begin{proof}
Equation \eqref{eqn:asympcc2} has repeated characteristic roots, $\mu = 0$.  Since the coefficients of \eqref{eqn:asympcc2} are asymptotically constant by \cite[Theorem 1.10.1]{Eastham} we find that two independent solutions to the associated homogeneous problem satisfy
\begin{align*}
y_1 &= (1 + o(1)),  \; y_1' =  o(r^{-1}), \mbox{and} \\
y_2 &= (1 + o(1))r, \; y_2' = (1 + o(1)).
\end{align*}
We repeat the same analysis as before.  We find the Wronskian is asymptotically constant, and a generic solution to 
\eqref{eqn:asympcc2} satisfies
\begin{equation*} 
 y = 
\left\{ \begin{aligned}
&O(r),& \omega < 0 \\
&O(r^2),& \omega = 0 \\
&O(e^{\omega r}),& \omega > 0
\end{aligned}
\right.
\end{equation*}
\end{proof}

We now give the asymptotics for our geometric situations.  

\begin{prop} \label{prop:asypmodel1} Generic solutions of the systems
\begin{equation*} 
\left\{ \begin{aligned}
u' & =  ce^{-ar} u +  ce^{(2-a)r} v +  ce^{(3-a)r}, \\
v' & =  ce^{-2r} u + ce^{-ar} v,
\end{aligned}
\right.
\end{equation*}
for $0 < a< 2 $ and
\begin{equation*} 
\left\{ \begin{aligned}
u' & =  cre^{-2r} u +  c v +  ce^{\Omega r}, \\
v' & =  ce^{-2r} u + cre^{-2r} v,
\end{aligned}
\right.
\end{equation*}
for $a=2$ satisfy
\[ u = O(e^{(3-a)r}), \]
and
\begin{equation*} 
 v = 
\left\{ \begin{aligned}
&O(e^{(1-a)r}), & 0 < a < 1 \\
&O(r),& a = 1 \\
&O(1),& a > 1 
\end{aligned}
\right.
\end{equation*}
\end{prop}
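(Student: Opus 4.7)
My plan is to reduce each first-order $2\times 2$ system to a decoupled pair of second-order scalar ODEs and then invoke the two general asymptotic results Proposition \ref{prop:asympcc} and Proposition \ref{prop:asympcc2}, which have already been proved. The necessary reductions have in fact been carried out in equations \eqref{eqn:uvaneq2} and \eqref{eqn:uvaeq2}; so the core of the argument is to specialize those equations to the parameter values appearing in this proposition, namely $b=0$ and $\Omega = 3-a$ (no $\theta$ appears since $b=0$), read off the constant and decaying coefficients, and apply the appropriate one of the two propositions to each equation.

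For $0 < a < 2$ I would start with the $u$-equation from \eqref{eqn:uvaneq2}. Substituting $\Omega = 3-a$ the right-hand side becomes $c\,e^{(3-a)r} - c^{2} e^{(3-2a)r}$, and the leading (constant) part of the linear operator is $\mu^{2} + (a-2)\mu$, with roots $\mu_{1}=0 < \mu_{2}=2-a$. Since the discriminant $(a-2)^{2}$ is positive and $\omega = 3-a$ strictly exceeds $\mu_{2}$, Proposition \ref{prop:asympcc} yields $u = O(e^{(3-a)r})$ (the subdominant forcing $e^{(3-2a)r}$ gives a smaller estimate and is absorbed). For the $v$-equation in \eqref{eqn:uvaneq2} the right-hand side becomes $c^{2} e^{(1-a)r}$, and the leading linear operator has characteristic polynomial $\mu^{2}+2\mu$, with roots $\mu_{1}=-2 < \mu_{2}=0$. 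Now $\omega = 1-a$, and the three cases in the statement correspond exactly to the trichotomy in Proposition \ref{prop:asympcc}: $\omega > \mu_{2}$ when $0<a<1$ gives $v=O(e^{(1-a)r})$; $\omega = \mu_{2}$ when $a=1$ gives the logarithmic resonance $v=O(r)$; and $\omega < \mu_{2}$ when $1<a<2$ gives $v=O(1)$.

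For the $a=2$ system I would use \eqref{eqn:uvaeq2} with $b=0$ and $\Omega = 1$. The key observation is that now \emph{both} constant parts of the linear operator for $u$ vanish (every coefficient carries a decaying factor of the form $r\,e^{-2r}$ or $e^{-2r}$), so Proposition \ref{prop:asympcc2} is the right tool: with $\omega = 1 > 0$ the forcing $c\,e^{r}$ dominates the subdominant $c^{2}r\,e^{-r}$ term and gives $u = O(e^{r})=O(e^{(3-a)r})$. For the $v$-equation at $a=2$ the coefficient of $v'$ retains the constant $2$, so we are back in the setting of Proposition \ref{prop:asympcc} with roots $\mu_{1}=-2,\,\mu_{2}=0$ and $\omega=-1<\mu_{2}$, hence $v=O(1)$, matching the $a>1$ branch of the stated conclusion.

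The whole argument is essentially bookkeeping once the two abstract asymptotic propositions are in hand; the only place where some care is needed is in verifying that in every case the \emph{maximum} over the various forcing exponents is the one reported in the conclusion, and that the discriminant hypothesis $c_{1}^{2}-4c_{2}>0$ of Proposition \ref{prop:asympcc} actually holds (which is clear since $c_{2}=0$ throughout, making $c_{1}^{2}-4c_{2}=c_{1}^{2}\geq 0$, and strictly positive exactly when $c_{1}\neq 0$, which is the regime handled by Proposition \ref{prop:asympcc}; the borderline $c_{1}=0$ case at $a=2$ is precisely where one switches to Proposition \ref{prop:asympcc2}). No genuinely hard step is expected; the main pitfall is matching the parameters of the geometric systems to those of the abstract propositions without arithmetic slip-ups at the critical exponents $a=1$ and $a=2$.
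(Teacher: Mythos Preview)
Your proposal is correct and follows essentially the same approach as the paper: specialize the decoupled second-order equations \eqref{eqn:uvaneq2} and \eqref{eqn:uvaeq2} to $b=0$, $\Omega=3-a$, read off the characteristic roots ($\{0,2-a\}$ for $u$, $\{-2,0\}$ for $v$), and apply Proposition~\ref{prop:asympcc} (switching to Proposition~\ref{prop:asympcc2} for the degenerate $u$-equation at $a=2$). Your additional remarks on absorbing the subdominant forcing terms and verifying the discriminant hypothesis are correct and make explicit what the paper leaves implicit.
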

\begin{proof}
The system is just the model system with $b=0$ and $\Omega = 3-a$.  When $a \neq 2$ we find from equations \eqref{eqn:uvaneq2} that the roots of the characteristic polynomial are $\mu = 0, 2-a$ for $u$ and $\mu = 0, -2$ for $v$.  By Proposition \ref{prop:asympcc} we find that generic solutions to these equations satisfy
\[ u = O(e^{(3-a)r}), \]
and
\begin{equation*} 
 v = 
\left\{ \begin{aligned}
&O(e^{(1-a)r}), & 0 < a < 1 \\
&O(r),& a = 1 \\
&O(1),& a > 1 
\end{aligned}
\right.
\end{equation*}

When $a=2$, the asymptotics in $v$ are unchanged but the equation for $u$ degenerates and has a repeated root $\mu = 0$.  We find the solutions are still dominated by the nonhomogeneous part and satisfy
\[ u = O(e^{(3-a)r}) = O(e^{r}).\]
\end{proof}

\begin{prop} \label{prop:asypmodel2} Generic solutions of the system
\begin{equation*} 
\left\{ \begin{aligned}
u' & =  ce^{-ar} u +  ce^{(2-a)r} v +  ce^{(4-a)r}, \\
v' & =  ce^{-2r} u + ce^{-ar} v + c e^{(1-a)r},
\end{aligned}
\right.
\end{equation*}
for $1 < a < 2$ and the system
\begin{equation*} 
\left\{ \begin{aligned}
u' & =  cre^{-2r} u +  c  v +  ce^{2r}, \\
v' & =  ce^{-2r} u + cre^{-2r} v + c e^{-r},
\end{aligned}
\right.
\end{equation*}
for $a=2$ satisfy
\[ u = O(e^{(4-a)r}), \]
and
\begin{equation*} 
 v = 
\left\{ \begin{aligned}
&O(e^{(2-a)r}), & 1 < a < 2 \\
&O(r),& a = 2
\end{aligned}
\right.
\end{equation*}
\end{prop}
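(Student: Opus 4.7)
The plan is to mimic exactly the strategy used in Proposition \ref{prop:asypmodel1}: both systems here are specializations of the general model system \eqref{Model.System} (with $b=1$, $\Omega=4-a$, $\theta=1-a$), so the second-order equations \eqref{eqn:uvaneq2} and \eqref{eqn:uvaeq2} apply verbatim. After decoupling, I will read off the characteristic roots and identify the dominant forcing exponents, then invoke Proposition \ref{prop:asympcc} (for $1<a<2$) or Proposition \ref{prop:asympcc2} (for $a=2$).

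For $1<a<2$, the $u$--equation from \eqref{eqn:uvaneq2} has characteristic roots $\mu_1=0<\mu_2=2-a$ and a forcing that is a linear combination of $e^{(4-a)r}$, $e^{(4-2a)r}$, and $e^{(3-2a)r}$; since $a>1$ the dominant exponent is $\omega=4-a>2-a=\mu_2$, so Proposition \ref{prop:asympcc} yields $u=O(e^{(4-a)r})$. The $v$--equation has roots $\mu_1=-2<\mu_2=0$ and a forcing comprised of $e^{(2-a)r}$, $e^{(1-a)r}$, $e^{(1-2a)r}$; the dominant exponent is $\omega=2-a>0=\mu_2$, so $v=O(e^{(2-a)r})$.

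For $a=2$, I will use the equations \eqref{eqn:uvaeq2} instead. The $u$--equation degenerates to one with repeated characteristic root $\mu=0$, with forcing dominated by $e^{2r}$ (coming from $c\Omega e^{\Omega r}$ with $\Omega=2$), so Proposition \ref{prop:asympcc2} gives $u=O(e^{2r})=O(e^{(4-a)r})$. The delicate case is the $v$--equation: its characteristic roots are $\mu_1=-2<\mu_2=0$, and its forcing is $c^2 e^{(\Omega-2)r}+\cdots=c^2+O(e^{-r})$, so the dominant exponent is $\omega=0=\mu_2$. This is precisely the resonance case of Proposition \ref{prop:asympcc}, and produces the logarithmic-type loss $v=O(re^{\mu_2 r})=O(r)$.

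The only real obstacle is this resonance at $a=2$: one must verify that the constant part of the forcing in the $v$--equation really is present (so that the $O(r)$ bound cannot be improved to $O(1)$ under a ``generic'' hypothesis) and that the coefficient of $e^{(\theta-2)r}r$ and the $e^{(\Omega-2)r}$ term combine to give a genuine asymptotically constant nonhomogeneity. Once one checks that the forcing is of the form $e^{\omega r}b_3(r)$ with $b_3$ bounded and $\omega=0$, Proposition \ref{prop:asympcc} applies directly. Everything else is a matter of tracking exponents of $r$ and matching them against the ordering of the characteristic roots, exactly as in the proof of Proposition \ref{prop:asypmodel1}.
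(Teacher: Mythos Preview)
Your proposal is correct and follows exactly the paper's own approach: identify the system as the model system \eqref{Model.System} with $b=1$, $\Omega=4-a$, $\theta=1-a$, pass to the decoupled second-order equations \eqref{eqn:uvaneq2}/\eqref{eqn:uvaeq2}, read off the characteristic roots ($0,\,2{-}a$ for $u$ and $-2,\,0$ for $v$) and the dominant forcing exponents, and invoke Propositions \ref{prop:asympcc} and \ref{prop:asympcc2}. One minor remark: your concern about ``verifying the constant part of the forcing is really present'' at $a=2$ is unnecessary, since the statement only asserts an \emph{upper} bound and Proposition \ref{prop:asympcc} already delivers $v=O(r)$ in the resonance case $\omega=\mu_2=0$ regardless of whether that bound is sharp.
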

\begin{proof}
The system is the model system with $b=1$ and $\Omega = 4-a$ and $\theta = 1-a$.  When $a \neq 2$ we find from equations \eqref{eqn:uvaneq2} that the roots of the characteristic polynomial are $\mu = 0, 2-a$ for $u$ and $\mu = 0, -2$ for $v$.  By Proposition \ref{prop:asympcc} we find that generic solutions to these equations satisfy
\[ u = O(e^{(4-a)r}), \]
and
\begin{equation*} 
 v = 
\left\{ \begin{aligned}
&O(e^{(2-a)r}), & 1 < a < 2 \\
&O(r),& a = 2
\end{aligned}
\right.
\end{equation*}

When $a=2$, the asymptotics in $v$ are again unchanged and
\[ u = O(e^{(4-a)r}) = O(e^{2r}).\]
\end{proof}

\section{The Einstein case}
\label{section:einstein}

In this section we specialize to the case of an Einstein metric.  We begin by introducing harmonic charts and review a theorem that gives us adequate control of the metric given \eqref{AH0} for any rate of decay $a > 0$.  Then we derive an elliptic equation for the Weyl curvature tensor and use elliptic regularity to prove Theorem \ref{thm:B}.\\

\subsection{Harmonic charts and harmonic radius}

The next two sections are dedicated to regularity questions.  In order to use elliptic regularity we need charts in which the metric is controlled.  Such charts are provided by harmonic coordinates\footnote{These charts replace the usual M\"obius coordinates used for example in \cite{Lee}.}.  These charts provided useful in the context of Einstein manifold due to the fact that the Einstein equation is elliptic.  In particular, the metric is real-analytic in such a chart (see \cite{DTK}).  Note that harmonic mapping is an important tool when addressing diffeomorphism-invariant issues because it provides a natural choice of gauge, see e.g. \cite{DeTurckRicci} in the context of Ricci Flow and \cite{Lee} in the context of Einstein equations.  Given an arbitrary smooth $(n+1)$-manifold $M$, $x \in M$, $Q > 1$, $k \in \mathbb{N}$ and $\alpha \in (0, 1)$, the $\mathcal{C}^{k,\alpha}_Q$-\textit{harmonic radius} is the largest radius $r_H=r_H(Q,k,\alpha)(x)$ such that on the geodesic ball $B_x(r_H)$ centered at $x$ with radius $r_H$, there exist harmonic coordinates in which the metric is $\mathcal{C}^{k,\alpha}_Q$-controlled :

\begin{enumerate}
\item $Q^{-1} \delta_{ij} \leq g_{ij} \leq Q \delta_{ij}$
\item $\sum_{1 \leq |\beta| \leq k} r_H^{|\beta|} \sup_x \left| \partial^\beta g_{ij}(x) \right|
		 + \sum_{|\beta|=k} r_H^{k+\alpha}\sup_{y \neq z} \frac{\left|\partial^\beta g_{ij}(y) - \partial^\beta g_{ij}(z)\right|}{d_g(y, z)^\alpha}\leq Q-1$
\end{enumerate}

We recall the following theorem from \cite{HH}:

\begin{theorem}\label{HarmRadCtrl}

Given $k \in \mathbb{N}$, $\alpha \in (0, 1)$, $Q > 1$ and $\delta > 0$. Let $(M, g)$ a smooth (n+1)-manifold without boundary and $\Omega$ an open subset of $M$. Set $\Omega_\delta = \left\{ x \in M~\text{such that}~d_g(x, \Omega) < \delta\right\}$. Assume that there exist constants $C_j,~j=0, \ldots, k$ such that:
\begin{equation}
\label{HarmRicciControl}
\left| \nabla^{(j)} \ric (x) \right| \leq C_j\quad \text{for all~}x \in M~\text{and any}~j=0, \ldots, k
\end{equation}

Assume also that the injectivity radius is bounded from below on $\Omega_\delta$:

\begin{equation}
\label{HarmIngControl}
\exists~i > 0~\text{such that}~\mathrm{inj}_{(M, g)} (x) > i\quad \forall x \in \Omega_\delta
\end{equation}

There exists a positive constant $C = C\left(n, Q, k, \alpha, \delta, i, C_1, \ldots, C_k\right)$ such that:

\begin{equation}
\label{HarmRadControl}
r_H\left(Q, k+1, \alpha\right)(x) \geq C\quad\forall x \in \Omega
\end{equation}
\end{theorem}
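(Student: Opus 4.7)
The plan is to argue by contradiction via rescaling and Cheeger--Gromov compactness, in the spirit of M. Anderson's harmonic-coordinate regularity theorems. Suppose the conclusion fails: there exists a sequence of smooth $(n+1)$-manifolds $(M_\ell, g_\ell)$ satisfying \eqref{HarmRicciControl} and \eqref{HarmIngControl} with uniform constants $C_0, \ldots, C_k$ and $i$, and points $x_\ell \in \Omega_\ell$ such that
\begin{equation*}
    r_\ell := r_H(Q, k+1, \alpha)(x_\ell) \longrightarrow 0.
\end{equation*}
I would rescale to unit harmonic radius by setting $\tilde g_\ell := r_\ell^{-2} g_\ell$. Since the harmonic radius is a length, $r_H(\tilde g_\ell)(x_\ell) = 1$. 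Under the homothety, the iterated Ricci bounds improve as
\begin{equation*}
    |\nabla^{(j)} \ric|_{\tilde g_\ell} = r_\ell^{\,j+2} \, |\nabla^{(j)} \ric|_{g_\ell} \leq r_\ell^{\,j+2} C_j \longrightarrow 0,
\end{equation*}
for every $0 \leq j \leq k$, while the injectivity radius at $x_\ell$ diverges: $\mathrm{inj}_{\tilde g_\ell}(x_\ell) \geq r_\ell^{-1} i \to \infty$.

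Next I would extract a Cheeger--Gromov limit. By the very definition of $r_H$, each $\tilde g_\ell$ admits $\mathcal{C}^{k+1,\alpha}_Q$-controlled harmonic coordinates on the ball $B_{x_\ell}(1)$. Combined with the classical identity
\begin{equation*}
    -\tfrac{1}{2} g^{pq} \partial_p \partial_q g_{ij} + Q(g, \partial g) = \ric_{ij}
\end{equation*}
valid in any harmonic chart, and the iterated Ricci bounds, a standard elliptic bootstrap yields uniform $\mathcal{C}^{k+1,\alpha}$ control of the rescaled metric components. The exploding injectivity radius lets one patch such charts over larger and larger regions. After passing to a subsequence, $(M_\ell, \tilde g_\ell, x_\ell)$ converges in the pointed $\mathcal{C}^{k+1,\alpha'}$-topology (for every $\alpha' < \alpha$) to a smooth complete pointed Riemannian manifold $(M_\infty, g_\infty, x_\infty)$ which is Ricci-flat with $\nabla^{(j)} \ric \equiv 0$ for $0 \leq j \leq k$, has uniformly bounded curvature inherited from the $\mathcal{C}^{k+1,\alpha}$ control, and has infinite injectivity radius.

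The last step, and where the main obstacle lies, is to close the contradiction. Lower semi-continuity of the harmonic radius under $\mathcal{C}^{k+1,\alpha'}$-convergence first gives $r_H(g_\infty)(x_\infty) \geq 1$. The goal is to promote this to a strict inequality: on the limit manifold one should be able to construct $\mathcal{C}^{k+1,\alpha}_Q$-controlled harmonic coordinates on a ball $B_{x_\infty}(1+\delta)$ for some $\delta > 0$. A quantitative perturbation argument---solving $\Delta_{\tilde g_\ell} u = 0$ on $B_{x_\ell}(1+\delta/2)$ with boundary data close to the extended limit chart---then transports this chart back to $\tilde g_\ell$ with the same control for large $\ell$, forcing $r_H(\tilde g_\ell)(x_\ell) \geq 1 + \delta/2$ and contradicting the normalization $r_H(\tilde g_\ell)(x_\ell) = 1$. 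The hard part is the chart extension on $g_\infty$: one exploits that Ricci flatness turns the displayed harmonic identity into a quasilinear elliptic system with real-analytic solutions, and that the combination of bounded curvature with infinite injectivity radius removes any obstruction from conjugate points, so that the Jost--Karcher-type harmonic coordinate construction on $g_\infty$ can be pushed beyond the unit sphere with explicit $\mathcal{C}^{k+1,\alpha}_Q$ control. Making all the constants in this final continuation quantitative and uniform in the sequence is the technical heart of the proof.
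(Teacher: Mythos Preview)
The paper does not prove this theorem; it quotes it from \cite{HH} (Hebey--Herzlich) and immediately moves on to verifying the hypotheses in the ALH setting. So there is no ``paper's own proof'' to compare against---your sketch is an attempt to reconstruct the argument of the cited reference.

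That said, your outline is the correct strategy and is essentially how the result is proved in the literature (Anderson, Hebey--Herzlich): contradiction, rescale to unit harmonic radius, extract a Cheeger--Gromov limit using the blown-down Ricci bounds and blown-up injectivity radius, then show the limit has strictly larger harmonic radius and pull this back. One genuine gap worth flagging: as written, you only know $r_H(\tilde g_\ell)(x_\ell)=1$ at the single point $x_\ell$. To extract a $\mathcal{C}^{k+1,\alpha'}$ limit on regions of \emph{any} fixed size you need a uniform lower bound on $r_H(\tilde g_\ell)$ at \emph{nearby} points as well, and nothing in your setup guarantees this. The standard fix is a point-picking step: replace $x_\ell$ by a point that (almost) minimizes a scale-invariant ratio such as $r_H(y)/\min\{1, d_{g_\ell}(y,\partial\Omega_\delta)\}$ over $\Omega_\delta$, so that after rescaling the harmonic radius is $\geq \tfrac{1}{2}$ on balls of radius $R_\ell \to \infty$. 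Without this, the compactness step and the ``transport back'' step are not justified.
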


Note that hypothesis \eqref{HarmRicciControl} is trivially fulfilled in the case of Einstein manifold. Hypothesis \eqref{HarmIngControl} requires more work.  Set $\Omega = M \setminus K$, then $\Omega_\delta = \{r > - \delta\}$ where $r$ is to be understood as the signed distance to $\partial K$. If $\delta$ is small enough there is a diffeomorphism
$\Omega_{2\delta} \simeq (-2\delta, \infty) \times Y$ given by the normal exponential map, such that $\sec < 0$ on $\Omega_{2\delta}$ and the second fundamental form of the slices $\Sigma_r$ is positive definite. The exponential map with base point in $\Omega_\delta$ has no critical point at radius smaller than $\delta$ because of the negative curvature assumption \cite[Lemma 4.8.1]{Jost} so the injectivity radius on $\Omega_\delta$ is bounded from below if there is no closed geodesic with arbitrary small length. Even more is true:

\begin{lemma}
\label{LmClosedGeod}
There are no closed geodesics lying entirely in $\Omega_\delta$.
\end{lemma}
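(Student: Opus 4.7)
The plan is to derive a contradiction from the strict convexity of the level sets $\Sigma_r$ by examining the restriction of $r$ to a hypothetical closed geodesic at its maximum. The key tool is the standard Hessian identity for the signed distance function, after which the argument is a one-line application of the second derivative test.

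First, I would record the Hessian computation. Since $r$ is the signed distance to $Y$ on $\Omega_{2\delta}$, the gradient $\nabla r = \partial_r$ is a unit vector field and is parallel along its own integral curves, so $\mathrm{Hess}(r)(\partial_r,\,\cdot\,) \equiv 0$. For a vector $X$ tangent to a slice $\Sigma_s$, extend $X$ to a local vector field tangent to the family of slices; then $X(r) \equiv 0$ gives
\[
\mathrm{Hess}(r)(X,X) \;=\; -\,(\nabla_X X)(r) \;=\; -\,g(\nabla_X X, \partial_r) \;=\; \sff(X,X),
\]
using the paper's sign convention $\sff(X,Z) = g(\nabla_X Z, -\partial_r)$. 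Combined with the property already recorded in the surrounding text that $\sff$ is positive definite on every slice $\Sigma_s$ for $s \in (-2\delta, \infty)$, this shows that $\mathrm{Hess}(r) \geq 0$ on $\Omega_{2\delta}$, with kernel exactly $\mathrm{span}(\partial_r)$ at each point.

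Second, suppose for contradiction that $\gamma \colon S^1 \to \Omega_\delta$ is a closed geodesic. By compactness of $S^1$, the continuous function $r \circ \gamma$ attains its maximum at some point $p \in S^1$, with $r(p) > -\delta$. At $p$ we have $(r \circ \gamma)'(p) = g(\gamma'(p), \partial_r) = 0$, so the nonzero vector $\gamma'(p)$ is tangent to the slice $\Sigma_{r(p)}$. Since $\gamma$ is a geodesic, $\nabla_{\gamma'}\gamma' \equiv 0$, so
\[
(r \circ \gamma)''(p) \;=\; \mathrm{Hess}(r)(\gamma'(p), \gamma'(p)) \;=\; \sff(\gamma'(p), \gamma'(p)) \;>\; 0,
\]
where strict positivity uses that $\gamma'(p)$ is a nonzero vector tangent to $\Sigma_{r(p)}$ and $\sff$ is positive definite there. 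This contradicts the elementary fact that the second derivative of a smooth real-valued function on $S^1$ is nonpositive at a maximum.

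There is no real obstacle: the only nontrivial content is the Hessian identity $\mathrm{Hess}(r)|_{T\Sigma_r} = \sff$, which is immediate from the Fermi-coordinate setup, and the strict convexity of the slices, which is part of the setup preceding the lemma.
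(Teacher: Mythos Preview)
Your proof is correct and follows essentially the same approach as the paper: both use that $\mathrm{Hess}(r)$ restricted to tangent directions equals the positive-definite second fundamental form, and obtain a contradiction from the second derivative of $r\circ\gamma$ at a maximum. The paper phrases it slightly differently (first concluding $\gamma$ must lie in a single slice, then computing $0 = \langle \partial_r, \nabla_{\dot\gamma}\dot\gamma\rangle = -\sff(\dot\gamma,\dot\gamma) \neq 0$), but the content is identical to your direct second-derivative-test argument.
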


\begin{proof}
Let $\gamma: \mathbb{S}^1 \to \Omega_\delta$ be such a geodesic parametrized with constant speed. The function $r$ is convex on $\Omega_\delta$ because its Hessian is the second fundamental form so the image of $\gamma$ must lie in a slice $\Sigma_r$ because otherwise $r$ would reach a maximum on the image of $\gamma$. Now $\gamma$ satisfies the geodesic equation: $\nabla_{\dot{\gamma}} \dot{\gamma} = 0$ and in particular
$0=\left\langle N,\nabla_{\dot{\gamma}}\dot{\gamma}\right\rangle=-\sff\left(\dot{\gamma},\dot{\gamma}\right) \neq 0$ a contradiction.
\end{proof}

\rq This lemma can be easily extended to show that no geodesic segment $\gamma$ such that $\gamma(0) \in M \setminus \Omega_{2 \delta}$ can exit and reenter $M \setminus \Omega_{2 \delta}$.\\

This shows that hypothesis \eqref{HarmIngControl} is satisfied for an asymptotically hyperbolic manifold.  Thus for an asymptotically hyperbolic Einstein manifold, \eqref{HarmRadControl} is valid for any $k \geq 0$, $\alpha \in (0, 1)$, $Q > 1$ and any $x \in \Omega_0 = M \setminus K$.

\subsection{Asymptotic behaviour of the covariant derivatives of the Weyl tensor}

We now come to the second main result of this paper.

\begin{theorem}[Asymptotic behaviour of the covariant derivatives of the Weyl tensor]\label{WeylDerCtrl}
Assume $(M, g)$ is Einstein and satisfies \eqref{AH0} for some constant $a>0$.  There exist constants $C_j,~j=0, 1, \ldots $ such that
\begin{equation}
\left| \nabla^{(j)} \weyl \right|_g \leq C_j e^{-a r}
\end{equation}
\end{theorem}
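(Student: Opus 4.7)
Since $g$ is Einstein with $\ric_g=-ng$, the Weyl tensor equals $\riem+\riemcons$, so hypothesis \eqref{AH0} reads $|\weyl|_g=O(e^{-ar})$; the conclusion on $|\nabla^{(j)}\riem|_g$ for $j\geq 1$ in Theorem \ref{thm:B} follows from the analogous statement for $\weyl$ because $\riemcons$ is parallel. The plan is to combine a Lichnerowicz-type elliptic equation for $\weyl$ (available precisely because $g$ is Einstein) with the uniform control on the harmonic radius of $M\setminus K$ supplied by Theorem \ref{HarmRadCtrl}, and then to bootstrap using standard interior elliptic regularity.

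The first step is to derive the elliptic equation. For Einstein metrics the contracted second Bianchi identity gives $\nabla^{i}\weyl_{ijkl}=0$, and commuting two covariant derivatives while substituting $\ric_g=-ng$ at each appearance produces an identity of the schematic form
\[ \Delta \weyl \;=\; c_n\,\weyl \;+\; Q(\weyl,\weyl),\]
where $c_n$ is a purely dimensional constant and $Q$ is a quadratic bundle map with bounded coefficients. No derivative of curvature appears on the right, so this is a genuinely elliptic (semilinear) equation in $\weyl$ alone.

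Next I set up the geometric framework. Lemma \ref{LmClosedGeod} and the remark following it yield a uniform positive lower bound on the injectivity radius of $\Omega=M\setminus K$, while condition \eqref{HarmRicciControl} is automatic for Einstein metrics. Theorem \ref{HarmRadCtrl} then provides, for every $k,\alpha,Q$, a uniform positive lower bound $r_0$ on the $\mathcal{C}^{k,\alpha}_Q$-harmonic radius at every point of $\Omega$. For any $x\in\Omega$ with $r(x)$ large let $B_x$ be the harmonic coordinate ball of radius $r_0/2$ centered at $x$. On $B_x$ the metric components $g_{ij}$ and their derivatives are uniformly controlled, and $r$ varies by at most $r_0/2$, so hypothesis \eqref{AH0} gives $|\weyl|_{L^{\infty}(B_x)}\leq C_0\,e^{-ar(x)}$.

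The last step is the bootstrap. Written in harmonic coordinates, each component $w=\weyl_{ijkl}$ satisfies a uniformly elliptic linear equation
\[ g^{ab}\partial_a\partial_b w \;+\; (\text{first-order and zeroth-order terms}) \;=\; F, \]
with coefficients depending only on $g_{ij}$ and its derivatives, and with $|F|_{L^{\infty}(B_x)}\leq C(|\weyl|+|\weyl|^2)\leq C'\,e^{-ar(x)}$. An $L^p$ Calder\'on--Zygmund estimate for $p$ large, followed by Sobolev embedding, yields $|\nabla \weyl|_g(x)\leq C_1\,e^{-ar(x)}$ on a smaller concentric ball. Differentiating the elliptic equation gives an analogous equation for $\nabla \weyl$ whose source is a combination of $\weyl$, $\nabla\weyl$ and $\weyl\ast\nabla\weyl$ with uniformly controlled coefficients and still dominated by $e^{-ar(x)}$; interior Schauder estimates and induction on $j$ then complete the proof, after translating coordinate derivative bounds into covariant derivative bounds using the uniform control of the Christoffel symbols. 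The only point requiring care is that the quadratic nonlinearity $Q(\weyl,\weyl)$ should not degrade the decay rate at any stage of the induction, which is automatic because $|\weyl|^2=O(e^{-2ar})$ is dominated by the linear term $c_n\,\weyl$, so the rate $e^{-ar}$ is preserved at every order.
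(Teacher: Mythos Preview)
Your proposal is correct and follows essentially the same route as the paper: derive the Einstein Weyl equation $\Delta\weyl=-2n\,\weyl+2\tilde{\mathcal{Q}}(\weyl,\weyl)$, invoke Theorem \ref{HarmRadCtrl} together with Lemma \ref{LmClosedGeod} for a uniform harmonic radius on $M\setminus K$, and bootstrap via interior elliptic estimates in harmonic coordinates. The only cosmetic difference is that the paper applies a single $W^{k+2,p}$ interior estimate together with the Banach-algebra property of $W^{k,p}$ to handle $\tilde{\mathcal{Q}}$, whereas you phrase the bootstrap as differentiating the equation and iterating $L^p$/Schauder estimates; these are equivalent packagings of the same argument.
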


Before diving into the proof, we recall the formula for the Laplacian of the Riemann tensor  and show how it can be transformed into an equation for the Laplacian of the Weyl tensor.  Theorem \ref{WeylDerCtrl} is then obtained by applying elliptic regularity to this equation.\\

\begin{lemma}[Laplacian of the Weyl tensor]~
Let $(M, g)$ be an arbitrary Riemannian manifold. The Laplacian of the Riemann tensor is given by:
\begin{equation}
\label{LaplRiem}
\begin{array}{rcl}
\Delta \riemdddd{a}{b}{c}{d} & = &
	- \nabla_a\nabla_c\ricdd{b}{d} + \nabla_b\nabla_c\ricdd{a}{d} + \nabla_a\nabla_d\ricdd{b}{c} - \nabla_b\nabla_d\ricdd{a}{c}\\
	& & + \ricud{j}{a} \riemdddd{j}{b}{c}{d} - \ricud{j}{b} \riemdddd{j}{a}{c}{d}
			+ 2 \left(\tdddd{B}{a}{b}{c}{d} - \tdddd{B}{a}{b}{d}{c} + \tdddd{B}{a}{c}{b}{d} - \tdddd{B}{a}{d}{b}{c}\right),
\end{array}
\end{equation}
where $$\tdddd{B}{a}{b}{c}{d} = \riemudud{i}{a}{j}{b} \riemdddd{i}{c}{j}{d}.$$

If $(M, g)$ is Einstein, i.e., $\ric_g = -ng,$ the Laplacian of the Weyl tensor is given by:
\begin{equation}
\label{LaplWeilEins}
\Delta \weyldddd{\alpha}{\beta}{\gamma}{\delta}
= -2 n  \weyldddd{\alpha}{\beta}{\gamma}{\delta} + 2 \tdddd{\tilde{Q}}{\alpha}{\beta}{\gamma}{\delta},
\end{equation}
where
\begin{eqnarray*}
\tdddd{\tilde{Q}}{a}{b}{c}{d} & = &
		\tdddd{\tilde{B}}{a}{b}{c}{d}-\tdddd{\tilde{B}}{b}{a}{c}{d}
	+ \tdddd{\tilde{B}}{a}{c}{b}{d}-\tdddd{\tilde{B}}{b}{c}{a}{d},\\
\tdddd{\tilde{B}}{a}{b}{c}{d} & = & \weyludud{i}{a}{j}{b} \weyldddd{i}{c}{j}{d}.
\end{eqnarray*}
\end{lemma}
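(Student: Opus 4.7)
The lemma has two parts: a universal Weitzenb\"ock-type identity for $\Delta \riem$ on any Riemannian manifold, and a clean reformulation in the Einstein case in terms of $\weyl$. My plan is to derive \eqref{LaplRiem} from the second Bianchi identity together with the Ricci identity for commuting covariant derivatives, then impose $\ric_g = -n g$, and finally pass from $\riem$ to $\weyl$ using that $\riemcons$ is parallel and $\weyl$ is trace-free.

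\textbf{Deriving \eqref{LaplRiem}.} I start from the second Bianchi identity $\nabla_e \riemdddd{a}{b}{c}{d} + \nabla_a \riemdddd{b}{e}{c}{d} + \nabla_b \riemdddd{e}{a}{c}{d} = 0$, apply $\nabla^e$ and trace to obtain
$$\Delta\riemdddd{a}{b}{c}{d} = -\nabla^e\nabla_a\riemdddd{b}{e}{c}{d} - \nabla^e\nabla_b\riemdddd{e}{a}{c}{d}.$$
Commuting $\nabla^e$ past $\nabla_a$ (and past $\nabla_b$) via the Ricci identity produces commutator terms which, after use of the algebraic first Bianchi identity to regroup contractions, collect into the four quadratic expressions $\tdddd{B}{a}{b}{c}{d} - \tdddd{B}{a}{b}{d}{c} + \tdddd{B}{a}{c}{b}{d} - \tdddd{B}{a}{d}{b}{c}$ together with the Ricci--Riemann contractions $\ricud{j}{a}\riemdddd{j}{b}{c}{d} - \ricud{j}{b}\riemdddd{j}{a}{c}{d}$. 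The remaining once-commuted pieces $\nabla_a\nabla^e\riemdddd{b}{e}{c}{d}$ and $\nabla_b\nabla^e\riemdddd{e}{a}{c}{d}$ are reduced by the once-contracted second Bianchi identity $\nabla^e\riemdddd{e}{b}{c}{d} = \nabla_c\ricdd{b}{d} - \nabla_d\ricdd{b}{c}$ to yield the four Hessian-of-Ricci terms in \eqref{LaplRiem}.

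\textbf{Einstein specialization and passage to $\weyl$.} Under $\ric_g = -ng$ the Ricci tensor is covariantly constant, so all four Hessian-of-Ricci terms vanish identically. Since $\ricud{j}{a} = -n\,\delta^j_a$, antisymmetry of $\riem$ in its first pair of indices collapses the Ricci--Riemann contribution to $-2n\,\riemdddd{a}{b}{c}{d}$. Thus \eqref{LaplRiem} reduces to $\Delta\riem = -2n\,\riem + 2 Q_\riem$ where $Q_\riem$ is the four-term combination of $B$'s from \eqref{LaplRiem}. Because $\riemcons$ is built algebraically from $g$, it is parallel, so $\Delta\weyl = \Delta\riem$; substituting $\riem = \weyl - \riemcons$ gives $\Delta\weyl = -2n\,\weyl + 2n\,\riemcons + 2Q_\riem$. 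Expanding each $B$-term via $\riem = \weyl - \riemcons$ produces the $\tilde B$-terms of the statement, plus cross terms and pure $\riemcons*\riemcons$ contributions. Each cross term contracts $\weyl$ with factors of $g$ coming from $\riemcons$, so it reduces to a trace of $\weyl$ and vanishes. The $\riemcons*\riemcons$ contributions reduce to a universal multiple of $\riemcons$ which, after the four-term antisymmetrization defining $\tilde Q$, exactly cancels the $+2n\,\riemcons$ offset, producing $\Delta\weyl = -2n\,\weyl + 2\tilde Q$ with $\tilde Q$ in the claimed form.

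\textbf{Main obstacle.} The delicate step is the last algebraic reduction: carefully tracking index positions when expanding each $B$-term via $\riem = \weyl - \riemcons$, verifying that every $\weyl$--$\riemcons$ cross term is indeed a trace of $\weyl$ and hence zero, and confirming that the residual $\riemcons*\riemcons$ contributions combine with the four-term antisymmetrization so as to precisely cancel the $+2n\,\riemcons$ offset (and match the specific antisymmetrization pattern defining $\tilde Q$, which differs cosmetically from that of $Q_\riem$ but agrees with it under the full set of Riemann symmetries). This is pure tensor algebra leaning on the algebraic symmetries of $\riem$ (antisymmetries in each pair, pair-swap, first Bianchi) together with trace-freeness of $\weyl$; a convenient bookkeeping device is to write $\riemcons$ as a Kulkarni--Nomizu product of $g$ with itself and invoke the standard identities describing how such products interact with trace-free tensors.
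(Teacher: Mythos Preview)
Your overall strategy matches the paper's proof: second Bianchi plus Ricci commutation for \eqref{LaplRiem}, then in the Einstein case substitute $\riem = \weyl - \riemcons$ and show the non-$\tilde B$ debris cancels against $+2n\,\riemcons$. The derivation of \eqref{LaplRiem} is fine, as is your observation that the two antisymmetrization patterns for $Q$ and $\tilde Q$ coincide via the pair-swap symmetry $B_{abcd}=B_{cdab}$.

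There is, however, a genuine error in your treatment of the cross terms. You assert that each $\weyl*\riemcons$ cross term ``reduces to a trace of $\weyl$ and vanishes.'' This is false. Expanding $\tdddd{B}{a}{b}{c}{d} = \riemudud{i}{a}{j}{b}\riemdddd{i}{c}{j}{d}$ with $\riem=\weyl-\riemcons$, the cross contributions produce terms such as $\weyludud{i}{a}{j}{b}\,g_{id}g_{cj}=\weyldddd{d}{a}{c}{b}$, which is a full component of $\weyl$, not a trace. The paper's explicit computation gives
\[
\tdddd{B}{a}{b}{c}{d}=\tdddd{\tilde B}{a}{b}{c}{d}-2\,\weyldddd{a}{d}{b}{c}+(n-1)\,g_{ab}g_{cd}+g_{ac}g_{db},
\]
so a nonzero $\weyl$-piece survives in each individual $B$. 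The correct mechanism is not trace-freeness but \emph{pairwise cancellation under the four-term combination}: grouping $\tdddd{B}{a}{b}{c}{d}+\tdddd{B}{a}{c}{b}{d}$ one finds the cross contribution $-2(\weyldddd{a}{d}{b}{c}+\weyldddd{a}{d}{c}{b})=0$ by antisymmetry of $\weyl$ in its last pair, and similarly for the other grouping. Only after this cancellation does one obtain $Q=\tilde Q + n(g_{ac}g_{db}-g_{ad}g_{cb})$, which then absorbs the $+2n\,\riemcons$ offset. So your conclusion is right, but the stated reason is not; replace ``trace of $\weyl$'' with the antisymmetry argument and the proof goes through exactly as the paper's does.
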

\begin{proof} The first formula is standard (see e.g. \cite{ChowKnopf}).  We include its proof here for completeness.  The calculations are based on the second Bianchi identity:

$$\nabla_j \riemdddd{a}{b}{c}{d} + \nabla_a \riemdddd{b}{j}{c}{d} + \nabla_b \riemdddd{j}{a}{c}{d} = 0.$$

\begin{eqnarray*}
\Delta \riemdddd{a}{b}{c}{d}
		& = & g^{ij} \nabla_i \nabla_j \riemdddd{a}{b}{c}{d}\\
		& = & - g^{ij} \nabla_i \left( \nabla_a \riemdddd{b}{j}{c}{d} + \nabla_b \riemdddd{j}{a}{c}{d} \right)\\
		& = &   g^{ij} \nabla_i \left( \nabla_b \riemdddd{a}{j}{c}{d} - \nabla_a \riemdddd{b}{j}{c}{d} \right).
\end{eqnarray*}

We focus on the first term, the second one can be obtained from it by switching $a$ and $b$.

\begin{eqnarray*}
g^{ij} \nabla_i \nabla_b \riemdddd{a}{j}{c}{d}
	& = & g^{ij} \nabla_b \left( \nabla_i \riemdddd{a}{j}{c}{d} \right)\\
	& & - g^{ij}\left(\riemdddu{i}{b}{a}{k} \riemdddd{k}{j}{c}{d} + \riemdddu{i}{b}{j}{k} \riemdddd{a}{k}{c}{d}
			+ \riemdddu{i}{b}{c}{k} \riemdddd{a}{j}{k}{d} + \riemdddu{i}{b}{d}{k} \riemdddd{a}{j}{c}{k}\right)\\
	&=& - g^{ij} \nabla_b \left( \nabla_c \riemdddd{a}{j}{d}{i} + \nabla_d \riemdddd{a}{j}{i}{c} \right)
			- g^{ij} \riemddud{i}{b}{k}{a} \left(\riemdddd{k}{c}{d}{j} + \riemdddd{k}{d}{j}{c}\right)\\
	& & + \ricdu{b}{k} \riemdddd{a}{k}{c}{d}
			- \riemudud{i}{b}{k}{c} \riemdddd{i}{a}{k}{d} +\riemudud{i}{b}{k}{d} \riemdddd{a}{j}{c}{k}\\
	& = & \nabla_b \nabla_c \ricdd{a}{d} - \nabla_b \nabla_d \ricdd{a}{c} + \ricud{k}{b} \riemdddd{a}{k}{c}{d}\\
	& & + \tdddd{B}{a}{b}{c}{d} - \tdddd{B}{b}{a}{c}{d} - \tdddd{B}{a}{d}{b}{c} + \tdddd{B}{a}{c}{b}{d}.
\end{eqnarray*}
Formula \eqref{LaplRiem} now follows.\\

When $g$ is Einstein,
\begin{equation}
\riemdddd{a}{b}{c}{d} = \weyldddd{a}{b}{c}{d} - \left(g_{ad}g_{bc} - g_{ac}g_{bd}\right),
\end{equation}
so equation \eqref{LaplRiem} becomes:

$$
\Delta \riemdddd{a}{b}{c}{d} =
	-	2 n \riemdddd{a}{b}{c}{d}
	+ 2 \left(\tdddd{B}{a}{b}{c}{d} - \tdddd{B}{a}{b}{d}{c} + \tdddd{B}{a}{c}{b}{d} - \tdddd{B}{a}{d}{b}{c}\right).$$

We compute the tensor $\mathcal{B}$:

\begin{eqnarray*}
\tdddd{B}{a}{b}{c}{d}
	& = & \riemudud{i}{a}{j}{b} \left[ \weyldddd{i}{c}{j}{d} -\left(g_{id}g_{cj} - g_{ij} g_{cd}\right)\right]\\
	& = & \riemudud{i}{a}{j}{b} \weyldddd{i}{c}{j}{²d}
			- \left(\riemdddd{d}{a}{c}{b} + \ricdd{a}{b} g_{cd}\right)\\
	& = & \weyludud{i}{a}{j}{b} \weyldddd{i}{c}{j}{d}
			- \left(  \kronecker{i}{b} \kronecker{j}{a} - g^{ij} g_{ab}\right) \weyldddd{i}{c}{j}{d}
			+ n g_{ab} g_{cd} - \riemdddd{a}{d}{b}{c}\\
	& = & \tdddd{\tilde{B}}{a}{b}{c}{d} - \weyldddd{b}{c}{a}{d}	+ n g_{ab} g_{cd}
			 - \left[\weyldddd{a}{d}{b}{c} - \left(g_{ac} g_{db} - g_{ab} g_{dc}\right) \right]\\
	& = & \tdddd{\tilde{B}}{a}{b}{c}{d} - 2  \weyldddd{a}{d}{b}{c}	+ n g_{ab} g_{cd}
			 + \left(g_{ac} g_{db} - g_{ab} g_{dc}\right)\\
	& = & \tdddd{\tilde{B}}{a}{b}{c}{d} - 2 \weyldddd{a}{d}{b}{c}	+  (n-1) g_{ab} g_{cd}
			 +  g_{ac} g_{db}.\\
\end{eqnarray*}
Thus

\begin{eqnarray*}
\tdddd{B}{a}{b}{c}{d} + \tdddd{B}{a}{c}{b}{d}
&	= & \tdddd{\tilde{B}}{a}{b}{c}{d} + \tdddd{\tilde{B}}{a}{c}{b}{d}
	- 2  \left(\weyldddd{a}{d}{b}{c} + \weyldddd{a}{d}{c}{b}\right)
	+ n  \left( g_{ab} g_{cd} + g_{ac} g_{db}\right)\\
& = & \tdddd{\tilde{B}}{a}{b}{c}{d} + \tdddd{\tilde{B}}{a}{c}{b}{d}
	+ n  \left( g_{ab} g_{cd} + g_{ac} g_{db}\right),
\end{eqnarray*}
so the quadratic term in \eqref{LaplRiem} can be written

\begin{eqnarray*}
\tdddd{Q}{a}{b}{c}{d}
	& = & \tdddd{B}{a}{b}{c}{d} + \tdddd{B}{a}{c}{b}{d} - \tdddd{B}{a}{b}{d}{c} - \tdddd{B}{a}{d}{b}{c}\\
	& = & \tdddd{\tilde{Q}}{a}{b}{c}{d}	+ n \left(g_{ac} g_{db} - g_{ad} g_{cb}\right).\\
\end{eqnarray*}
Upon plugging this expression into the expression of the Laplacian of the Riemann tensor, we obtain:

\begin{eqnarray*}
\Delta \weyldddd{a}{b}{c}{d}
		& = & \Delta \riemdddd{a}{b}{c}{d}\\
		& = & 2 \left(\tdddd{B}{a}{b}{c}{d} - \tdddd{B}{a}{b}{d}{c} + \tdddd{B}{a}{c}{b}{d} - \tdddd{B}{a}{d}{b}{c}\right)
				- 2 n  \riemdddd{a}{b}{c}{d}\\
		& = & 2 \tdddd{\tilde{Q}}{a}{b}{c}{d} + 2n  \left(g_{ac} g_{bd} - g_{ad} g_{bc}\right)
				- 2 n  \riemdddd{a}{b}{c}{d}\\
		& = & 2 \tdddd{\tilde{Q}}{a}{b}{c}{d} - 2 n  \weyldddd{a}{b}{c}{d}.
\end{eqnarray*}
\end{proof}

\begin{proof}[Proof of Theorem \ref{WeylDerCtrl}]

Select $Q > 1$, $\alpha \in (0,~1)$, and $p > n+1$ (the values of these constants do not influence the proof), set $r_H=r_H(Q,k+2,\alpha)$ the $(Q,k+2,\alpha)$-harmonic radius of $(M, g)$.  By the discussion following Theorem \ref{HarmRadCtrl}, we have that $r_H\geq C(n, Q, k, \alpha)>0$. Let $x \in \Omega$, choose harmonic coordinates $z^i$ on $B_x(r_H)$ such that $g$ is $\mathcal{C}^{k+2,\alpha}_Q$-controlled.  In this chart, equation \eqref{LaplWeilEins} can be written:

$$ g^{ij} \partial_i \partial_j \weyl + \Gamma * \partial \weyl + \partial\Gamma * \weyl + \Gamma \Gamma * \weyl + 2n \weyl = \tilde{\mathcal{Q}}.$$

The $\Gamma$ terms are easily estimated: $\Gamma^k_{ij} = \frac{1}{2} g^{kl} \left(\partial_i g_{jl} + \partial_j g_{il} - \partial_l g_{ij}\right)$.  So, in the chart $z^i$, $\|\Gamma\|_{k+1, \alpha} \leq f_k \left(\|g_{ij}\|_{k+2, \alpha}\right)$ where $f_k$ is a polynomial function depending only on $k$.  Applying the interior Schauder estimate we obtain

$$
\left\|\weyl\right\|_{W^{k+2, p}\left(B\left(\frac{r_H}{2}\right)\right)}
\leq C_{Q, k, p} \left( \left\|\tilde{\mathcal{Q}}\right\|_{W^{k, p}\left(B\left(r_H\right)\right)}
+ \left\|\weyl\right\|_{L^p\left(B\left(r_H\right)\right)}\right).
$$

With our assumptions, $W^{k, p}\left(B\left(r_H\right)\right)$ is a Banach algebra (\cite[Theorem 5.23]{Adams}) so\footnote{Note that, because of contractions in the expression of $Q$, this estimate relies once more on the fact that the metric is $\mathcal{C}^{k+2,\alpha}_Q$-controlled}:

$$
\left\|\weyl\right\|_{W^{k+2, p}\left(B\left(\frac{r_H}{2}\right)\right)}
\leq \tilde{C}_{Q, k, p} \left( \left\|\weyl\right\|^2_{W^{k, p}\left(B\left(r_H\right)\right)}
+ \left\|\weyl\right\|_{L^p\left(B\left(r_H\right)\right)}\right).
$$

A simple induction argument over $k$ gives the following estimate: assume that $\left\|\weyl\right\|_{L^p\left(B\left(\frac{r_H}{2}\right)\right)} \leq C_0 e^{-a r_0}$ then there exists a constant $C_{k+2}$ such that $\left\|\weyl\right\|_{W^{k+2,p}\left(B\left(\frac{r_H}{2^{\lceil\frac{k}{2}\rceil}}\right)\right)}\leq C_{k+2} e^{-a r_0}$.  The Sobolev embedding Theorem leads to the estimate $\left\|\weyl\right\|_{\mathcal{C}^{k+1}\left(B\left(\frac{r_H}{2}\right)\right)} \leq C_{k+2} e^{-a r_0}$. The $\mathcal{C}^{k+1,\alpha}$-control on the Christoffel symbols allows us to replace the derivatives by covariant derivatives:

$$\sum_{j=0}^{k+1}\sup_{y\in B_x\left(\frac{r_H}{2}\right)}\left|\nabla^{(j)}\weyl(y)\right|\leq\tilde{C}_{k+2}e^{-a r_0}.$$

In particular for $z = x$:

$$\sum_{j=0}^{k+1} \left| \nabla^{(j)} \weyl (x) \right| \leq \tilde{C}_{k+2} e^{-a r}.$$
\end{proof}

\bibliographystyle{amsplain}
\bibliography{ahe}
\end{document}